\theoremstyle{plain}
\newtheorem{theorem}{Theorem}[section]
\newtheorem{lemma}[theorem]{Lemma}
\newtheorem{corollary}[theorem]{Corollary}
\newtheorem*{thm:progress}{\Cref{th: BF_contract}}
\newtheorem*{thm:main}{\Cref{thm:contraction_impies_progress}}
\newtheorem*{thm:orbits}{\Cref{th:contracting_orbit}}
\newtheorem*{lem:usually_nondegen}{\Cref{lem:usually_nondegenerate}}
\newtheorem{definition}[theorem]{Definition}
\newtheorem{proposition}[theorem]{Proposition}
\newtheorem*{claim}{Claim}
\theoremstyle{definition}
\newtheorem{remark}[theorem]{Remark}
\newcommand{\define}[1]{\textbf{#1}}
\newcommand{\R}{\mathbb{R}}
\newcommand{\Z}{\mathbb{Z}}
\DeclareMathOperator{\diam}{diam}
\newcommand{\ceil}[1]{\left\lceil#1\right\rceil}
\newcommand{\abs}[1]{\left\vert#1\right\vert}
\newcommand{\inv}{^{-1}}
\DeclareMathOperator{\Teich}{Teich} 
\newcommand{\T}{\Teich}
\newcommand{\C}{{\EuScript C}} 
\DeclareMathOperator{\Out}{Out}
\DeclareMathOperator{\rank}{rk}
\newcommand{\free}{\mathbb{F}} 
\newcommand{\factor}{{\EuScript F}} 
\newcommand{\F}{\factor} 
\renewcommand{\int}{\mathcal{I}}
\newcommand{\fc}{\factor} 
\newcommand{\pl}{{\EuScript{PL}}} 
\newcommand{\os}{{\EuScript X}} 
\newcommand{\X}{\os} 
\newcommand{\dsym}{d^{\mathrm{sym}}_\os} 
\newcommand{\fproj}{\pi_\fc} 
\newcommand{\plproj}{\pi_\pl} 
\newcommand{\bfproj}{\mathrm{Pr}} 
\newcommand{\len}{\ell}  
\newcommand{\cp}{\pi} 
\newcommand{\minlen}{m}  
\newcommand{\minpts}{\rho} 
\newcommand{\mintime}{\hat{\minpts}} 
\newcommand{\rose}{{\EuScript R}} 
\DeclareMathOperator{\vol}{vol} 
\newcommand{\sym}{{\sf M}} 
\newcommand{\lipconst}{{\sf L}} 
\renewcommand{\diam}{\mathrm{diam}}
\newcommand{\I}{\mathbf{I}}
\newcommand{\Ipl}{\I_+}
\newcommand{\Imin}{\I_-}
\newcommand{\J}{\mathbf{J}}
\begin{document}

\title{\textbf{\Large Contracting orbits in Outer space}}
\author{Spencer Dowdall and Samuel J. Taylor}
\date{\today}

\maketitle

\begin{abstract}
We show that strongly contracting geodesics in Outer space project to parameterized quasigeodesics in the free factor complex. This result provides a converse to a theorem of Bestvina--Feighn, and is used to give conditions for when a subgroup of $\Out(\free)$ has a quasi-isometric orbit map into the free factor complex. It also allows one to construct many new examples of strongly contracting geodesics in Outer space.
\end{abstract}


\section{Introduction}

A geodesic $\gamma \colon \I \to X$ in a metric space $X$ is strongly contracting if the closest point projection to $\gamma$ contracts far away metric balls in $X$ to sets of uniformly bounded diameter. Such geodesics exhibit hyperbolic-like behavior and are thus important to understanding the structure of the space.
This paper further develops the theory of strongly contracting geodesics in Outer space with the aim of understanding their behavior under the projection to the free factor complex. See \S\ref{sec: strong_contract} for precise definitions.  

Such geometric questions in Outer space are often motivated by their analogs in Teichm\"uller space. In that setting, strongly contracting geodesics play an important role in our understanding of the geometry of Teichm\"uller space and the mapping class groups. These geodesics are characterized by the following result of Minsky describing both their structure in Teichm\"uller space and their behavior in the curve complex. (The equivalence of $1.$ and $2.$ in \Cref{th: MM} is the main result of \cite{Minsky96} while the equivalence of $1.$ and $3.$ follows easily from Theorem $4.3$ of \cite{Minsky96}.)

\begin{theorem}[Minsky \cite{Minsky96}] \label{th: MM}
Let $\tau: \I \to \T(S)$ be a Teichm\"{u}ller geodesic. Then the following are equivalent:
\begin{enumerate}
\item There is an $\epsilon > 0$ such that $\tau$ is entirely contained in the $\epsilon$--thick part of $\T(S)$.
\item There is a $D > 0$ such that $\tau$ is a $D$--strongly contracting geodesic in $\T(S)$.
\item There is a $K \ge 1$ such that $\tau$ projects to a $K$--quasigeodesic in $\C(S)$, the curve complex of $S$.
\end{enumerate}
Moreover, the constants $\epsilon,D,K$ above depend only on each other and the topology of $S$.
\end{theorem}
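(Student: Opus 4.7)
The plan is to establish the equivalences in two stages. First I would prove $(1) \Leftrightarrow (3)$, which follows from relatively direct short-curve analysis, and then tackle the harder $(1) \Leftrightarrow (2)$, which is the main theorem of \cite{Minsky96} and requires detailed analysis of the defining quadratic differential.

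For $(1) \Rightarrow (3)$, I would use that the systole map $\cp \colon \T(S) \to \C(S)$ is coarsely Lipschitz and coarsely surjective (Masur--Minsky). Thickness of $\tau$ rules out persistent short curves, so $\cp(\tau(t))$ never lingers near any fixed vertex of $\C(S)$; combined with extremal-length bounds along $\tau$, this shows $\cp \circ \tau$ makes definite $\C(S)$-progress per unit time, yielding the quasigeodesic property. For the contrapositive $(3) \Rightarrow (1)$, suppose $\ell_{\tau(t_0)}(\alpha) < \delta$ for some curve $\alpha$. Using log-convexity of extremal length along Teichm\"uller geodesics (via the Hubbard--Masur correspondence with the horizontal and vertical foliations of $q$), the interval on which $\ell_{\tau(t)}(\alpha)$ stays below a fixed threshold has length at least $c\log(1/\delta)$. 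Throughout this interval $\cp(\tau(t))$ lies within bounded $\C(S)$-distance of $\alpha$, forcing $\cp \circ \tau$ to stall and contradicting the $K$-quasigeodesic hypothesis once $\delta$ is small enough in terms of $K$.

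The heart of the theorem is the equivalence $(1) \Leftrightarrow (2)$. For $(1) \Rightarrow (2)$, I work with the quadratic differential $q$ defining $\tau$; the $\epsilon$-thick hypothesis guarantees the flat surfaces $(X_t, q_t)$ have uniformly bounded geometry. Given $x \in \T(S)$ with $d(x,\tau)$ large and $y$ in a ball around $x$ of radius $r \ll d(x,\tau)$, I need to bound $d_\T(\cp(x),\cp(y))$ by a constant depending only on $\epsilon$ and the topology of $S$. Using Kerckhoff's formula $d_\T(X,Y) = \tfrac{1}{2}\sup_\gamma \log\bigl(\ell_Y(\gamma)/\ell_X(\gamma)\bigr)$, this reduces to controlling how much the extremal length of any candidate ``witness'' curve can differ on $\cp(x)$ versus $\cp(y)$. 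The crucial input is that on a thick flat surface, a curve whose length is minimized at time $t$ has its horizontal and vertical $q$-weights balanced to within bounded ratio at $\tau(t)$, so any significant motion of the $\cp$-image forces a proportional change in some length ratio, hence in Teichm\"uller distance. This balance argument --- essentially the content of Theorem 4.3 of \cite{Minsky96} --- is the main obstacle and the technical core of the proof.

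For the reverse $(2) \Rightarrow (1)$, I argue the contrapositive: if $\tau$ enters deep into a thin region, the Minsky product region structure provides large metric balls whose closest-point projection to $\tau$ has large diameter, violating strong contraction. Tracking constants through each implication gives the stated uniform dependence on the topology of $S$, since all the coarse-geometric constants for $\C(S)$ and for the $\epsilon$-thick part of $\T(S)$ depend only on the topological type.
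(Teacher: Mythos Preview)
The paper does not actually prove this theorem: it is stated as a result of Minsky and attributed entirely to \cite{Minsky96}, with the parenthetical remark that $(1)\Leftrightarrow(2)$ is the main result of that paper while $(1)\Leftrightarrow(3)$ follows easily from its Theorem~4.3. So there is no in-paper proof to compare against; the ``paper's proof'' is a citation.

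Your sketch is broadly consistent with that attribution --- you correctly identify $(1)\Leftrightarrow(2)$ as the substantive content of \cite{Minsky96} and flag the balance/bounded-geometry argument (Minsky's Theorem~4.3) as the technical core. One organizational difference: the paper locates Theorem~4.3 as the source of $(1)\Leftrightarrow(3)$, whereas you invoke it inside your $(1)\Rightarrow(2)$ argument and handle $(1)\Leftrightarrow(3)$ via separate systole-map and extremal-length-convexity reasoning. Either packaging is fine, since Theorem~4.3 is really the common engine behind both equivalences. Your sketch is a reasonable outline of how one would reconstruct Minsky's argument, though of course the actual proof of $(1)\Rightarrow(2)$ in \cite{Minsky96} is considerably more involved than what you indicate.
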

Thus strongly contracting geodesics greatly illuminate the connection between Teichm\"uller space and the curve complex, as it is along these geodesics that the projection $\T(S)\to\C(S)$ is quasi-isometric.

Our main result is a version of \Cref{th: MM} for the Outer space $\X$ of a free group $\free$ and its projection $\fproj$ to the free factor complex $\F$ of $\free$. This projection has already proven to be highly useful beginning with Bestvina and Feighn's proof of hyperbolicity of the free factor complex \cite{BFhyp} and continuing with, for example, \cite{bestvina2012boundary,hamenstadt2013boundary,DT1,HorbezPoisson}. In fact, in the course of proving hyperbolicity of $\fc$, Bestvina and Feighn show that folding path geodesics which make definite progress in $\F$ are strongly contracting with respect to a specific projection tailored to folding paths. 
Combining with Theorem 4.1 and Lemma 4.11 from \cite{DT1}, this result of Bestvina and Feighn (\cite[Corollary 7.3]{BFhyp}) may be promoted to all geodesics:

\begin{theorem}\label{th: BF_contract}
Let $\gamma\colon \I \to \X$ be a geodesic whose projection to $\F$ is a $K$--quasigeodesic. Then there exists $D > 0$ depending only on $K$ (and the injectivity radius of the terminal endpoint of $\gamma$) such that $\gamma$ is $D$--strongly contracting in $\X$.
\end{theorem}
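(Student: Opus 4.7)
The plan is to leverage Bestvina--Feighn's contraction result \cite[Corollary 7.3]{BFhyp}, which establishes strong contraction for folding paths making definite progress in $\F$, but only with respect to the specific fold-tailored projection introduced in \cite{BFhyp}. The theorem is obtained by extending this conclusion to arbitrary geodesics $\gamma$ and to the standard closest-point projection, using the two cited results from \cite{DT1} as the key technical bridge.

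First, one would associate to $\gamma$ a folding path $\sigma$ with nearly the same endpoints, whose projection $\fproj\circ\sigma$ to $\F$ is again a $K'$-quasigeodesic with $K'$ depending only on $K$ and the injectivity radius of the terminal endpoint of $\gamma$. This reduction should be provided by \cite[Theorem 4.1]{DT1}. With this in hand, $\sigma$ makes definite progress in $\F$, so \cite[Corollary 7.3]{BFhyp} applies and yields a constant $D_0$ such that $\sigma$ is $D_0$-strongly contracting with respect to the Bestvina--Feighn fold-tailored projection.

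Next, I would upgrade this from the fold-tailored projection to the standard closest-point projection via \cite[Lemma 4.11]{DT1}, which should show that the two projections coarsely agree along folding paths making progress in $\F$. Combining, $\sigma$ is $D_1$-strongly contracting in the standard sense. Finally, since $\sigma$ and $\gamma$ fellow-travel with nearly matching endpoints, strong contraction passes from $\sigma$ to $\gamma$ with only a bounded loss in the contraction constant.

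The main obstacle is relating the fold-tailored projection to the closest-point projection: these are a priori very different objects, the former defined via the combinatorial folding structure of $\sigma$ and the latter purely metrically in $\X$, so establishing their coarse equivalence and then transferring the contraction property is the key technical content. The dependence on the injectivity radius of the terminal endpoint reflects a subtlety of the folding-path reduction: folding paths ending in the thin part may fail to effectively approximate arbitrary geodesics, so some thickness control at the terminal endpoint is needed to extract a useful folding path from $\gamma$.
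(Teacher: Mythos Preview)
Your plan is correct and matches the paper's approach: replace $\gamma$ by a standard geodesic/folding path $\rho$ with the same endpoints via \cite[Theorem~4.1]{DT1}, invoke the Bestvina--Feighn contraction machinery together with \cite[Lemma~4.11]{DT1} to get strong contraction for $\rho$ in the closest-point sense, and then transfer back to $\gamma$ using Hausdorff closeness.

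Two small calibrations are worth noting. First, you identify the ``main obstacle'' as reconciling the fold-tailored projection with closest-point projection, but in the paper this is dispatched entirely by citing \cite[Proposition~7.2]{BFhyp} and \cite[Lemma~4.11]{DT1}; the actual work in the written proof lies in your final step, transferring strong contraction from $\rho$ to $\gamma$. Because $d_\os$ is asymmetric, ``fellow-traveling implies comparable closest-point projections'' is not automatic, and the paper proves two explicit claims: that $\cp_{\gamma}$ and $\cp_\rho$ agree up to bounded error, and that $\cp_\rho$ is coarsely $1$--Lipschitz. Both use thickness of $\rho$ and $\gamma$ in an essential way. Second, your explanation of the injectivity-radius dependence is slightly off: it enters not through the folding-path approximation per se but through \cite[Lemma~4.3]{DT1}, which shows $\gamma$ itself lies in a definite thick part $\os_\epsilon$ (with $\epsilon$ depending on $K$ and the terminal injectivity radius); this thickness is then what makes the transfer arguments go through.
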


Here we prove a converse to \Cref{th: BF_contract}. Together, these establish an analog of \Cref{th: MM} in the free group setting.

\begin{theorem}\label{thm:contraction_impies_progress}
For each $D> 0$ there exist constants $K\ge 1$ and $\epsilon> 0$ with the following property. If $\gamma\colon \I\to \os$ is a nondegenerate $D$--strongly contracting geodesic, then $\gamma(\I)$ lies in the $\epsilon$--thick part $\os_\epsilon$ and $\fproj\circ\gamma\colon \I\to \fc$ is a $K$--quasigeodesic.
\end{theorem}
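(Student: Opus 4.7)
The proof separates into the upper and lower quasigeodesic bounds. The upper bound $d_\fc(\fproj\gamma(s), \fproj\gamma(t)) \le K|s-t| + K$ is the routine direction: $\fproj$ is coarsely Lipschitz from $\os$ (with its symmetric Lipschitz metric) to $\fc$, and the nondegeneracy hypothesis on $\gamma$ gives enough uniform injectivity-radius control along the geodesic to turn this into a uniform Lipschitz estimate for $\fproj\circ\gamma$. So the substance of the theorem is the lower bound $|s-t| \le K\,d_\fc(\fproj\gamma(s),\fproj\gamma(t)) + K$.

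For the lower bound I would argue by contraposition. Suppose there are times $s<t$ with $|s-t|$ very large while $d_\fc(\fproj\gamma(s),\fproj\gamma(t))$ is bounded by a small constant; the goal is to exhibit a point of $\os$ witnessing a failure of the $D$-strongly contracting property of $\gamma$. Choose a Bestvina--Feighn folding path $\sigma$ from $\gamma(s)$ to $\gamma(t)$. Because $\gamma$ is $D$-strongly contracting, it is Morse---a purely formal consequence in the metric space context, adapted here to the asymmetric setting---so any other geodesic joining the same endpoints stays within uniformly bounded Hausdorff distance of $\gamma|_{[s,t]}$. In particular $\sigma$ is almost as long as $\gamma|_{[s,t]}$ while still projecting to a bounded region of $\fc$.

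The decisive step is to convert this failure of $\fc$-progress along $\sigma$ into a concrete witness of non-contraction for $\gamma$. For this I would invoke the subfactor-projection machinery underlying \Cref{th: BF_contract}: a long folding path that fails to make definite progress in $\fc$ must have some proper free factor $A$ whose Bestvina--Feighn projection $\bfproj$ is active on a long subinterval of $\sigma$. At an interior point where $A$ is active one can perform a large, $A$-supported fold or deformation to produce a new point $x \in \os$ that lies close to $\sigma$ (hence close to $\gamma$ in $\fc$) but whose closest-point projection to $\gamma$ spans a subarc much longer than $D$. This contradicts the $D$-contracting hypothesis.

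The Morse fellow-traveling step and the upper Lipschitz bound are essentially formal once the asymmetric metric definitions of \S\ref{sec: strong_contract} are in place, and the main obstacle is the last paragraph: turning ``no $\fc$-progress along a folding path'' into an explicit, localized perturbation whose closest-point projection is unbounded. Running the $A$-supported fold construction quantitatively---so that the produced point $x$ is far enough from $\gamma$ (in the forward or backward direction, depending on which side of $\sigma$ one works) while its projection to $\gamma$ drifts by more than $D$---is where the technical core of the argument will sit, and this is where a careful interplay between the Bestvina--Feighn projection complex and the Morse fellow-traveling of $\sigma$ with $\gamma$ must be exploited.
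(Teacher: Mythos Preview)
Your proposal has a genuine gap at the step you call ``purely formal'': the claim that a $D$--strongly contracting geodesic in $\os$ is automatically Morse. In a symmetric metric space this is indeed a standard exercise, but the paper explicitly flags (see the outline of proof in the introduction and the remark following \Cref{th:contracting_orbit}) that in the \emph{asymmetric} Lipschitz metric the standard Morse argument breaks down: without thickness control the backward distance $d_\os(\gamma(t),\gamma(s))$ can be enormously larger than the forward distance $t-s$, and the usual subdivision-and-project argument for stability does not give uniform bounds. So you cannot simply assert that a folding path $\sigma$ from $\gamma(s)$ to $\gamma(t)$ fellow-travels $\gamma$; this is precisely the difficulty the paper is built to circumvent. (A smaller point: the upper bound needs no thickness or nondegeneracy at all, since $\fproj$ is coarsely Lipschitz for the \emph{directed} metric already.)

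The paper's route is entirely different from yours. Rather than attempting a Morse/fellow-traveling argument, it constructs a new projection $\minpts_\gamma\colon\pl\to\gamma(\I)$ sending a primitive conjugacy class $\alpha$ to the points of $\gamma$ where $\ell(\alpha\vert\,\cdot\,)$ is minimized. The two key lemmas are that $\minpts_\gamma$ is coarsely Lipschitz (\Cref{lem:coarse_lip_projection}) and that, once $\gamma$ is known to be $\epsilon$--thick, the composition $\minpts_\gamma\circ\plproj$ is a coarse retraction onto $\gamma(\I)$ (\Cref{lem:thick strong contract}); together these give the lower quasigeodesic bound directly (\Cref{prop:strong_contract_makes_progress}). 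The nondegeneracy hypothesis is used only to prove the thickness input, via the ``backing into thickness'' arguments of \S\S\ref{sec:backing_up}--\ref{sec:nondegeneracy}. None of this passes through folding paths, Bestvina--Feighn projections, or any Morse-type stability statement for $\gamma$.
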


Recall that the Lipschitz metric on Outer space is not symmetric. Hence, a geodesic $\gamma: \I \to\X$ is not necessarily a (quasi) geodesic when traversed in the reverse direction. The condition that $\gamma$ be \define{nondegenerate} in \Cref{thm:contraction_impies_progress} is, informally, that the backwards distance along $\gamma$ meets a certain threshold depending only on $D$. See \S\ref{sec: strong_contract} for a precise definition and discussion.

\begin{remark}[Parameterized \emph{vs} Unparameterized]
In saying the projection of a geodesic to $\F$ is a $K$--quasigeodesic, we always mean a \emph{parameterized} quasigeodesic.
For any directed geodesic $\gamma\colon \I\to\os$, it is known that $\fproj\circ\gamma\colon \I\to \F$ is an \emph{unparameterized} quasigeodesic in the sense that it may be reparameterized to yield a uniform quasigeodesic \cite[Proposition 9.2]{BFhyp}. The same holds for the projections of Teichm\"uller geodesics to the curve complex \cite[Theorem 2.3]{MasurMinsky}. 
\end{remark}

\begin{remark}[Thick geodesics in $\X$]
Combining \Cref{thm:contraction_impies_progress} with \Cref{th: BF_contract} gives the three implications $(3)\iff (2) \implies (1)$ of \Cref{th: MM} in the $\Out(\free)$ setting. We stress that the implication $(1)\implies (2)$ of \Cref{th: MM} is in fact false in the Outer space setting. 
Indeed, it is well-known that there are thick geodesics in $\os$ that nevertheless project to a bounded diameter subset of $\fc$. By \Cref{thm:contraction_impies_progress} such geodesics cannot be strongly contracting. 
\end{remark}

\begin{remark}[Hyperbolic isometries of $\F$]
Combining \Cref{thm:contraction_impies_progress} with Algom-Kfir's result \cite{AKaxis} that axes of fully irreducible automorphisms in $\X$ are strongly contracting gives an alternative proof of the fact that fully irreducible automorphisms act as loxodromic isometries on $\F$ (i.e. they have positive translation length). This result was proven by Bestvina and Feighn in \cite{BFhyp} using results in \cite{bestvina2010hyperbolic}.
\end{remark}

As an application of \Cref{thm:contraction_impies_progress}, we give conditions for when the orbit map from a finitely generated subgroup $\Gamma\le\Out(\free)$ into $\F$ is a quasi-isometric embedding. First, say that $\Gamma \le \Out(\free)$ is \define{contracting in $\X$} if there exists $G \in \X$ and $D >0$ so that for any two points in the orbit $\Gamma \cdot G$ there is a $D$--strongly contracting geodesic joining them.

\begin{theorem}\label{th:contracting_orbit}
Suppose that $\Gamma \le \Out(\free)$ is finitely generated and that the orbit map $\Gamma \to \X$ is a quasi-isometric embedding. Then $\Gamma$ is contracting in $\os$ if and only if the orbit map $\Gamma\to\fc$ to the free factor complex is a quasi-isometric embedding.
\end{theorem}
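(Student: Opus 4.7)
The plan is to prove each direction by invoking one of the two structural theorems already at hand: \Cref{thm:contraction_impies_progress} for the forward implication and \Cref{th: BF_contract} for the reverse. A useful preliminary observation for both directions is that because $\Gamma$ acts on $\os$ by isometries, every point of the orbit $\Gamma\cdot G$ has the same injectivity radius as $G$, so the ``injectivity radius of the terminal endpoint'' caveat in \Cref{th: BF_contract} is harmless and applies with a uniform constant along the orbit.

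For the forward direction, assume $\Gamma$ is contracting in $\os$. The upper bound in the QI embedding $\Gamma\to\fc$ is automatic: compose the assumed QI embedding $\Gamma\to\os$ with the coarsely Lipschitz projection $\fproj$. For the lower bound, given $g,h\in\Gamma$, invoke the hypothesis to select a $D$--strongly contracting geodesic $\gamma\colon\I\to\os$ joining $gG$ and $hG$ (or its reverse, whichever direction makes $\gamma$ nondegenerate). The QI embedding $\Gamma\to\os$ forces the length of $\gamma$ to grow linearly in $d_\Gamma(g,h)$, and by choosing the appropriate direction and using the equivariance of the orbit, the nondegeneracy threshold coming from $D$ is met once $d_\Gamma(g,h)$ exceeds some constant depending only on $D$ and the QI constants. \Cref{thm:contraction_impies_progress} then produces $K$ so that $\fproj\circ\gamma$ is a $K$--quasigeodesic, yielding
\[
d_\fc(\fproj(gG),\fproj(hG))\ge\tfrac{1}{K}\cdot\mathrm{length}(\gamma)-K\gtrsim d_\Gamma(g,h).
\]
For $d_\Gamma(g,h)$ below the threshold, the lower bound holds trivially after enlarging constants.

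For the reverse direction, assume $\Gamma\to\fc$ is a QI embedding. Given $g,h\in\Gamma$, let $\gamma$ be the Bestvina--Feighn folding path from $gG$ to $hG$, which is a geodesic in $\os$. By \Cref{th: BF_contract} it is enough to show that $\fproj\circ\gamma$ is a $K$--quasigeodesic in $\fc$ with $K$ independent of $g,h$. Combining the two QI embeddings yields that $\fproj$ restricted to the orbit $\Gamma\cdot G$ is itself a QI embedding, so the $\os$-length of $\gamma$ is linearly comparable to $d_\fc(\fproj(gG),\fproj(hG))$. The upper bound in the quasigeodesic condition is immediate from coarse Lipschitzness of $\fproj$. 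For the matching lower bound on subpaths, the plan is to invoke the folding-path technology of \cite{BFhyp,DT1}, together with hyperbolicity of $\fc$ and quasiconvexity of the orbit $\Gamma\cdot\fproj(G)$ in $\fc$, to rule out long stalls of $\fproj\circ\gamma$ inside thin regions of $\os$.

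This last step is the main obstacle. Correct endpoint separation and coarse Lipschitzness of $\fproj$ are not by themselves enough to upgrade $\fproj\circ\gamma$ to a uniform parameterized quasigeodesic, because a priori the projection could pause for long stretches inside a thin neighborhood of a proper free factor while still having the right global efficiency. The plan is to exclude this using the Bestvina--Feighn/DT structural picture of how folding paths interact with $\fproj$ (coarse monotonicity, progress whenever the path leaves a neighborhood of any fixed free factor), together with the observation that a QI-embedded $\Gamma$-orbit in $\fc$ cannot concentrate near a single free factor. Once the quasigeodesic property of $\fproj\circ\gamma$ is in hand, \Cref{th: BF_contract} supplies the uniform contraction constant $D$ and completes the argument.
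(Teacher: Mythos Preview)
Your forward direction is essentially correct and matches the paper in spirit, though the paper handles it a bit more directly: rather than invoking \Cref{thm:contraction_impies_progress} and wrestling with nondegeneracy, it observes that the initial point $g\cdot R$ of the geodesic lies in $\os_{\epsilon_0}$ (where $\epsilon_0$ is the injectivity radius of $R$), applies \Cref{prop:left_thick} to get the whole geodesic uniformly thick, and then applies \Cref{prop:strong_contract_makes_progress} directly. Your route via \Cref{thm:contraction_impies_progress} also works, since \Cref{lem:usually_nondegenerate} gives nondegeneracy once $|\I|$ exceeds a constant depending only on $D$ and the injectivity radius of $R$; the remark about ``choosing the appropriate direction'' is unnecessary (and potentially problematic, since reversing a directed geodesic need not yield a geodesic, nor one that is strongly contracting).

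The reverse direction has a genuine gap, and you have correctly identified where it lies: you need $\fproj\circ\gamma$ to be a \emph{parameterized} quasigeodesic with uniform constants, and correct endpoint separation together with the unparameterized quasigeodesic property from \cite{BFhyp} does not rule out long stalls. Your proposed fix via ``coarse monotonicity'' and quasiconvexity of the orbit in $\fc$ does not close this gap, because the folding path $\gamma$ lives in $\os$ and could a priori wander far from the orbit $\Gamma\cdot G$ in $\os$ even while its $\fc$--image stays near the (quasiconvex) $\fc$--orbit. The missing ingredient, which the paper invokes directly, is \cite[Theorem 5.5]{DT1}: under the hypothesis that $\Gamma\to\fc$ is a quasi-isometric embedding, the orbit $\Gamma\cdot R$ is $A$--quasiconvex \emph{in $\os$}. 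Once you know the geodesic $\gamma$ stays in a symmetric $A$--neighborhood of $\Gamma\cdot R$, every point $\gamma(t)$ is close to some orbit point $g_t\cdot R$, and the two quasi-isometric embeddings $\Gamma\to\os$ and $\Gamma\to\fc$ together with the coarse Lipschitz property of $\fproj$ immediately force $d_\fc(\fproj\gamma(s),\fproj\gamma(t))\asymp |s-t|$. Then \Cref{th: BF_contract} applies.
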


We note that the ``if'' direction of \Cref{th:contracting_orbit} appeared first in our earlier work \cite{DT1} as a crucial ingredient in the proof of the following result about hyperbolic extensions of free groups:

\begin{theorem}[{\cite[Theorem 1.1]{DT1}}] \label{th:hyp_extns}
If $\Gamma\le\Out(\free)$ is purely atoroidal and the orbit map $\Gamma\to\fc$ is a quasi-isometric embedding, then the corresponding $\free$--extension $E_\Gamma$ is hyperbolic.
\end{theorem}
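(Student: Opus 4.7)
The plan is to apply the Bestvina--Feighn combination theorem to the short exact sequence $1\to \free \to E_\Gamma \to \Gamma \to 1$ in order to conclude hyperbolicity of $E_\Gamma$. The first step is to upgrade the orbit map $\Gamma\to\fc$ to a uniformly strongly contracting orbit in $\os$. Since $\fproj\colon \os\to\fc$ is coarsely Lipschitz and $\Gamma$ acts by isometries, the quasi-isometric embedding $\Gamma\to\fc$ automatically gives that $\Gamma\to \os$ is a quasi-isometric embedding. Combining this with \Cref{th: BF_contract} (equivalently, with \Cref{th:contracting_orbit}), any two orbit points $g\cdot G_0, h\cdot G_0$ are joined by a uniformly $D$--strongly contracting folding-path geodesic in $\os$; in particular $\Gamma$ is a word-hyperbolic group, since it quasi-isometrically embeds in the hyperbolic space $\fc$.

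Next I would construct a geometric model for $E_\Gamma$. For each generator $s$ of $\Gamma$, select a folding-path geodesic $\gamma_s$ from $G_0$ to $s\cdot G_0$; taking universal covers of the graphs $\gamma_s(t)$ yields a continuous family of metric $\free$-trees interpolating between the Cayley trees associated to $G_0$ and $s\cdot G_0$. Gluing these interpolating families along the Cayley graph of $\Gamma$ produces a tree of hyperbolic spaces on which $E_\Gamma$ acts geometrically: the base is the (hyperbolic) Cayley graph of $\Gamma$, and the fibers are $\R$--trees, hence trivially hyperbolic.

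The heart of the argument is verifying the Bestvina--Feighn annuli flare condition: there must exist $\lambda>1$ and $n\ge 1$ such that for every non-trivial conjugacy class $\alpha \in \free$ and every geodesic segment $g_{-n},\dots,g_n$ of length $2n$ in the Cayley graph of $\Gamma$,
\[
\max\bigl\{\len_{g_n \cdot G_0}(\alpha),\ \len_{g_{-n} \cdot G_0}(\alpha)\bigr\}\ \ge\ \lambda\cdot \len_{g_0 \cdot G_0}(\alpha).
\]
Pure atoroidality ensures that no conjugacy class is fixed by any non-trivial element of $\Gamma$, removing the only global obstruction to flaring. The contracting orbit from the first step supplies the crucial quantitative input: along any $D$--strongly contracting folding path whose projection to $\fc$ is a quasigeodesic, standard folding-path length estimates refining those of \cite{BFhyp} show that the length of any conjugacy class is bi-exponential in the folding parameter, growing or shrinking exponentially outside a bounded interval, with constants depending only on $D$. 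A compactness argument modulo the $\Gamma$-action on $\os$, together with $\Gamma$-equivariance, upgrades this pointwise estimate to the uniform flaring constants required.

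Once annuli flare is established, the Bestvina--Feighn combination theorem directly gives hyperbolicity of the total space, and hence of $E_\Gamma$. The main obstacle is the flaring verification: one must combine the purely qualitative fact that no conjugacy class is periodic (from purely atoroidal) with the uniform contraction geometry from the first step to extract universal constants $\lambda,n$ working simultaneously for all conjugacy classes $\alpha$ and all segments in $\Gamma$. This requires a delicate interplay between the geometry of folding paths in $\os$, the projection $\fproj$ to $\fc$, and the cocompactness of the $\Gamma$-action on its orbit.
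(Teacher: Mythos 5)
There is nothing in this paper to compare your argument against: \Cref{th:hyp_extns} is not proved here at all, but is quoted from \cite[Theorem 1.1]{DT1}, where its proof occupies a large portion of that earlier article. Your outline does match the broad strategy of that proof (promote the quasi-isometric embedding $\Gamma\to\fc$ to uniform quasiconvexity/contraction of an orbit in $\os$, then verify a flaring condition for the induced bundle of trees over $\Gamma$ and invoke a combination-type theorem), and your preliminary steps are fine: the orbit map to $\os$ is a quasi-isometric embedding, \Cref{th:contracting_orbit} (``if'' direction) gives uniformly strongly contracting geodesics between orbit points, and $\Gamma$ is hyperbolic because a geodesic space quasi-isometrically embedded in a hyperbolic space is hyperbolic.

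However, as a proof the proposal has two genuine gaps. First, your base space is the Cayley graph of $\Gamma$, which is not a tree, so the Bestvina--Feighn combination theorem in the form you invoke does not apply; one needs a hyperbolicity criterion for metric (graph) bundles over an arbitrary hyperbolic base, i.e.\ the Mj--Sardar theorem, which is what \cite{DT1} actually uses. Second, and more seriously, the uniform flaring statement is the entire content of the theorem, and your argument for it is an assertion. The claim that along a $D$--strongly contracting folding path ``the length of any conjugacy class is bi-exponential in the folding parameter, growing or shrinking exponentially outside a bounded interval, with constants depending only on $D$'' is not a standard estimate and is not correct as stated: progress in $\fc$ (equivalently strong contraction) controls only the diameter of the set of times at which a fixed class is \emph{short} (compare \Cref{lem:thick strong contract}); it does not force exponential growth/decay of every class with uniform constants, and in particular does not by itself rule out classes whose lengths stay bounded (above the shortness threshold) over long segments of the orbit. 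That is precisely where pure atoroidality must enter quantitatively, whereas you use it only as the qualitative remark that no class is $\Gamma$--periodic. The appeal to ``a compactness argument modulo the $\Gamma$--action'' does not obviously close this, since flaring must be uniform over the infinitely many conjugacy classes of $\free$ and over all geodesic segments of $\Gamma$, and there is no cocompactness in the conjugacy-class variable. As it stands you have a plausible roadmap that mirrors \cite{DT1}, but the key flaring step remains unproved.
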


While the exact converse to \Cref{th:hyp_extns} is false (see \cite[\S1]{DT1}), it would nevertheless be interesting to obtain an partial converse, that is, to naturally characterize the hyperbolic extensions of $\free$ that arise from subgroups of $\Out(\free)$ that quasi-isometrically embed into $\fc$. 
It is our hope that the equivalence provided by \Cref{th:contracting_orbit} will be a useful step towards establishing such a converse.

During the completion of this paper, Hamenst\"adt and Hensel proved a result \cite[Theorem 1]{HH} that is related to \Cref{th:contracting_orbit} above.
Their theorem pertains to Morse geodesics in $\os$ and relies on Hamenst\"adt's notion of lines of minima in $\X$, introduced in \cite{Hlines}. 
We remark, however, that there is no a priori connection between strongly contracting and Morse geodesics in the asymmetric metric space $\os$ without additional assumptions on the geodesic.

\begin{remark}
\Cref{th:contracting_orbit} can be used to give new examples of strongly contracting geodesics in $\X$, beyond those which are axes of fully irreducible elements of $\Out(\free)$. Such axes were shown to be contracting by Algom-Kfir \cite{AKaxis}. For the construction, begin with a finitely generated subgroup $\Gamma \le \Out(\free)$ for which the orbit map $\Gamma \to \F$ is a quasi-isometric embedding. Many examples of such subgroups are constructed in Section $9$ of \cite{DT1}. For $R \in \X$ fixed, \Cref{th:contracting_orbit} implies that there is a $D >0$ such that for any $g,h \in \Gamma$, any directed geodesic $[g\cdot R,h\cdot R]$ is $D$--strongly contracting. Using the Arzela--Ascoli theorem, as in the proof of \cite[Proposition 5.6]{DKTct}, one may additionally take limits of such geodesics to extract bi-infinite geodesics that are $D$--strongly contracting. The geodesics constructed in this manner typically will not be axes for any fully irreducible automorphisms of $\free$.
\end{remark}

\paragraph{Outline of proof.} Let us briefly describe our approach to \Cref{thm:contraction_impies_progress}. Bestvina and Feighn's \cite{BFhyp} proof that $\fc$ is hyperbolic relies on constructing for every folding path $\gamma\colon \I\to \os$ a corresponding projection $\bfproj_\gamma\colon\fc\to\gamma(\I)$. The projection $\bfproj_\gamma$ is defined in terms of the illegal turn structure on the path $\gamma(t)$, and a careful analysis of $\bfproj_\gamma$  allows one to prove 
(i) that $\fproj\circ\bfproj_\gamma$ is a coarsely contracting retraction onto $\fproj(\gamma(\I))$ \cite[Proposition 7.2]{BFhyp} and 
(ii) that $\bfproj_\gamma\circ\fproj\colon \os\to\gamma(\I)$ coarsely agrees with the closest-point projection provided $\gamma$ makes definite progress in $\fc$ (see \cite[Lemma 4.11]{DT1}). This leads to Bestvina and Feighn's result \cite[Corollary 7.3]{BFhyp} that folding paths which make definite progress in $\fc$ are strongly contracting in $\os$ (c.f. \Cref{th: BF_contract}).

In a similar spirit, our proof of \Cref{thm:contraction_impies_progress} proceeds by constructing an appropriate projection $\minpts_\gamma\colon\pl\to\gamma(\I)$ for each geodesic $\gamma\colon \I\to \os$, where here $\pl$ is the complex of primitive conjugacy classes in $\free$ (note that $\pl$ is $\Out(\free)$--equivariantly quasi-isometric to $\fc$; see \S\ref{sec:background}). The map $\minpts_\gamma\colon \pl\to \gamma(\I)$ has a very natural definition: simply send a conjugacy class $\alpha$ to the set of points along $\gamma(\I)$ where the length of $\alpha$ is minimized. Our key technical results then show that for every $D$--strongly contracting geodesic $\gamma$, (i) the composition $\minpts_\gamma\circ \plproj\colon \os\to \gamma(\I)$ coarsely agrees with closest-point projection for distant points (\Cref{lem:projections_agree}) and  (ii) $\minpts_\gamma$ is uniformly coarsely Lipschitz (\Cref{lem:coarse_lip_projection}).

A fundamental technical difficulty arises from the fact that the Lipschitz distance $d_\os$ is highly asymmetric for points in the thin part of $\os$ (see \S\ref{sec:background}). For example: since the standard Morse lemma breaks down in the presence of boundless asymmetry, it is unclear whether strongly contracting geodesics are necessarily stable in $\os$ (that is, uniform quasigeodesics a-priori need not fellow travel strongly contracting geodesics with the same endpoints). To rule out such pathological behavior, much of the work in our discussion is devoted to proving that all nondegenerate $D$--strongly contracting geodesics lie in some uniform thick part of $\os$ (\Cref{cor:nondegen_implies_min} and \Cref{prop:left_thick}). With this key tool in hand, we are able to show (\Cref{lem:thick strong contract}) that the composition $\minpts_\gamma\circ \plproj\colon \os\to\gamma(\I)$ is a coarse retraction, meaning that points on $\gamma(\I)$ move a uniformly bounded amount. 
Combining this with the coarse Lipschitz property for $\minpts_\gamma$ (\Cref{lem:coarse_lip_projection}) then easily leads to our main result that strongly contracting geodesics in $\os$ make definite progress in $\fc$ (see \Cref{prop:strong_contract_makes_progress}).

\paragraph{Acknowledgments.} The authors thank the referee for several helpful comments. The first named author was partially supported by NSF grants DMS 1204814 and 1711089. The second named author was partially supported by NSF grants DMS 1400498 and 1744551. We also acknowledge support from NSF grants DMS 1107452, 1107263, 1107367 ``RNMS: GEometric structures And Representation varieties'' (the GEAR Network) and from the GATSBY seminar at Brown \& Yale.

\section{Background}\label{sec:background}
We briefly recall the necessary background material on the metric structure of Outer space; see \cite{FMout, BFhyp, DT1} for additional details.

\paragraph{Outer space.} Let $\free$ denote the free group of rank $r = \rank(\free) \ge 3$. 
Let $\rose$ denote the $r$--petal rose with vertex $v\in \rose$, and fix an isomorphism $\free\cong \pi_1(\rose,v)$. For our purposes, a \define{graph} is a $1$--dimensional CW complex, and a connected, simply connected graph is a \define{tree}. A \define{core graph} is a finite graph all of whose vertices have degree at least $2$.

We now define Culler and Vogtmann's \cite{CVouter} Outer space $\os$ of marked metric graphs. A \define{marked graph} $(G, g)$ is a core graph $G$ together with a \define{marking} $g\colon \rose \to G$, i.e. a homotopy equivalence. A \define{metric} on $G$ is a function $\ell\colon E(G) \to \R_{> 0}$ from the set of edges of $G$ to the positive real numbers, which assigns a length $\ell(e)$ to each edge $e \in E(G)$. The sum $\sum_{e\in E(G)} \ell(e)$ is called the \define{volume} of $G$.
With this setup, a \define{marked metric graph} is defined to be the triple $(G,g,\ell)$;  two triples $(G_1,g_1,\ell_1)$ and $(G_2,g_2,\ell_2)$ are \define{equivalent} if there is a graph isometry $\phi\colon G_1 \to G_2$ that preserves the markings in the sense that $\phi \circ g_1$ is homotopic to $g_2$. \define{Outer space}, denoted $\X$, is the set of equivalence classes of marked metric graphs of volume $1$.

The marking $\rose\to G$ for $G\in \os$ allows us to view any nontrivial conjugacy class $\alpha$ in $\free$ as a homotopy class of loops in the core graph $G$. The unique immersed loop in this homotopy class is denoted by $\alpha\vert G$, which we view as an immersion of $S^1$ into $G$. 
The \define{length of $\alpha$ in $G\in \os$}, denoted $\ell(\alpha\vert G)$, is the sum of the lengths of the edges of $G$ crossed by $\alpha\vert G$, counted with multiplicities. The \define{standard topology on $\os$} is the coarsest topology such that all of the length functions $\ell(\alpha\vert\;\cdot\;)\colon \os \to \R_+$ are continuous \cite{CVouter}. This topology agrees with other naturally defined topologies on $\X$, including the one induced by the Lipschitz metric defined below. See \cite{CVouter, paulin1989gromov, FMout} for details.

A \define{difference of markings} from $G\in \os$ to $H\in\os$ is any map $\phi\colon G\to H$ that is homotopic to $h\circ g\inv$, where $g$ and $h$ are the markings on $G$ and $H$, respectively. The \define{Lipschitz distance} from $G$ to $H$ is then defined to be
\[d_{\X}(G,H) \colonequals \inf \{ \log\left(\mathrm{Lip}(\phi)\right) \mid \phi \simeq h \circ g^{-1}\},\]
where $\mathrm{Lip}(\phi)$ denotes the Lipschitz constant of the difference of markings $\phi$. We note that while $d_\os$ is in general asymmetric (that is, $d_\os(G,H)\neq d_\os(H,G)$), it satisfies definiteness ($d_\os(G,H) = 0$ if and only if $G=H$) and the ordered triangle inequality ($d_\os(E,H) \leq d_\os(E,G) + d_\os(G,H)$) \cite{FMout}. We also have the following important result, originally due to Tad White, relating the Lipschitz distance to the ratio of lengths of conjugacy classes in the two graphs:

\begin{proposition}[See Francaviglia--Martino \cite{FMout} or Algom-Kfir \cite{AKaxis}]\label{pro: distance}
For every $G\in \os$ there exists a finite set $\mathcal{C}_G$ of primitive conjugacy classes, called \define{candidates}, whose immersed representatives in $G$ cross each edge at most twice and such that for any $H\in \os$
\[
d_\os(G,H) = \max_{\alpha\in \mathcal{C}_G} \log\left(\frac{\ell(\alpha\vert H)}{ \ell(\alpha\vert G)}\right) = \sup_{\alpha\in \free} \log\left(\frac{\ell(\alpha\vert H)}{ \ell(\alpha\vert G)}\right).\]
\end{proposition}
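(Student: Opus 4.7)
The plan is to prove the equality of the Lipschitz distance with the length-ratio supremum by establishing both the easy bound $\sup_{\alpha\in\free}\log(\ell(\alpha\vert H)/\ell(\alpha\vert G)) \le d_\os(G,H)$ and the harder bound $d_\os(G,H) \le \max_{\alpha\in\mathcal{C}_G}\log(\ell(\alpha\vert H)/\ell(\alpha\vert G))$; the remaining inequality in the statement is then immediate since $\mathcal{C}_G\subseteq\free$. The easy direction is standard: given any difference of markings $\phi\colon G\to H$ with $\Lip(\phi)=L$, the composition $\phi\circ(\alpha\vert G)$ is a loop in $H$ freely homotopic to $\alpha\vert H$ of length at most $L\cdot\ell(\alpha\vert G)$; since $\alpha\vert H$ minimizes length in its homotopy class, $\ell(\alpha\vert H)\le L\cdot\ell(\alpha\vert G)$, and taking logarithms and then infimizing over $\phi$ yields the bound.

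For the harder direction, I would first use an Arzel\`a--Ascoli compactness argument, restricted to PL maps that are affine on each edge of $G$, to show the infimum defining $d_\os(G,H)$ is realized by some such $\phi$ with $L\colonequals\Lip(\phi)$. Define the \emph{tension subgraph} $\Delta\subseteq G$ to consist of the edges on which $\phi$ has slope exactly $L$. The key structural claim is that $\Delta$ contains a subgraph of one of three types: a simple embedded loop, a figure-eight (wedge of two embedded loops), or a barbell (a pair of disjoint embedded loops joined by an embedded arc). Granting this, each configuration supports a primitive conjugacy class $\alpha$ whose immersed representative in $G$ lies in $\Delta$ and crosses every edge of $G$ at most twice; moreover, a gate analysis at each vertex of $\Delta$ ensures that $\phi\circ(\alpha\vert G)$ has no backtracking and is therefore already the immersed representative $\alpha\vert H$. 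Since every edge of $\alpha\vert G$ is stretched by exactly $L$, this yields $\ell(\alpha\vert H)/\ell(\alpha\vert G)=L=e^{d_\os(G,H)}$. Defining $\mathcal{C}_G$ to be the (clearly finite) collection of all primitive conjugacy classes of $\free$ whose immersed representatives in $G$ arise as simple loops, figure-eights, or barbells then completes the proof.

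The main obstacle is verifying the structural claim about $\Delta$, for which I would argue by contradiction. If $\Delta$ avoided each of the three subgraph types above, then at some vertex $v\in\Delta$ the images under $d\phi_v$ of all tension half-edges at $v$ would lie in a single gate at $\phi(v)\in H$, which would allow a local perturbation of $\phi(v)$ in the complementary direction to strictly decrease the stretch of every tension edge at $v$ and hence reduce $\Lip(\phi)$, contradicting the minimality of $L$. Making this rigorous requires careful bookkeeping to ensure that such a perturbation does not inadvertently create new edges of slope exceeding $L$, which is one reason for restricting attention to PL maps that are affine on each edge. The final step, showing that the resulting candidate loop's $\phi$-image is itself an immersion, amounts to propagating the gate-distinctness condition around the loop so that consecutive edges never collapse onto a single direction at $\phi(v)$.
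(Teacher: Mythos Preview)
The paper does not give its own proof of this proposition; it is quoted as a known result (originally due to Tad White) with the references Francaviglia--Martino and Algom-Kfir supplying complete arguments. Your outline follows the standard approach from those sources: realize the infimum by an optimal PL map, pass to the tension subgraph $\Delta$, and extract a legal candidate loop.

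That said, the logical structure of your ``structural claim'' is muddled in a way that leaves a genuine gap. You phrase the claim as ``$\Delta$ contains a subgraph of one of three types'' and then propose to prove it by contradiction, arguing that if $\Delta$ avoided all three types then some vertex would have a single gate. But subgraph containment is a purely topological condition: whenever $\Delta$ is not a forest it already contains an embedded loop, regardless of the gate structure, so your contrapositive is not the right one. What you actually need is a \emph{legal} loop in $\Delta$---one whose consecutive edges always enter distinct gates---and mere subgraph containment does not give this. The correct argument separates two steps: first, use the perturbation idea to show that at every vertex of $\Delta$ the $\Delta$-directions occupy at least two gates (this is where optimality is used); second, as an independent combinatorial fact about finite graphs with such a train-track structure, show that one can build a legal path long enough to close up into a legal loop, and that among legal loops there is one of candidate type. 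Your final paragraph collapses these, making it look as though the perturbation argument directly produces the candidate subgraph, when in fact it only produces the two-gate condition from which the legal loop must still be extracted.
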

\noindent Note that because each candidate $\alpha \in \mathcal{C}_G$ crosses each edge of $G$ no more than twice, $\ell(\alpha|G) \le 2$.\\

Finally, a \define{geodesic in $\os$} is by definition a \emph{directed} geodesic, that is, a path $\gamma\colon \I\to \os$ such that $d_\os(\gamma(s),\gamma(t)) = t-s$ for all $s<t$. Throughout, $\I$ will always denote a closed interval $\I \subset \mathbb{R}$, and we write $\I_{\pm} \in \mathbb{R} \cup \{\pm \infty\}$ for the (possibly infinite) endpoints of the interval $\I$.

\paragraph{Asymmetry and the thick part of Outer space.} For $\epsilon > 0$, we define the \define{$\epsilon$--thick part of $\os$} to be the subset
\[\os_\epsilon \colonequals \{G\in \os : \ell(\alpha\vert G) \ge \epsilon\text{ for every nontrivial conjugacy class $\alpha$ in $\free$}\}.\]
It is also sometimes convenient to consider the symmetrization of the Lipschitz metric:
\[\dsym(G,H) \colonequals d_\os(G,H) + d_\os(H,G)\]
which is an actual metric on $\os$ and induces the standard topology \cite{FMout}. Because the Lipschitz metric $d_\os$ is not symmetric, care must be taken when discussing distances in $\os$. This asymmetry, however, is somewhat controlled in the thick part $\os_\epsilon$:

\begin{lemma}[Handel--Mosher \cite{handel2007expansion}, Algom-Kfir--Bestvina \cite{AlBest}] \label{lem: symmetric_in_thick}
For any $\epsilon>0$, there exists $\sym_\epsilon\geq 1$ so that for all $G,H\in \os_\epsilon$ we have
\[ d_\os(H,G) \le \dsym(H,G) = \dsym(G,H) \le \sym_\epsilon \cdot d_\os(G,H).\]
\end{lemma}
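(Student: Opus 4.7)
The first inequality $d_\os(H,G) \le \dsym(H,G)$ is immediate from the definition $\dsym(H,G) = d_\os(H,G) + d_\os(G,H)$ together with nonnegativity of $d_\os$, and the equality $\dsym(H,G) = \dsym(G,H)$ holds by construction. The content of the lemma therefore lies in the inequality $\dsym(G,H) \le \sym_\epsilon \cdot d_\os(G,H)$, which is equivalent to establishing a uniform multiplicative bound of the form $d_\os(H,G) \le (\sym_\epsilon - 1)\cdot d_\os(G,H)$ for all $G,H \in \os_\epsilon$.

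The plan is to attack this bound via the candidate characterization of \Cref{pro: distance}: writing
\[
d_\os(H,G) \;=\; \max_{\beta\in\mathcal{C}_H}\, \log\!\left(\frac{\ell(\beta|G)}{\ell(\beta|H)}\right),
\]
the thick hypothesis $H \in \os_\epsilon$ gives $\ell(\beta|H)\ge \epsilon$ for every candidate $\beta$, so everything reduces to controlling $\ell(\beta|G)$ from above by an expression of the form $L_\epsilon \cdot e^{c_\epsilon\, d_\os(G,H)}$ in terms of constants depending only on $\epsilon$.

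My proposed mechanism for such a bound is the bounded combinatorics of thick graphs. After passing to a canonical representative with vertices of valence $\ge 3$, a rank-$r$ core graph has a uniformly bounded number of edges, and any $G \in \os_\epsilon$ admits a free basis of $\free$ realized by short loops (combining each non-tree edge of a spanning tree with tree-paths of length at most $\vol(G)=1$ yields generators of length $\le 3$ in $G$). An optimal difference of markings $G \to H$ transports this basis to loops in $H$ of length at most $3e^{d_\os(G,H)}$, while the thick condition on $H$ bounds those same lengths below by $\epsilon$. Using that any candidate $\beta$ of $H$ has uniformly bounded combinatorial length in $H$ and comparing this ``stretched'' short basis of $\free$ coming from $G$ against the intrinsic short basis of $H$, one controls the word-length of $\beta$ in the $G$-basis in terms of $d_\os(G,H)$ and $\epsilon$, which in turn bounds $\ell(\beta|G)$ as required.

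The main technical obstacle is making this basis-comparison step genuinely uniform, since a change of free basis in $\free$ can a priori introduce arbitrary distortion. I would handle this by appealing to the cocompactness of the $\Out(\free)$-action on $\os_\epsilon$: by equivariance of $d_\os$, after applying an outer automorphism one may assume $G$ lies in a fixed compact fundamental domain, and for pairs $(G,H)$ with $d_\os(G,H)\le 1$ a compactness-and-continuity argument then produces a uniform local asymmetry bound $d_\os(H,G)\le C_\epsilon$. To promote this local bound to the global multiplicative bound claimed by the lemma, concatenate along a chain $G=G_0,G_1,\dots,G_k=H$ in $\os_\epsilon$ with $d_\os(G_i,G_{i+1})\le 1$ and $k$ proportional to $d_\os(G,H)$, and apply the local bound on each segment. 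The remaining subtlety is that such a chain must be chosen inside the thick part, which requires a small rerouting step since straight Lipschitz geodesics between thick points may dip into the thin region before returning.
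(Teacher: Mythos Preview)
The paper does not supply a proof of this lemma; it is quoted from the cited references (Handel--Mosher, Algom-Kfir--Bestvina) without argument, so there is nothing in the paper to compare against.

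Your reduction and your local step via cocompactness are both fine: the $\Out(\free)$--action on $\os_\epsilon$ has compact quotient, the set of pairs $(G,H)\in\os_\epsilon\times\os_\epsilon$ with $G$ in a fixed compact fundamental domain and $d_\os(G,H)\le 1$ is compact, and continuity of $d_\os(H,G)$ then gives a uniform local bound $C_\epsilon$. The gap is in the globalization, and what you call a ``small rerouting step'' is in fact the entire content of the lemma. You need a chain $G=G_0,\dots,G_k=H$ inside $\os_\epsilon$ with $d_\os(G_i,G_{i+1})\le 1$ and $k\le C'\,d_\os(G,H)$. A $d_\os$--geodesic gives the right $k$ but may leave $\os_\epsilon$; a path constrained to stay in $\os_\epsilon$ has no a priori bound on its $d_\os$--length in terms of $d_\os(G,H)$. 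Since your local bound together with such a chain would immediately prove the lemma, the existence of the chain carries the full weight of the result and cannot be dismissed as a detail.

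The natural attempts to manufacture the chain are circular. Routing through an $\Out(\free)$--orbit of a basepoint requires word length in $\Out(\free)$ to be comparable to $d_\os$, but word length is quasi-isometric to $\dsym$, and comparing $\dsym$ to $d_\os$ on $\os_\epsilon$ is the lemma itself. Your first basis-comparison approach has the same defect: bounding the word length of an $H$--candidate $\beta$ in a $G$--adapted basis amounts to bounding a backwards distance $d_\os(H,R)$ from $H$ to a rose on that basis, which is again what you are trying to control. The published proofs avoid this circularity by a direct quantitative estimate (tracking edge-length data along a folding path from $G$ to $H$ and using thickness of the endpoints to close the estimate) rather than a soft local-to-global scheme.
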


\noindent For $G,H \in \X$, we also use the notation $\diam_\X (G,H)$ to denote $\diam_{\X} \{G,H\} = \max \{ d_\X(G,H) ,d_\X(H,G)\} .$ Observe that $\diam_\X(G,H) \le \dsym(G,H) \le 2\diam_\X(G,H)$.

\paragraph{The factor complex.}
The main purpose of this paper is to show that strongly contracting geodesics in $\X$ project to parameterized quasigeodesics in the free factor complex, which is defined as follows: 
The \define{factor complex} $\F$ associated to the free group $\free$ is the simplicial complex whose vertices are conjugacy classes of proper, nontrivial free factors of $\free$. Vertices $[A_0] ,\ldots, [A_k]$ span a $k$--simplex if, after reordering, we have the proper inclusions $A_0 <  \dotsb < A_k$. This simplicial complex was first introduced by Hatcher and Vogtmann in \cite{HVff}. Since we are interested in the coarse geometry of $\F$, we will only consider its $1$--skeleton equipped with the path metric induced by giving each edge length $1$. The following theorem of Bestvina and Feighn is fundamental to the geometric study of $\Out(\free)$:

\begin{theorem}[Bestvina--Feighn \cite{BFhyp}]
The factor complex $\F$ is Gromov hyperbolic.
\end{theorem}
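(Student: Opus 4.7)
The plan is to apply a standard criterion for Gromov hyperbolicity---roughly, to exhibit a family of paths joining any pair of points in $\F$ that admits uniformly coarsely Lipschitz projections from $\F$ onto the paths. Since $\F$ does not manifestly carry preferred geodesics, the approach is to import them from Outer space via the projection $\fproj \colon \os \to \F$ applied to folding paths.

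First, I would carefully set up $\fproj \colon \os \to \F$, sending a marked graph $G$ to the conjugacy class of a proper free factor represented by a proper core subgraph of $G$ (or equivalently by a short primitive conjugacy class). One verifies it is coarsely well-defined and coarsely Lipschitz, using that the candidate loops furnished by \Cref{pro: distance} control how the ``shortest'' free factor data on $G$ can vary under small perturbations; elementary moves in $\os$ then translate into bounded moves in $\F$.

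Second, for each folding path $\gamma \colon [a,b] \to \os$, I would construct a projection $\bfproj_\gamma \colon \F \to \gamma$ defined through the illegal-turn / train-track structure of the fold. Given a free factor $[A]$, one tracks an immersed representative of a primitive element of $A$ along $\gamma(t)$ and assigns $[A]$ to the first time at which this representative begins to cross illegal turns of the fold---equivalently, the latest time at which it remains embedded, up to bounded combinatorial error, in a natural subcomplex of the train-track structure on $\gamma(t)$. The main technical step, and the principal obstacle, is proving that $\bfproj_\gamma$ is coarsely Lipschitz. This demands a delicate combinatorial analysis of how Stallings-style folds alter the train-track structure on loops representing free factors, and governs the whole argument.

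With the projections $\bfproj_\gamma$ in hand, one verifies that (i) $\fproj \circ \gamma$ is an unparameterized quasi-geodesic in $\F$ joining $\fproj(\gamma(a))$ and $\fproj(\gamma(b))$, and (ii) the composition $\bfproj_\gamma \circ \fproj \colon \os \to \gamma$ is a uniformly coarsely Lipschitz coarse retraction that behaves like a closest-point projection. Together, these yield a family of paths through every pair of points in $\F$ satisfying the hypotheses of a Masur--Minsky / Bowditch-style ``guessing geodesics'' criterion, from which Gromov hyperbolicity of $\F$ follows. The subtlety throughout is that one is measuring everything in $\os$ using the asymmetric Lipschitz metric but transferring conclusions to the symmetric simplicial metric on $\F$, so careful attention to thick-part behavior (as encoded in \Cref{lem: symmetric_in_thick}) is needed to keep all estimates uniform.
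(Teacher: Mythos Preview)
The paper does not prove this theorem: it is stated as a background result and attributed to Bestvina--Feighn \cite{BFhyp}, with no argument given. So there is no ``paper's own proof'' to compare your proposal against.

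That said, your outline is a reasonable high-level sketch of the actual Bestvina--Feighn argument, and in fact the paper's introduction summarizes that argument in much the same terms: construct for each folding path $\gamma$ a projection $\bfproj_\gamma\colon\fc\to\gamma(\I)$ defined via the illegal-turn structure, show $\fproj\circ\bfproj_\gamma$ is a coarsely contracting retraction, and feed this into a thin-triangles/guessing-geodesics criterion. A couple of small corrections: the Bestvina--Feighn projection $\bfproj_\gamma([A])$ is defined in terms of when immersed loops representing $A$ first acquire a long \emph{legal} segment along the folding path, not quite ``the first time the representative crosses illegal turns'' as you phrased it; and the thick-part symmetry of \Cref{lem: symmetric_in_thick} is not really an ingredient in the hyperbolicity proof---the estimates there are combinatorial and take place in $\fc$, so the asymmetry of $d_\os$ is not the obstacle you suggest.
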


\paragraph{The primitive loop complex.}
For our proof of \Cref{thm:contraction_impies_progress}, it is more natural to work with a different complex that is nevertheless quasi-isometric to $\F$. Recall that an element $\alpha\in \free$ is \define{primitive} if it is part of some free basis of $\free$. Thus $\alpha$ is primitive if and only if $\alpha$ generates a cyclic free factor of $\free$. We use the terminology \define{primitive loop} to mean a conjugacy class of $\free$ consisting of primitive elements. The \define{primitive loop complex} $\pl$ is then defined to be the simplicial graph whose vertices are primitive loops and where two vertices are joined by an edge in $\pl$ if their respective conjugacy classes have representatives that are jointly part of a free basis of $\free$. It is straightforward to show that the natural inclusion map $\pl^0 \to \fc^0$ (each primitive conjugacy class is itself a free factor) is $2$--biLipschitz. Since the image is $1$--dense, this map is in fact a $2$--quasi-isometry. 

Relating Outer space to the primitive loop graph, we define the \define{projection} $\plproj: \X \to \pl$ in the following way: For $G \in \X$, set
\[\plproj(G) \colonequals \{\alpha \in \pl : \ell(\alpha|G) \le 2   \}. \]
This is, of course, closely related to the projection $\fproj\colon \X \to \F$ defined by Bestvina and Feighn in \cite{BFhyp} sending $G \in \X$ to the collection of free factors corresponding to proper core subgraphs of $G$. They prove that $\diam_\fc(\fproj(G))\le 4$ \cite[Lemma 3.1]{BFhyp} and that $ \diam_\fc(\alpha \cup \fproj(G))\le 6\len(\alpha\vert G) + 13$ \cite[Lemma 3.3]{BFhyp} for every $G\in \os$ and every primitive conjugacy class $\alpha$.
These estimates imply that $\plproj$ and $\fproj$ coarsely agree under the $2$--quasi-isometry $\pl\to\fc$ defined above. Combining with the fact that $\fproj$ is coarsely Lipschitz 
\cite[Corollary 3.5]{BFhyp}
this moreover gives the existence of a constant $\lipconst \ge 1$ such that 
\[d_\pl(G,H) \colonequals \diam_\pl(\plproj(G)\cup\plproj(H)) \le \lipconst\, d_\os(G,H) + \lipconst\]
for all $G,H\in \os$. That is, the projection $\plproj\colon \X \to \pl$ is coarsely $\lipconst$--Lipschitz. It is easily computed that $\lipconst \le 260$, but we prefer to work with the symbol $\lipconst$ for clarity.

\section{Strongly contracting geodesics} \label{sec: strong_contract}

Suppose that $\gamma\colon \I\to \os$ is a (directed) geodesic. Then for any point $H\in \os$ we write $d_\os(H,\gamma) = \inf\{d_\os(H,\gamma(t)) \mid t\in \I \}$ for the infimal distance from $H$ to $\gamma$. The \define{closest point projection of $H$ to $\gamma$} is then defined to be the set
\[ \cp_\gamma(H) \colonequals \{ \gamma(t) \mid t\in \I \text{ such that } d_\os(H,\gamma(t)) = d_\os(H,\gamma)\} \subset \os.\]
Note that $\pi_\gamma(H)$ could in principle have infinite diameter: due to the asymmetry of $d_\os$, the directed triangle inequality does not in general allow one to bound $d_\os(\gamma(s),\gamma(t))$ for times $s < t$ with $\gamma(s),\gamma(t)\in \pi_\gamma(H)$. On the other extreme, it is conceivable that the above infimum need not be realized (since $d_\os(H,\gamma(s))$ could remain bounded as $s\to -\infty$), in which case $\cp_\gamma(H)=\emptyset$ by definition. 

\begin{definition}[Strongly contracting]
\label{def:strongly-contracting}
A geodesic $\gamma\colon \I\to \os$ is \define{$D$--strongly contracting} if for all points $H,H'\in\os$ satisfying  $d_\os(H,H') \le d_\os(H,\gamma)$ we have
\begin{itemize}
\item $\displaystyle \diam_{\os} \left(\cp_\gamma(H) \cup \cp_\gamma(H')\right) \le D$, and 
\item $\cp_\gamma(H) = \emptyset$ if and only if $\cp_\gamma(H') = \emptyset$.
\end{itemize}
We say that $\gamma$ is strongly contracting if it is $D$--strongly contracting for some $D >  0$.
\end{definition}

We remark that the second condition is a natural extension of the first:  $\cp_\gamma(H) = \emptyset$ only if there is a sequence $s_i\in \I$ tending to $\Imin=-\infty$ with $d_\os(H,\gamma(s_i))$ limiting to $d_\os(H,\gamma)$. In this case, one should morally view $\cp_\gamma(H)$ as being ``$\gamma(-\infty)$''; hence $\diam_\os(\cp_\gamma(H)\cup\cp_\gamma(H'))$ is considered to be infinite unless $\cp_\gamma(H') = \emptyset$ as well.

While $\cp_\gamma(H)$ may in principle be empty, our first lemma shows that the closest point projection $\cp_\gamma(H)$ always exists when $\gamma$ is strongly contracting:

\begin{lemma}\label{lem:projection_exists}
If $\gamma\colon \I\to \os$ is $D$--strongly contracting, then $\cp_\gamma(H)$ is nonempty for all $H\in\os$.
\end{lemma}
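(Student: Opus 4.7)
The plan is to proceed by contradiction. Suppose $\cp_\gamma(H)=\emptyset$ and set $m:=d_\os(H,\gamma)$. The continuous function $t\mapsto d_\os(H,\gamma(t))$ attains its infimum on any compact subinterval, so if $m$ is not attained globally, any minimizing sequence $t_n\in\I$ must escape an endpoint of $\I$. The ordered triangle inequality, applied at a fixed $t_0\in\I$, gives
\[
d_\os(H,\gamma(t))\;\ge\;d_\os(\gamma(t_0),\gamma(t))-d_\os(\gamma(t_0),H)\;=\;(t-t_0)-d_\os(\gamma(t_0),H)
\]
for $t>t_0$, which forces $d_\os(H,\gamma(t))\to\infty$ as $t\to\Ipl$ and so rules out that endpoint. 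Hence there exists a sequence $t_n\to\Imin=-\infty$ with $d_\os(H,\gamma(t_n))=m+\delta_n$ for some $\delta_n\searrow 0$ (consistent with the remark following Definition \ref{def:strongly-contracting}).

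Next I would manufacture approximate closest points to feed into the strong contracting hypothesis. For each $n$ choose a geodesic $\lambda_n$ from $H$ to $\gamma(t_n)$, of length $m+\delta_n$, and set $H_n:=\lambda_n(m)$, the point at forward distance exactly $m$ from $H$ along $\lambda_n$. Then $d_\os(H,H_n)=m=d_\os(H,\gamma)$, so the strong contracting definition applies to the pair $(H,H_n)$; since $\cp_\gamma(H)=\emptyset$, the second bullet of Definition \ref{def:strongly-contracting} forces $\cp_\gamma(H_n)=\emptyset$ as well. Reading off lengths along $\lambda_n$ gives $d_\os(H_n,\gamma(t_n))=\delta_n$, and hence $d_\os(H_n,\gamma)\to 0$. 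So the sequence $\{H_n\}$ lies in the forward ball $\overline{B}(H,m)$ and has forward distance to $\gamma$ tending to $0$, yet each $H_n$ still has empty closest-point set.

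The hard part, and the main obstacle, is to convert this accumulation of near-projections into a genuine contradiction. My preferred approach is an Arzel\`a--Ascoli-style compactness argument: the segments $\lambda_n$ all emanate from the fixed point $H$ with uniformly bounded length $m+\delta_n\le m+1$, so difference-of-marking maps along $\lambda_n$ have uniformly bounded Lipschitz constant out of $H$; from this I would extract a subsequential limit geodesic $\lambda\colon[0,m]\to\os$ based at $H$, and then use continuity of the length functions defining the topology on $\os$ together with $d_\os(H_n,\gamma(t_n))\to 0$ to argue $\lambda(m)\in\gamma(\I)$, producing a genuine point of $\cp_\gamma(H)$ and the desired contradiction. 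The delicate issue is that $\os$ is not proper under either $d_\os$ or $\dsym$ since points can degenerate by having injectivity radius shrink to $0$, so justifying the compactness step honestly may require showing that the $H_n$ stay in some uniform thick part of $\os$; absent that, one can attempt a fallback in which the $H\mapsto H_n$ construction is iterated, applying strong contracting to $(H_n,H'_n)$ with $d_\os(H_n,H'_n)\le d_\os(H_n,\gamma)\to 0$ and using continuity of length functions to eventually force the infimum to be realized. In either case, the contradiction is that we have produced an actual closest point in $\cp_\gamma(H)$, contrary to our assumption.
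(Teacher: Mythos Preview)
Your reduction to the case $\Imin=-\infty$ with a minimizing sequence $t_n\to-\infty$ is correct, and the construction of the points $H_n$ with $d_\os(H,H_n)=m$, $\cp_\gamma(H_n)=\emptyset$, and $d_\os(H_n,\gamma(t_n))=\delta_n\to 0$ is fine. The gap is exactly where you flag it, and neither of your proposed fixes closes it. For the Arzel\`a--Ascoli approach: the forward ball of radius $m$ about $H$ is not precompact, and there is no reason the $H_n$ remain in any thick part---they are $\delta_n$--close (in forward distance) to $\gamma(t_n)$ with $t_n\to -\infty$, and nothing prevents $\gamma(t_n)$ from leaving every thick part. Indeed, the thickness of strongly contracting geodesics is one of the main conclusions of the paper (\Cref{prop:left_thick}), proved well \emph{after} this lemma and using it; you cannot invoke it here. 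Even if a limit $H_\infty$ existed, you would still need $H_\infty\in\gamma(\I)$, and $d_\os(H_n,\gamma(t_n))\to 0$ with $t_n\to-\infty$ does not place any limit point on the closed set $\gamma(\I)$. The iteration fallback is too vague: repeatedly producing points with empty projection and ever-smaller forward distance to $\gamma$ does not by itself yield a contradiction, since all those points may simply escape $\os$ along with $\gamma(t_n)$.

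The paper's proof takes a completely different route that avoids compactness in $\os$ altogether. It is a connectedness argument. First, a purely local computation shows that $\cp_\gamma(H)\neq\emptyset$ for every $H$ in some open neighborhood $V$ of $\gamma(\I)$: for $H$ symmetrically close to a fixed $\gamma(t)$, the infimum $d_\os(H,\gamma)$ is realized on a compact subinterval of $\I$ around $t$. Second, for arbitrary $H$, choose any path $\mu$ from $H$ to a point of $\gamma(\I)$. The second bullet of the strong contracting definition says that emptiness of $\cp_\gamma$ is locally constant: each point of $\mu$ has a neighborhood on which $\cp_\gamma$ is either everywhere empty or everywhere nonempty. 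Since $\mu$ is connected and its terminal portion lies in $V$ (where projections are nonempty), projections are nonempty along all of $\mu$, in particular at $H$. The key idea you are missing is to use the second bullet to \emph{propagate} nonemptiness along a connected set, rather than trying to construct a projection of $H$ directly.
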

\begin{proof}
Let us first show that $\cp_\gamma(H)\neq\emptyset$ for all $H$ in an open neighborhood of $\gamma(\I)$. 
Let $t\in \I$ be arbitrary and let $U\subset \I$ be an open neighborhood of $t$ whose closure $\overline{U}$ is a compact, proper subinterval of $\I$. Then there exists $C > 0$ such that $\abs{s-t} \ge C$ and consequently $\dsym(\gamma(t),\gamma(s))\ge C$ for all $s\in \I\setminus U$. In particular, we have $\gamma(s)\ne \gamma(t)$ and thus $d_\os(\gamma(t),\gamma(s)) > 0$ for all $s\in \I\setminus U$. Moreover, it is easily shown that the infimum
\[\epsilon_t = \inf\big\{d_\os(\gamma(t),\gamma(s)) : s\in \I\setminus U\big\}\]
is in fact positive. 

Now consider any point $H$ in the open neighborhood $V_t = \{y\in \os : \dsym(y,\gamma(t)) < \epsilon_t/3\}$ of $\gamma(t)$. If $d_\os(H,\gamma(s))\le\epsilon_t/2$ for some $s\in \I\setminus U$, the triangle inequality would give
\[d_\os(\gamma(t),\gamma(s)) \le \dsym(\gamma(t),H) + d_\os(H,\gamma(s)) < \epsilon_t/3 + \epsilon_t/2 <  \epsilon_t,\]
which is impossible by definition of $\epsilon_t$. Hence $d_\os(H,\gamma(s))\ge \epsilon_t/2$ for all $s\in \I\setminus U$. Since $d_\os(H,\gamma)\le d_\os(H,\gamma(t)) < \epsilon_t/3$, it follows that 
\[d_\os(H,\gamma) = \inf\{i\in \I : d_\os(H,\gamma(i))\} = \inf\{s\in \overline{U} : d_\os(H,\gamma(s))\}.\]
The above infimum is necessarily realized by compactness; thus we conclude $d_\os(H,\gamma) = d_\os(H,\gamma(s))$ for some $s\in \overline{U}$. This proves that $\cp_\gamma(H)\neq\emptyset$ for all points $H$ in the open neighborhood $V = \cup_{t\in\I} V_t$ of $\gamma(\I)$. Note that we have not yet used the assumption that $\gamma$ is strongly contracting.

Let us now employ strong contraction to complete the proof of the lemma. Let $H\in \os$ be arbitrary; we may assume $H\notin \gamma(\I)$ for otherwise the claim is obvious. Choose any path $\mu\colon [a,b]\to\os$ from $\mu(a)=H$ to some point $\mu(b)\in \gamma(\I)$. By restricting to a smaller interval if necessary, we may additionally assume that $\mu(s)\notin \gamma(\I)$ for all $a \le s < b$. Since $\cp_\gamma(G)$ is nonempty for all $G$ in a neighborhood of $\gamma(\I)$, there exists some $c \in (a,b)$ such that $\cp_\gamma(\mu(c))$ is nonempty.

Now, for each $t\in [a,c]$ we have $\mu(t)\notin\gamma(\I)$. In fact we claim that $d_\os(\mu(t),\gamma) > 0$: otherwise, as above, there would exist a sequence $s_i\in \I$ with $d_\os(\mu(t),\gamma(s_i)) \to 0$ and consequently $\gamma(s_i)\to \mu(t)$, contradicting the fact that $\gamma(\I)$ is closed. Thus for each $t\in [a,c]$ we may find an open neighborhood $W_t\subset \os$ of $\mu(t)$ such that $d_\os(\mu(t),G')\le d_\os(\mu(t),\gamma)$ for all $G'\in W_t$. 
By compactness, there is a finite subcollection $W_1,\dotsc,W_k$ of these open sets that cover $\mu([a,c])$. 
For each $i$ the strongly contracting condition now implies that  either $\cp_\gamma(G')= \emptyset$ for all $G'\in W_i$, or else $\cp_\gamma(G')\ne \emptyset$ for all $G'\in W_i$. Since these $W_1,\dotsc,W_k$ cover the connected set $\mu([a,c])$ and at least one $W_i$ falls into the latter category (namely, the set $W_i$ containing $\mu(c)$), the contingency ``$\cp_\gamma(G')\ne\emptyset$ for all $G'\in W_i$'' must in fact hold for every $i$. In particular, we see that $\cp_\gamma(H) = \cp_\gamma(\mu(a))$ is nonempty, as claimed.
\end{proof}

We will also need the following basic observation showing that the projection of a connected set to a $D$--strongly contracting geodesic $\gamma$ is effectively ``$D$--connected'' in $\gamma(\I)$:

\begin{lemma}\label{lem:dense_projection}
Let $\gamma\colon \I\to \os$ be $D$--strongly contracting and let $[a,b]\subset \I$ be any subinterval with $\abs{a-b} \ge D$. If $A\subset \os$ is connected and $\cp_\gamma(A)$ misses $\gamma([a,b])$ (that is $\cp_\gamma(A)\cap \gamma([a,b]) = \emptyset)$, then $\cp_\gamma(A)$ is either entirely contained in $\gamma(\I\cap (-\infty,a))$ or entirely contained in $\gamma(\I\cap(b,\infty))$.

\end{lemma}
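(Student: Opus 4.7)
The plan is to partition the connected set $A$ into two relatively open subsets according to which side of the gap $\gamma([a,b])$ the closest-point projection lies, and then invoke connectedness. Let
\[
A_- = \{H \in A : \cp_\gamma(H) \subset \gamma(\I \cap (-\infty, a))\}, \quad A_+ = \{H \in A : \cp_\gamma(H) \subset \gamma(\I \cap (b, \infty))\}.
\]
The decomposition $A = A_- \sqcup A_+$ will follow from three observations: $\cp_\gamma(H)$ is nonempty by \Cref{lem:projection_exists}; applying \Cref{def:strongly-contracting} with $H' = H$ gives $\diam_\os(\cp_\gamma(H)) \le D$, and since $d_\os(\gamma(s), \gamma(t)) = t - s$ for $s < t$ no projection can contain two points whose parameter-difference exceeds $D \ge b - a$; and by hypothesis $\cp_\gamma(H) \cap \gamma([a,b]) = \emptyset$.

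The key step is to show $A_-$ (and symmetrically $A_+$) is open in $A$. If $H \in A_- \setminus \gamma(\I)$, then $r \colonequals d_\os(H, \gamma) > 0$ (as established inside the proof of \Cref{lem:projection_exists} using that $\gamma(\I)$ is closed in the standard topology), so $V = \{H' : \dsym(H, H') < r/2\}$ is an open neighborhood of $H$ on which $d_\os(H, H') \le \dsym(H, H') < r = d_\os(H, \gamma)$. Strong contracting gives $\diam_\os(\cp_\gamma(H) \cup \cp_\gamma(H')) \le D$ for every $H' \in V$, and the same parameter-difference trick as above shows no such $H' \in V \cap A$ can lie in $A_+$, so $V \cap A \subset A_-$. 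The remaining case $H = \gamma(t) \in \gamma(\I)$ with $t < a$ is more delicate because $d_\os(H, \gamma) = 0$ and strong contracting degenerates; here I would argue by sequences. Suppose $H_n \to H$ in $A$ with each $H_n \in A_+$ and choose $\gamma(s_n) \in \cp_\gamma(H_n)$ with $s_n > b$. Since $d_\os(H_n, \gamma(s_n)) = d_\os(H_n, \gamma) \le d_\os(H_n, \gamma(t))$, the directed triangle inequality yields
\[
d_\os(\gamma(t), \gamma(s_n)) \le d_\os(\gamma(t), H_n) + d_\os(H_n, \gamma(s_n)) \le \dsym(H_n, \gamma(t)) \to 0,
\]
while by the forward geodesic property the left side equals $s_n - t > b - t > 0$, a contradiction.

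Once $A_-$ and $A_+$ are both open and disjoint in $A$, connectedness of $A$ forces one to be empty. The main obstacle I expect is precisely the degenerate case $H \in A \cap \gamma(\I)$, where $d_\os(H, \gamma) = 0$ strips the strong contracting hypothesis of its content; the rescue comes from combining the forward-directed identity $d_\os(\gamma(t), \gamma(s_n)) = s_n - t$ with the fact that standard-topology convergence coincides with $\dsym \to 0$, which together force parameters of nearby projections to cluster near $t$.
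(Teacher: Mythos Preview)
Your overall structure matches the paper's: define $A_\pm$, verify $A = A_- \sqcup A_+$, show both pieces are open in $A$, and invoke connectedness. Your treatment of $H \notin \gamma(\I)$ is also essentially identical to the paper's. The discrepancy is in the case $H \in A \cap \gamma(\I)$, where your appeal to ``symmetrically'' conceals a genuine gap.

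Your sequence argument is correct when $H = \gamma(t)$ with $t < a$: given $H_n \in A_+$ converging to $H$ and $\gamma(s_n) \in \cp_\gamma(H_n)$ with $s_n > b$, the triangle inequality yields $d_\os(\gamma(t),\gamma(s_n)) \le \dsym(H_n,\gamma(t)) \to 0$, while the forward geodesic identity (valid because $t < s_n$) gives $d_\os(\gamma(t),\gamma(s_n)) = s_n - t > b - t$, a contradiction. But the case $t > b$, $H_n \in A_-$, $s_n < a$ is \emph{not} symmetric: the same triangle inequality still forces $d_\os(\gamma(t),\gamma(s_n)) \to 0$, yet now $s_n < t$, so the geodesic identity controls $d_\os(\gamma(s_n),\gamma(t)) = t - s_n$ rather than $d_\os(\gamma(t),\gamma(s_n))$. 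Since $d_\os$ is genuinely asymmetric, having $d_\os(\gamma(t),\gamma(s_n)) \to 0$ does not by itself contradict $d_\os(\gamma(s_n),\gamma(t)) \ge t - a > 0$, and your argument stalls. The paper handles $H \in \gamma(\I)$ by a different mechanism: since $H = \gamma(t) \in \os_{2\epsilon}$ for some $\epsilon > 0$, it chooses a $\dsym$-neighborhood $U \subset \os_\epsilon$ and then, for $H' \in U$ and any $G \in \cp_\gamma(H')$, uses the symmetrization constant $\sym_\epsilon$ of \Cref{lem: symmetric_in_thick} to promote the bound $d_\os(H',G) < \delta$ to $\dsym(G,H') < \sym_\epsilon\delta < D/2$. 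This yields $\diam_\os(\cp_\gamma(H)\cup\cp_\gamma(H')) \le D$ uniformly for $H' \in U$, after which the forward parameter argument works regardless of whether $t < a$ or $t > b$.
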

\begin{proof}
Let us first establish the following
\begin{claim}
Each $H\in \os$ admits a neighborhood $U\subset \os$ with $\diam_\os(\cp_\gamma(H)\cup\cp_\gamma(H'))\le D$ for all $H'\in U$.
\end{claim}
To prove the claim, first suppose $H\notin\gamma(\I)$ so that, as above, we have $\delta\colonequals d_\os(H,\gamma) > 0$ (for otherwise there is a sequence in $\gamma(\I)$ converging to $H$). Taking the open neighborhood to be $U = \{y\in \os : d_\os(H,y)<\delta\}$, the strongly contracting condition then ensures $\diam_\os(\cp_\gamma(H),\cp_\gamma(H'))\le D$ for all $H'\in U$. Next suppose $H\in \gamma(\I)$ so that $d_\os(H,\gamma) = 0$ and $\cp_\gamma(H) = \{H\}$. Choosing $\epsilon>0$ so that $H\in \os_{2\epsilon}$, we may then choose $\delta > 0$ sufficiently small so that $\delta\sym_\epsilon < D/2$ and the entire neighborhood $U = \{y\in \os : \dsym(y,H)< \delta\}$ is contained in $\os_{\epsilon}$ (where $\sym_\epsilon$ is the symmetrization constant from \Cref{lem: symmetric_in_thick}). For any $H'\in U$ we then have $d_\os(H',\gamma)\le d_\os(H',H)  < \delta$. Thus any $G\in \cp_\gamma(H')$ satisfies $\dsym(G,H')\le \sym_\epsilon d_\os(H',G) < \sym_\epsilon \delta < D/2$. Note that we also have $\dsym(H,H') < \delta < D/2$. Since $\cp_\gamma(H) = \{H\}$, the triangle inequality therefore shows the desired inequality $\diam_\os(\cp_\gamma(H)\cup\cp_\gamma(H'))\le D$. Since this holds for each $H'\in U$, the claim follows.

We now prove the lemma. Let $[a,b]\subset \I$ and $A\subset \os$ be as in the statement of the lemma, so that $\cp_\gamma(A)$ is disjoint from $\gamma([a,b])$. Since $\cp_\gamma(H)$ is always nonempty (by \Cref{lem:projection_exists}) and satisfies $\diam_\os(\cp_\gamma(H))\le D$, 
the hypothesis on $\cp_\gamma(A)\cap \gamma([a,b])$ implies that each $H\in A$ lies in exactly one of two the sets
\[A_-=\{H\in A : \cp_\gamma(H)\subset \gamma(\I\cap(-\infty,a))\}\qquad\text{or}\qquad A_+=\{H\in A : \cp_\gamma(H)\subset \gamma(\I\cap(b,\infty))\}.\]
Thus $A = A_-\cup A_+$ gives a partition of $A$. Moreover, the claim proves that $A_-$ and $A_+$ are both open. The connectedness of $A$ therefore implies that either $A_-$ or $A_+$ is empty, which is exactly the conclusion of the lemma.
\end{proof}

Finally, we say that a $D$--strongly contracting geodesic $\gamma\colon \I\to \os$ is \define{nondegenerate} if there exists times $s < t$ in $\I$ such that $d_\os(\gamma(t),\gamma(s)) \ge 18D\lipconst$. \Cref{lem:usually_nondegenerate} below shows that this mild symmetry condition automatically holds in most natural situations.

\section{Length minimizers}
\label{sec:length_min}
To control the nearest point projection of a graph $H$ to a geodesic $\gamma$, we must understand where the lengths of conjugacy classes in $\free$ are minimized along $\gamma$. To this end, we introduce the following terminology:  Firstly, given a directed geodesic $\gamma\colon \I\to \os$ and a nontrivial conjugacy class $\alpha\in \free$, we typically write
\[ \minlen_\alpha = \minlen_{\alpha}^{\gamma} \colonequals \inf_{t\in \I} \ell(\alpha\vert \gamma(t))\]
for the infimal length that the conjugacy class attains along $\gamma$. We then regard the set 
\[\minpts_\gamma(\alpha) = \{x\in \gamma(\I) \mid \ell(\alpha\vert x) = \minlen_\alpha\}\]
as the \define{projection of $\alpha$ to $\gamma$}. Since it is possible to have $\minpts_\gamma(\alpha)=\emptyset$ in the case that $\I$ is not compact, we also define a parameterwise-projection 
\[\mintime'_\gamma(\alpha) = \big\{t \mid \exists s_i\in \I \text{ s.t. } s_i\to t \text{ and } \ell(\alpha\vert\gamma(s_i)) \to \minlen_\alpha\} \subset [-\infty,+\infty].\]
For technical reasons, it is convenient to instead work with the following variant:
\[\mintime_\gamma(\alpha) \colonequals \begin{cases}
\mintime'_\gamma(\alpha)\cap \R, & \minpts_\gamma(\alpha)\neq \emptyset\\
\mintime'_\gamma(\alpha),& \text{else}.
\end{cases}\]
Thus $\mintime_\gamma(\alpha)$ is never empty and is exactly the set of parameters $t\in \I$ realizing $\minlen_\alpha$ when $\minpts_\gamma(\alpha)$ is nonempty. 
Note also that $\gamma(\mintime_\gamma(\alpha)\cap \R) = \minpts_\gamma(\alpha)$ in all cases.

As indicated above, we think of $\minpts_\gamma$ as a projection from the set of conjugacy classes in $\free$ onto $\gamma$. The next lemma shows that for strongly contracting geodesics, $\minpts_\gamma$ is compatible with closest-point projection $\cp_\gamma$ in the sense that graphs $H\in \os$ and embedded loops in $H$ often coarsely project to the same spot. First, notice that every metric graph in $\os$ contains an embedded loop of length at most $\nicefrac{2}{3}$, and that this loop necessarily defines a primitive conjugacy class of $\free$.

\begin{lemma}[Projections agree]
\label{lem:projections_agree}
Let $\gamma\colon \I\to \os$ be a $D$--strongly contracting geodesic. Suppose that $H\in \os$ is such that $d_\os(H,\gamma)\ge \log(3)$. Then for every conjugacy class $\alpha$ corresponding to an embedded loop in $H$ with $\len(\alpha\vert H) \le \nicefrac{2}{3}$, we have that $\minpts_\gamma(\alpha)\neq\emptyset$ and that 
\[\diam_\os(\minpts_\gamma(\alpha)\cup\cp_\gamma(H))\le D.\]
Moreover, there exists a primitive conjugacy class $\alpha$ satisfying these conditions.
\end{lemma}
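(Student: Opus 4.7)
The plan is to apply the strong contracting property directly, with a probe graph $H_t\in\os$ constructed from $H$ by shrinking the loop $\alpha$. The arithmetic fits exactly: shrinking $\alpha$ costs at most $\log\frac{1}{1-\len(\alpha|H)}\le\log 3$ in forward Lipschitz distance, matching the hypothesis $d_\os(H,\gamma)\ge\log 3$. The ``moreover'' statement is immediate from the pre-stated observation that every marked graph in $\os$ contains an embedded loop of length $\le 2/3$, and any such embedded loop necessarily represents a primitive conjugacy class.

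For each $t>0$, define $H_t\in\os$ to be the marked graph with the same underlying marked graph as $H$, with each edge of $\alpha$ scaled by $e^{-t}$ and all edges then uniformly rescaled by $1/Z_t$ where $Z_t=\len(\alpha|H)e^{-t}+(1-\len(\alpha|H))$, so $H_t$ has volume $1$. The identity on the underlying graph is a difference of markings from $H$ to $H_t$ with Lipschitz constant $1/Z_t$ (achieved on non-$\alpha$ edges), giving
\[ d_\os(H,H_t)\le\log(1/Z_t)\le\log\tfrac{1}{1-\len(\alpha|H)}\le\log 3\le d_\os(H,\gamma). \]
Applying the $D$-strongly contracting property to the pair $(H,H_t)$ yields $\diam_\os(\cp_\gamma(H)\cup\cp_\gamma(H_t))\le D$ for every $t>0$; combined with \Cref{lem:projection_exists}, this confines every $\cp_\gamma(H_t)$ to a fixed compact symmetric $D$-neighborhood of $\cp_\gamma(H)$.

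Next I identify $\cp_\gamma(H_t)$ with $\minpts_\gamma(\alpha)$ for large $t$. Because $\alpha$ is an embedded simple loop, the only primitive conjugacy class supported entirely on $\alpha$-edges is $\alpha$ itself; hence every candidate $\beta\neq\alpha$ of $H_t$ must use at least one non-$\alpha$ edge, and $\len(\beta|H_t)$ remains bounded below by a positive constant independent of $t$, while $\len(\alpha|H_t)\to 0$. By \Cref{pro: distance}, $d_\os(H_t,\gamma(s))=\max_\beta\log(\len(\beta|\gamma(s))/\len(\beta|H_t))$, and for $s$ in any prescribed bounded subset of $\I$ the $\alpha$-contribution eventually dominates. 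Using the confinement from the previous paragraph (so that only a bounded set of $s$ values matters), a compactness/subsequence argument on points $\gamma(s_t)\in\cp_\gamma(H_t)$ extracts a limit $s_\infty\in\I$ with $\len(\alpha|\gamma(s_\infty))=m_\alpha$, showing $\minpts_\gamma(\alpha)\neq\emptyset$. Conversely, for each $\gamma(s')\in\minpts_\gamma(\alpha)$, choosing $t$ large enough (depending on $s'$) that $\alpha$-dominance holds at $\gamma(s')$ gives $d_\os(H_t,\gamma(s'))=\log(m_\alpha/\len(\alpha|H_t))$, which matches the universal $\alpha$-lower bound $d_\os(H_t,\gamma(s))\ge\log(m_\alpha/\len(\alpha|H_t))$ and therefore forces $\gamma(s')\in\cp_\gamma(H_t)$. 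Since this can be arranged simultaneously for any finite collection of points in $\minpts_\gamma(\alpha)\cup\cp_\gamma(H)$, the diameter bound $\le D$ on every such pair follows from the contraction estimate on $\cp_\gamma(H)\cup\cp_\gamma(H_t)$.

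The main obstacle is making the $\alpha$-dominance uniform enough to run these steps: non-$\alpha$ candidates contribute bounded amounts only pointwise in $s$, but the a priori confinement $\cp_\gamma(H_t)\subset N_D(\cp_\gamma(H))$ coming from strong contraction restricts the relevant parameters to a compact region and restores uniformity there. The compactness/subsequence step is also needed because $m_\alpha$ need not be realized on $\I$ when $\I$ is unbounded; the confinement is what ensures the minimizing parameters of $d_\os(H_t,\gamma(\cdot))$ remain in a uniform compact region whose limit point realizes $m_\alpha$.
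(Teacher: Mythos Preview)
Your proof is correct and follows essentially the same approach as the paper: both shrink the embedded loop $\alpha$ to produce a one-parameter family $H_\sigma$ with $d_\os(H,H_\sigma)\le\log 3$, invoke strong contraction to get $\diam_\os(\cp_\gamma(H)\cup\cp_\gamma(H_\sigma))\le D$, and then argue that for $\sigma$ small the candidate $\alpha$ dominates so that $\minpts_\gamma(\alpha)\subset\cp_\gamma(H_\sigma)$. The only substantive difference is in establishing $\minpts_\gamma(\alpha)\neq\emptyset$: the paper argues by contradiction (if $m_\alpha$ were only approached at $\pm\infty$, one could choose $\epsilon>0$ so that any $s$ with $\ell(\alpha\vert\gamma(s))<m_\alpha+\epsilon$ satisfies $|s-t|>2D$, then force $\cp_\gamma(H_\sigma)$ to land at such an $s$), whereas you exploit the confinement of $\cp_\gamma(H_t)$ to a fixed bounded parameter interval to get uniform $\alpha$-dominance there and extract a minimizer by compactness. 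Both routes are valid; the paper's contradiction argument is slightly more economical since it only requires $\alpha$-dominance at a single point $s_0$ rather than uniformly over the confined region.
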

\begin{proof}
\Cref{lem:projection_exists} ensures the existence of a time $t\in \I$ so that $d_\os(H,\gamma) = d_\os(H,\gamma(t))$.  Write $G= \gamma(t)$. Let $\alpha$ be a conjugacy class corresponding to an embedded loop in $H$ with $\len(\alpha\vert H)\le \nicefrac{2}{3}$. 

To prove that $\alpha$ satisfies the conclusion of the lemma, for $0 < \sigma \leq 1$, let $H_\sigma$ denote the metric graph obtained from $H$ by scaling the edges comprising $\alpha\vert H\subset H$ by $\sigma$ and scaling all other edges by $\frac{1-\sigma\len(\alpha\vert H)}{1-\len(\alpha\vert H)}$ (so as to maintain $\vol(H_\sigma) = 1$). It follows that
\[d_\os(H,H_\sigma) \le \log\left(\frac{1- \sigma \len(\alpha\vert H)}{1 - \len(\alpha\vert H)}\right) \le \log\left(\frac{1}{1 - \len(\alpha\vert H)}\right) \le \log(3).\]
Since $\gamma$ is $D$--strongly contracting and $d_\os(H,\gamma)\ge \log(3)$ by hypothesis, we may conclude that $\diam_\os(\cp_\gamma(H)\cup\cp_\gamma(H_\sigma))\le D$ for all $0 < \sigma \le 1$.

First suppose that $\minpts_\gamma(\alpha)\neq\emptyset$. Letting $B\in \minpts_\gamma(\alpha)$ be arbitrary, we then have  $\len(\alpha\vert B) = \minlen_\alpha$. Let $\mathcal{C}$ be the set of candidates of $H$; this is also the set of candidates for each $H_\sigma$. Since $\alpha$ is embedded in $H$, it is the only candidate whose edge lengths all tend to zero as $\sigma\to 0$. Thus for every candidate $z\neq \alpha\in \mathcal{C}$, there is a positive lower bound on $\len(z\vert H_\sigma)$ as $\sigma\to 0$. On the other hand, $\len(\alpha\vert H_\sigma)$ clearly tends to zero as $\sigma \to 0$. 
For $\sigma >0$ sufficiently small, \Cref{pro: distance} therefore gives
\[\log\left(\frac{\minlen_\alpha}{\len(\alpha\vert H_\sigma)}\right) = \log\left(\frac{\len(\alpha\vert B)}{\len(\alpha\vert H_\sigma)}\right) = \log\left(\max_{z\in \mathcal{C}} \frac{\len(z \vert B)}{\len(z\vert H_\sigma)}\right) = d_\os(H_\sigma,B).\]
The fact that $\minlen_\alpha$ is the minimal length achieved by $\alpha$ along $\gamma$ moreover implies that $d_\os(H_\sigma,\gamma) \ge \log(\minlen_\alpha / \ell(\alpha\vert H_\sigma))$. Therefore $d_\os(H_\sigma, B) = d_\os(H_\sigma,\gamma)$, and so we may conclude $B\in \cp_\gamma(H_\sigma)$. This shows that $\minpts_\gamma(\alpha)\subset \cp_\gamma(H_\sigma)$ and therefore that $\diam_\os(\minpts_\gamma(\alpha)\cup\cp_\gamma(H))\le D$  in the case that $\minpts_\gamma(\alpha)$ is nonempty.

It remains to rule out the possibility that $\minpts_\gamma(\alpha)$ is empty. Since in this case the infimal length $\minlen_\alpha$ is \emph{only} achieved by sequences of times tending to $\pm\infty$, we may choose $\epsilon > 0$ sufficiently small so that for any $s\in \I$ we have the implication
\[\len(\alpha\vert \gamma(s)) < \minlen_\alpha + \epsilon \implies \abs{s-t} > 2D.\]
Let us choose such a time $s_0\in\I$ with $\len(\alpha\vert \gamma(s_0)) < \minlen_\alpha + \epsilon$. As above, by taking $\sigma > 0$ sufficiently small we may be assured that 
\[d_\os(H_\sigma,\gamma(s_0)) =  \log\left(\max_{z\in \mathcal{C}} \frac{\len(z\vert \gamma(s_0))}{\len(z\vert H_\sigma)}\right) =  \log\left(\frac{\len(\alpha\vert \gamma(s_0))}{\len(\alpha\vert H_\sigma)}\right) .\]
Since $\cp_\gamma(H_\sigma)$ is nonempty by \Cref{lem:projection_exists}, there exists a time $s\in \I$ for which $\gamma(s)\in \cp_\gamma(H_\sigma)$. Since this is a \emph{closest} point on $\gamma$ from $H_\sigma$, we necessarily have $d_\os(H_\sigma,\gamma(s))\le d_\os(H_\sigma,\gamma(s_0))$ which in turn requires $\len(\alpha\vert \gamma(s)) \le \len(\alpha\vert \gamma(s_0)) < m_\alpha+\epsilon$. By the choice of $\epsilon$, this implies $\abs{s-t} > 2D$ and consequently $\diam_\os(\gamma(s),\gamma(t))> 2D$. However, since $\gamma(s)\in \cp_\gamma(H_\sigma)$ and $\gamma(t) \in \cp_\gamma(H)$, this contradicts the fact that $\diam_\os(\cp_\gamma(H)\cup\cp_\gamma(H_\sigma))\le D$ for all $0 < \sigma \le 1$. Therefore $\minpts_\gamma(\alpha)$ cannot be empty, and the lemma holds.
\end{proof}

A priori, it could be that $\minpts_\gamma(\alpha)$ is empty for \emph{every} nontrivial conjugacy class in $\free$ and, in keeping with \Cref{lem:projections_agree}, that all points of $\os$ lie within $\log(3)$ of $\gamma(\I)$. Our next lemma rules out such pathological behavior.

\begin{lemma}[Some projection exists]
\label{lem:one_minimizer_exists}
Let $\gamma\colon\I\to\os$ be a strongly contracting geodesic. Then there exists a primitive conjugacy class $\alpha\in\pl^0$ such that $\minpts_\gamma(\alpha)$ is nonempty and $\minlen_\alpha > 0$.
\end{lemma}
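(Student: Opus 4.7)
The strategy is to invoke \Cref{lem:projections_agree}, which produces $\alpha$ with both desired properties once we can find $H \in \os$ satisfying $d_\os(H,\gamma) \ge \log 3$ and admitting an embedded loop of length at most $\nicefrac{2}{3}$. Rather than searching far from $\gamma$ in the ambient space---which is delicate given the asymmetry of $d_\os$---I would build $H$ by modifying a graph already on $\gamma$: fix any $t_0 \in \I$, set $G_0 = \gamma(t_0)$, and choose an embedded loop $\alpha$ in $G_0$ of length $\ell(\alpha\vert G_0) \le \nicefrac{2}{3}$. Such $\alpha$ exists and is primitive by the observation preceding \Cref{lem:projections_agree}.

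For each $\sigma \in (0,1]$, let $H_\sigma$ be the graph obtained from $G_0$ by shrinking the edges of $\alpha\vert G_0$ by the factor $\sigma$ and rescaling the other edges to preserve $\vol(H_\sigma) = 1$, exactly as in the proof of \Cref{lem:projections_agree}. Then $\alpha$ is still an embedded loop in $H_\sigma$, now of length $\sigma\ell(\alpha\vert G_0)$, and since $\alpha$ is a candidate of $H_\sigma$, \Cref{pro: distance} yields
\begin{equation*}
d_\os(H_\sigma,\gamma(t)) \;\ge\; \log\!\left(\frac{\ell(\alpha\vert\gamma(t))}{\sigma\ell(\alpha\vert G_0)}\right) \qquad\text{for all }t\in\I.
\end{equation*}
Taking the infimum over $t$, I obtain $d_\os(H_\sigma,\gamma) \ge \log\bigl(\minlen_\alpha / (\sigma\ell(\alpha\vert G_0))\bigr)$.

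If $\minlen_\alpha > 0$, I can choose $\sigma > 0$ small enough that this lower bound exceeds $\log 3$ while simultaneously $\sigma\ell(\alpha\vert G_0) \le \nicefrac{2}{3}$. Applying \Cref{lem:projections_agree} to $H_\sigma$ with its embedded short loop $\alpha$ then gives $\minpts_\gamma(\alpha) \ne \emptyset$; evaluating $\ell(\alpha\vert\cdot)$ at any point of this projection shows $\minlen_\alpha > 0$ automatically, so $\alpha$ satisfies the conclusion.

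The remaining case $\minlen_\alpha = 0$ is where I expect all the difficulty: the above lower bound on $d_\os(H_\sigma,\gamma)$ becomes vacuous, and one must instead locate a different primitive class $\beta$ with $\minlen_\beta > 0$ in order to reduce to the previous case. A natural plan is to iterate the construction by selecting $\gamma(s)$ at which $\ell(\alpha\vert\gamma(s))$ is very small, replacing $\alpha$ with a different embedded short loop $\beta \ne \alpha$ in $\gamma(s)$, and repeating the analysis. The core obstacle is showing that this iterative search must terminate---i.e., ruling out the pathology that every embedded short loop encountered along $\gamma$ has $\minlen = 0$. I expect this ultimately requires a finiteness/compactness argument combining the finite set of candidates at each graph with the strong contraction hypothesis on $\gamma$.
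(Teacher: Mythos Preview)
Your proposal handles the case $\minlen_\alpha > 0$ correctly, but that case is essentially trivial: once you have assumed $\minlen_\alpha > 0$ you are already halfway to the conclusion, and invoking \Cref{lem:projections_agree} to get $\minpts_\gamma(\alpha)\ne\emptyset$ works exactly as you say. The real content of the lemma is the other case, and there your argument is not a proof but a plan. The iterative search you sketch has no termination mechanism: nothing prevents every embedded short loop you encounter along $\gamma$ from again having $\minlen = 0$, so the process could run forever. The ``finiteness/compactness argument'' you anticipate is not supplied, and it is not clear what form it would take, since the set of candidates changes from graph to graph and $\I$ may be noncompact.

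The paper's proof avoids this difficulty entirely by changing the viewpoint: instead of trying to manufacture a point $H$ far from $\gamma$ inside $\os$, it works in the target complex $\pl$. The projection $\plproj(\gamma(\I))$ is a uniform unparameterized quasigeodesic in $\pl$ (Bestvina--Feighn), and since $\pl$ has infinite diameter and is not quasi-isometric to $\Z$, there are vertices of $\pl$ arbitrarily far from this quasigeodesic. Choosing any $H\in\os$ projecting near such a vertex, the coarse Lipschitz bound for $\plproj$ forces $d_\os(H,\gamma)\ge\log 3$, and then \Cref{lem:projections_agree} finishes. This sidesteps the $\minlen_\alpha = 0$ case altogether and requires no induction.
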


\begin{proof}
It suffices to find $H \in \X$ with $d_{\X}(H,\gamma) \ge \log(3)$, for then \Cref{lem:projections_agree} will provide a class $\alpha \in \pl^0$ with $\rho_\gamma(\alpha) \neq \emptyset$. However, note that if $d_\X(H,\gamma) \le \log(3)$, then $d_\pl(H, \gamma) \le \lipconst \cdot \log(3)+\lipconst$, where $\lipconst$ is the coarse Lipschitz constant for the projection $\pi_\pl\colon \X \to \pl$.

By Proposition $9.2$ of \cite{BFhyp} the projection $\pi_\pl(\gamma)$ is a uniform unparameterized quasigeodesic in $\pl$. Since $\pl$ has infinite diameter and is not quasi-isometric to $\Z$ (see, for example, Theorem $9.3$ of \cite{BFhyp}), we can use surjectivity of $\pi_\pl\colon \X\to\pl$ to choose $H \in \X$ with $d_\pl(H,\gamma) > 2\lipconst \cdot \log(3)$. Combining this fact with the observation above completes the proof of the lemma.
\end{proof}

With these basic properties of $\minpts_\gamma$ established, we now turn to the main ingredient in the proof of \Cref{thm:contraction_impies_progress}. Restricting to the primitive conjugacy classes, our construction of $\mintime_\gamma$ (or alternately $\minpts_\gamma)$ thus gives a projection $\mintime_\gamma\colon \pl\to \mathcal{P}(\I)$ for each geodesic $\gamma\colon \I\to \os$. 
Our next lemma shows that $\minpts_\gamma$ is in fact uniformly Lipschitz provided $\gamma$ is strongly contracting. 
Note that this \emph{does not} yet show that the projection $\rho_\gamma$ is a retraction.

\begin{lemma}[$\minpts_\gamma$ is coarsely Lipschitz]
\label{lem:coarse_lip_projection}
Suppose that $\gamma\colon \I\to \os$ is a $D$--strongly contracting geodesic and let $\alpha,\beta\in\pl^0$ be primitive loops. Then $\minpts_\gamma(\alpha)$ is nonempty (so $\mintime_\gamma(\alpha)\subset\R$) and 
\[\displaystyle \diam_\os\big(\minpts_\gamma(\alpha)\cup \minpts_\gamma(\beta)\big) \le D \cdot d_\pl(\alpha,\beta) + D.\]
\end{lemma}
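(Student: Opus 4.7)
The plan is to induct on $n=d_\pl(\alpha,\beta)$, threading the nonemptiness claim and the diameter bound together through the induction. For the base case $n=0$ (so $\alpha=\beta$), we must show $\minpts_\gamma(\alpha)\ne\emptyset$ and $\diam_\os(\minpts_\gamma(\alpha))\le D$, and this is exactly what \Cref{lem:projections_agree} outputs when applied to any marked graph $H\in\os$ carrying $\alpha$ as an embedded loop of length $\le\nicefrac{2}{3}$ and satisfying $d_\os(H,\gamma)\ge\log 3$. To produce such an $H$ in general, I would use \Cref{lem:one_minimizer_exists} to obtain a seed class $\alpha^\ast\in\pl^0$ with $\minpts_\gamma(\alpha^\ast)\ne\emptyset$ and $\minlen_{\alpha^\ast}>0$, and then propagate both of these properties to all of $\pl^0$ via the inductive step below.

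The crux of the argument is the following sharp adjacency bound: for $\alpha,\beta\in\pl^0$ with $d_\pl(\alpha,\beta)\le 1$ and (inductively) $\minlen_\alpha>0$, one has $\diam_\os(\minpts_\gamma(\alpha)\cup\minpts_\gamma(\beta))\le D$. Extend $\{\alpha,\beta\}$ to a free basis of $\free$ and let $H_\theta\in\os$ be the rose in which the $\alpha$- and $\beta$-petals share a common length $\theta$, with the remaining petals distributing the leftover volume equally. Choosing $\theta\le\min\{\nicefrac{2}{9},\,\minlen_\alpha/9\}$, the candidate estimate of \Cref{pro: distance} yields $d_\os(H_\theta,\gamma)\ge\log(\minlen_\alpha/\theta)\ge 2\log 3$. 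Now let $H_{\theta,\sigma}^\alpha$ denote the graph obtained from $H_\theta$ by scaling \emph{only} the $\alpha$-petal by $\sigma\in(0,1]$. A direct length-ratio computation gives $d_\os(H_\theta,H_{\theta,\sigma}^\alpha)\le\log(\nicefrac{9}{7})$ and $\ell(\beta\vert H_{\theta,\sigma}^\alpha)\le\nicefrac{2}{3}$, so the ordered triangle inequality forces $d_\os(H_{\theta,\sigma}^\alpha,\gamma)\ge\log 3$. Since $\alpha$ is the unique candidate of $H_\theta$ whose length in $H_{\theta,\sigma}^\alpha$ tends to zero as $\sigma\to 0$, the internal computation in the proof of \Cref{lem:projections_agree}---the step showing that the shrinking candidate dominates the Lipschitz max---yields $\minpts_\gamma(\alpha)\subset\cp_\gamma(H_{\theta,\sigma}^\alpha)$ for all sufficiently small $\sigma>0$. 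Meanwhile, \Cref{lem:projections_agree} itself, applied to the embedded loop $\beta$ in the graph $H_{\theta,\sigma}^\alpha$, gives $\minpts_\gamma(\beta)\ne\emptyset$ together with $\diam_\os(\minpts_\gamma(\beta)\cup\cp_\gamma(H_{\theta,\sigma}^\alpha))\le D$. Combining these two inclusions, $\minpts_\gamma(\alpha)\cup\minpts_\gamma(\beta)$ sits inside the set $\minpts_\gamma(\beta)\cup\cp_\gamma(H_{\theta,\sigma}^\alpha)$, whose $\os$-diameter is at most $D$.

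The general bound now follows by telescoping along a $\pl$-geodesic $\alpha=\alpha_0,\alpha_1,\dotsc,\alpha_n=\beta$: picking $p_i\in\minpts_\gamma(\alpha_i)$, the adjacency bound gives $\max\{d_\os(p_i,p_{i+1}),d_\os(p_{i+1},p_i)\}\le D$, and summing the ordered triangle inequality along the chain yields $\max\{d_\os(p_0,p_n),d_\os(p_n,p_0)\}\le nD$. For arbitrary $x\in\minpts_\gamma(\alpha)$ and $y\in\minpts_\gamma(\beta)$, setting $p_0=x$ and routing through $p_n$ using $\diam_\os(\minpts_\gamma(\beta))\le D$ for the final leg produces the desired bound $D\cdot d_\pl(\alpha,\beta)+D$. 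The principal technical obstacle I anticipate is the a-priori construction of the comparison rose $H_\theta$ with $d_\os(H_\theta,\gamma)\ge 2\log 3$: this is exactly where the hypothesis $\minlen_\alpha>0$ enters, which is why the induction must propagate nonemptiness and positivity of $\minlen$ in tandem with the diameter bound, using \Cref{lem:one_minimizer_exists} as the sole external input.
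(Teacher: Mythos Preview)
Your proof is correct, and the overall architecture---seed via \Cref{lem:one_minimizer_exists}, then propagate a sharp adjacency bound along edges of $\pl$---matches the paper's. The execution of the adjacency step differs in an interesting way, however.

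The paper proves the adjacency claim symmetrically: it builds a rose $R$ with $\alpha$- and $\beta$-petals and considers a one-parameter family $R_\sigma$ in which the $\alpha$-petal has length $\sigma\delta$ and the $\beta$-petal has length $(1-\sigma)\delta$. At one extreme $\alpha$ dominates and $\minpts_\gamma(\alpha)\subset\cp_\gamma(R_{\sigma_\alpha})$; at the other extreme $\beta$ dominates and $\minpts_\gamma(\beta)\subset\cp_\gamma(R_{\sigma_\beta})$. A single application of the $D$--strongly contracting hypothesis to the step $R_{\sigma_\alpha}\to R_{\sigma_\beta}$ (whose length is independent of $\delta$, hence eventually less than $d_\os(R_{\sigma_\alpha},\gamma)$) then gives the diameter bound. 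Your version is asymmetric: you shrink only the $\alpha$-petal to obtain $H^\alpha_{\theta,\sigma}$, use the internal length-comparison from \Cref{lem:projections_agree} to get $\minpts_\gamma(\alpha)\subset\cp_\gamma(H^\alpha_{\theta,\sigma})$, and then invoke the \emph{statement} of \Cref{lem:projections_agree} for the $\beta$-loop in that same graph. This is an elegant reuse of \Cref{lem:projections_agree} as a packaged subroutine rather than re-deriving its mechanism; the cost is that you must reach inside that proof for the containment $\minpts_\gamma(\alpha)\subset\cp_\gamma(H^\alpha_{\theta,\sigma})$, and you need the extra threshold $\theta\le\minlen_\alpha/9$ (hence the inductive hypothesis $\minlen_\alpha>0$) to guarantee $d_\os(H^\alpha_{\theta,\sigma},\gamma)\ge\log 3$. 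Your numerics check out: $d_\os(H_\theta,H^\alpha_{\theta,\sigma})\le\log\bigl(1/(1-\theta)\bigr)\le\log(9/7)$, so $d_\os(H^\alpha_{\theta,\sigma},\gamma)\ge 2\log 3-\log(9/7)=\log 7>\log 3$, and $\ell(\beta\vert H^\alpha_{\theta,\sigma})\le\theta/(1-\theta)\le 2/7<2/3$. One cosmetic point: in your telescoping you may simply take $p_0=x$ and $p_n=y$ to get $\diam_\os(x,y)\le nD$ directly; the extra $+D$ then covers the $n=0$ case, matching the stated bound.
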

\begin{proof}
The lemma will follow easily from the following fact:
\begin{claim}
If $\minpts_\gamma(\alpha)\neq\emptyset$ and $\beta\in \pl^0$ is adjacent to $\alpha$, then $\minpts_\gamma(\beta)\neq \emptyset$ and $\diam_\os\big(\minpts_\gamma(\alpha)\cup\minpts_\gamma(\beta)\big)\le D$.
\end{claim}

Indeed, since $\pl$ is connected and there exists $\alpha_0\in\pl^0$ with $\minpts_\gamma(\alpha_0)\neq\emptyset$ by \Cref{lem:one_minimizer_exists}, the claim shows that $\minpts_\gamma(\beta)$ is nonempty for every primitive loop $\beta\in\pl^0$. One may thus deduce the lemma by applying the claim inductively with the triangle inequality to obtain the desired bound $\diam_\os(\minpts_\gamma(\alpha)\cup\minpts_\gamma(\beta)) \le D\cdot d_\pl(\alpha,\beta) + D$ for all $\alpha,\beta\in\pl^0$.

It remains to prove the claim.
First, we may choose a free basis $\{e_1,\dotsc,e_n\}$ of $\free$ in which $e_1$ represents the conjugacy class $\alpha$ and $e_2$ represents the conjugacy class $\beta$. Let $(R,g)$ be the marked rose with petals labeled by the basis elements $e_1,\dotsc, e_n$. By \Cref{pro: distance} there is a finite set $\mathcal{C}$ of candidate conjugacy classes represented by immersed loops in $R$ such that for any metric $\ell$ on $R$ the distance to any other point $H\in \os$ is given by
\[d_\os((R,g,\ell),H) = \log\left(\sup_{z\in \mathcal C} \frac{\ell(z \vert H)}{\ell(z \vert (R,g,\ell))}\right).\]
Furthermore, both $\alpha$ and $\beta$ are candidates since they label petals of $R$. 

Choose an arbitrary point $G = \gamma(t) \in \minpts_\gamma(\alpha)$. If $\minpts_\gamma(\beta)$ is empty, then $\mintime_\gamma(\beta) \subset \{-\infty,+\infty\}$ meaning that the infimal length $\minlen_\beta$ of $\beta$ is \emph{only} achieved by sequences of times tending to $\pm\infty$; in which case we may chose $\epsilon_0 > 0$ sufficiently small so that for any $s_0\in \I$ we have the implication
\begin{equation}\label{eqn:epsilon_0_choice}
\len(\beta\vert \gamma(s_0)) < \minlen_\beta + \epsilon_0 \implies \abs{s_0-t}> 2D.
\end{equation}
We now fix a time parameter $s\in \I$ as follows: If $\minpts_\gamma(\beta)\neq\emptyset$, then we choose $s\in \mintime_\gamma(\beta)$ arbitrarily; if $\minpts_\gamma(\beta)= \emptyset$, we instead let $s\in \I$ be any time for which $\len(\beta\vert \gamma(s))< \minlen_\beta+\epsilon_0$. In either case we set $H = \gamma(s)$.

Let us now define 
\begin{align*}
K \colonequals& \max \big\{ \ell(z\vert Y) \mid z\in \mathcal{C}\text{ and }Y\in \{G,H\}\big\} \quad\text{and}\\
k \colonequals& \min \big\{ \ell(z\vert Y) \mid z\in \mathcal{C}\text{ and }Y\in \{G,H\}\big\}
\end{align*}
to be the maximal and minimal lengths achieved by any candidate $z\in \mathcal{C}$ at either $G$ or $H$. Notice that $K\ge k > 0$. Choose a small parameter  $0 <  \delta < \frac{k}{2rK} \le \frac{1}{2}$.
For each $0 < \sigma < 1$ we let $R_\sigma$ denote the marked metric graph $(R,g,\ell_\sigma)$ in which the petals of $R$ corresponding to $\alpha$ and $\beta$ have lengths $\ell(\alpha\vert R_\sigma) = \sigma\delta$ and $\ell(\beta \vert R_\sigma) = (1-\sigma)\delta$, respectively, and every other petal of $R_\sigma$ has length $\frac{1-\delta}{r-2}$ (so that $\vol(R_\sigma) = 1$). Notice that we then have $\ell(z\vert  R_\sigma) \ge \frac{1-\delta}{r-2} \ge \frac{1}{2r}$ for \emph{every} candidate $z\in \mathcal C$ except for the candidates $\alpha$ and $\beta$. 

Let us now estimate the distance from $R_\sigma$ to points along $\gamma(\I)$. 
Firstly, at $G\in \gamma(\I)$ we have $\len(\alpha\vert G) = \minlen_\alpha$ and $\len(z\vert G) \le K$ for all other candidates $z\in \mathcal{C}$. Thus we have 
\begin{align*}
d_\os(R_\sigma, G) =& \log\sup_{z\in \mathcal C} \frac{\ell(z \vert G)}{\ell(z\vert R_\sigma)}
\le \log\sup\left\{ \frac{\len(\alpha\vert G)}{\delta \sigma}, \frac{K}{(1-\sigma)\delta}, 2rK\right\}.
\end{align*}
Since $\len(\alpha\vert G) \ge k$ and $\sigma\delta < \delta < \frac{k}{2rK}$, when $\frac{\sigma}{1-\sigma} < \frac{k}{K}$ the above estimate reduces to
\[d_\os(R_\sigma,G) = \log\left(\frac{\len(\alpha\vert G)}{\sigma\delta}\right) = \log\left(\frac{\minlen_\alpha}{\sigma\delta}\right).\]
Since $\minlen_\alpha$ is the minimal length of $\alpha$ achieved on $\gamma(\I)$, we also have
\begin{equation}\label{eqn:alpha_dist}
d_\os(R_\sigma,\gamma(u)) \ge \log\left(\frac{\len(\alpha\vert \gamma(u))}{\len(\alpha\vert R_\sigma)}\right) \ge \log\left(\frac{\minlen_\alpha}{\sigma\delta}\right) = d_\os(R_\sigma,G)
\end{equation}
for all $u\in \I$. Therefore $G\in \cp_\gamma(R_\sigma)$ whenever $\frac{\sigma}{1-\sigma} < \frac{k}{K}$.  

A similar argument shows that
\begin{equation}\label{eqn:beta_dist}
d_\os(R_\sigma, H)  = \log\sup\left\{\frac{K}{\delta\sigma},\frac{\ell(\beta\vert H)}{(1-\sigma)\delta}, 2rK\right\} = \log\left(\frac{\len(\beta\vert H)}{(1-\sigma)\delta}\right)
\end{equation}
whenever $\frac{1-\sigma}{\sigma} < \frac{k}{K}$. If $\minpts_\gamma(\beta)$ is nonempty, so that $\len(\beta\vert H) = \minlen_\beta$ by choice of $H$, we again conclude $H\in \cp_\gamma(R_\sigma)$ whenever $\frac{1-\sigma}{\sigma}< \frac{k}{K}$. Otherwise, we note that any closest point $H_\sigma \in \cp_\gamma(R_\sigma)$ satisfies $d_\os(R_\sigma,H_\sigma)\le d_\os(R_\sigma,H)$ and consequently $\len(\beta\vert H_\sigma)\le \len(\beta\vert H) < \minlen_\beta + \epsilon_0$ by the choice of $H$.

Let us now specify parameters $0 < \sigma_\alpha < \sigma_\beta < 1$ by the formulas
\[\frac{\sigma_\alpha}{1-\sigma_\alpha} = \frac{k}{2K} \qquad\text{and}\qquad \frac{1-\sigma_\beta}{\sigma_\beta} = \frac{k}{2K}.\]
Setting $R_\alpha = R_{\sigma_\alpha}$ and $R_\beta = R_{\sigma_\beta}$, equation \eqref{eqn:alpha_dist} ensures that $G\in \cp_\gamma(R_\alpha)$. A comparison of lengths of candidates at $R_\alpha$ and $R_\beta$ shows that
\[d_\os(R_\alpha,R_\beta) = \log\left(\frac{\ell(\alpha\vert R_\beta)}{\ell(\alpha\vert R_\alpha)}\right) = \log\left(\frac{\sigma_\beta\delta}{\sigma_\alpha \delta}\right) = \log\left(\frac{2K}{k}\right) \]
and similarly $d_\os(R_\beta,R_\alpha) = \log\left(\frac{1-\sigma_\alpha}{1-\sigma_\beta}\right) = \log(2K/k)$. Since this is independent of $\delta$, by choosing $\delta$ sufficiently small we can ensure that
\[d_\os(R_\alpha,R_\beta) = \log(2K/k) < \log(\minlen_\alpha/\delta \sigma_\alpha) = d_\os(R_\alpha,\gamma).\]
Therefore, the $D$--strongly contracting property implies that
\begin{equation}\label{eqn:diameterbound_for_lipschitz_lemma}
\diam_\os(\cp_\gamma(R_\alpha)\cup \cp_\gamma(R_\beta)) \le D.
\end{equation}

We now finish proving the claim: First consider the case $\minpts_\gamma(\beta)\neq \emptyset$, so that $s\in \mintime_\gamma(\beta)$ by the choice of $s$ and consequently $H = \gamma(s)\in \cp_\gamma(R_\beta)$ by equation \eqref{eqn:beta_dist}. Therefore equation \eqref{eqn:diameterbound_for_lipschitz_lemma} ensures $\diam_\os(H,G)\le D$. Since $G=\gamma(t)\in \minpts_\gamma(\alpha)$ and $H = \gamma(s)\in \minpts_\gamma(\beta)$ were chosen arbitrarily, this proves the claim in the case that $\minpts_\gamma(\beta)$ is nonempty. It remains to rule out the possibility $\minpts_\gamma(\beta) = \emptyset$, in which case our choice of $H = \gamma(s)$ gives $\len(\beta\vert H) < \minlen_\beta + \epsilon_0$. Let $H' = \gamma(s')\in \cp_\gamma(R_\beta)$ be any closest point from $R_\beta$. We then have $d_\os(R_\beta,H')\le d_\os(R_\beta,H)$ which, by equation \eqref{eqn:beta_dist}, in turn implies
\[\len(\beta\vert H') \le \len(\beta \vert H) < \minlen_\beta + \epsilon_0.\]
Our choice of $\epsilon_0$ \eqref{eqn:epsilon_0_choice} then ensures that $\abs{s'- t} > 2D$. However, since $\gamma(s')\in \cp_\gamma(R_\beta)$ and $\gamma(t)\in \cp_\gamma(R_\alpha)$, this contradicts \eqref{eqn:diameterbound_for_lipschitz_lemma}. Thus the contingency $\minpts_\gamma(\beta) = \emptyset$ is impossible and the claim holds.
\end{proof}

\begin{remark}
\Cref{lem:coarse_lip_projection} and the fact that $\plproj\colon \os\to \pl$ is coarsely $\lipconst$--Lipschitz together imply, as in the proof of \Cref{prop:strong_contract_makes_progress} below, that any $D$--strongly contracting geodesic in $\os$ in fact has $D\ge 1/\lipconst$ or else has uniformly bounded diameter. However we will not use this fact going forward.
\end{remark}

\section{The progression of thick, strongly contracting geodesics}
In this section, we prove our main theorem in the case that the geodesic is contained in some definite thick part of $\X$. The arguments in this case are made easier by the fact that we can first prove that the diameter of times for which a fixed conjugacy class has bounded length is uniformly controlled. This is the content of \Cref{lem:thick strong contract}.

\begin{proposition}[Thick strongly contracting geodesics make progress in $\fc$]
\label{prop:strong_contract_makes_progress}For each $D> 0$ and $\epsilon > 0$ there exists a constant $K = K(D,\epsilon) \ge 1$ with the following property. If $\gamma\colon \I\to \os$ is a $D$--strongly contracting geodesic and $\gamma(\I)\subset\os_\epsilon$, then $\fproj\circ\gamma\colon \I\to \fc$ is a $K$--quasigeodesic.
\end{proposition}

Before proving \Cref{prop:strong_contract_makes_progress},  recall that given a directed geodesic $\gamma\colon \I\to \os$ and a nontrivial conjugacy class $\alpha\in \free$, we write $m_\alpha = \inf_{t\in \I} \ell(\alpha\vert \gamma(t))$ for the infimal length that the conjugacy class attains along $\gamma$. In the case of a thick strongly contracting geodesic, the set of times where $\alpha$ is short is controlled as follows:

\begin{lemma}[Transient shortness]\label{lem:thick strong contract}
Suppose that $\gamma\colon \I\to \os$ is a $D$--strongly contracting geodesic with $\gamma(\I)\subset \os_\epsilon$, and set $\epsilon' = \epsilon/(1+2\epsilon\inv)$. Then for every primitive element $\alpha \in \free$ we have
\[\diam_\os\Big\{\gamma(s) : s\in \I \text{ and } \ell(\alpha\vert\gamma(s)) \le m_\alpha + 2\Big\} \le 2\sym_{\epsilon}(1+\sym_{\epsilon'})\log\left(1+\tfrac{2}{\epsilon}\right) + 2D\]
where $\sym_\epsilon$ is the symmetrization constant provided by \Cref{lem: symmetric_in_thick}.
\end{lemma}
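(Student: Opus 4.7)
The plan is to bound $\diam_\X(\{G_1,G_2\})$ for any $G_1,G_2\in S$ by a triangle-inequality chain running through the closest-point projections $\cp_\gamma(H_i)$ of two auxiliary graphs $H_1,H_2$ that are obtained from $G_1,G_2$ by shortening the $\alpha$-loop. Specifically, for each $G_i\in S$ I would build $H_i$ by scaling the edges of $G_i$ traversed by the immersed loop $\alpha|G_i$ down by a factor $\sigma\ge\epsilon/(\epsilon+2)$ (arising as $m_\alpha/\ell(\alpha|G_i)$) and rescaling the remaining edges to preserve unit volume. Because $\ell(\alpha|G_i)\le m_\alpha+2$ and $m_\alpha\ge\epsilon$, a direct computation of the Lipschitz constant of this difference of markings gives $d_\X(G_i,H_i)\le L:=\log(1+2/\epsilon)$, while the uniform lower bound $\sigma\ge\epsilon/(\epsilon+2)$ on the scaling factor guarantees $H_i\in\X_{\epsilon'}$. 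With further tuning one arranges that $\alpha$ (or a short primitive embedded loop $\beta$ of $H_i$ closely associated to $\alpha$ in $\pl$) is realized by an embedded loop of length at most $2/3$ in $H_i$, and that $d_\X(H_i,\gamma)\ge\log 3$, so that \Cref{lem:projections_agree} applies to $H_i$.

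Once the $H_i$ are in hand, the rest is a sequence of triangle inequalities. \Cref{lem:projections_agree} (combined, if needed, with the Lipschitz estimate \Cref{lem:coarse_lip_projection} to transfer from $\beta$ back to $\alpha$) gives $\diam_\X(\minpts_\gamma(\alpha)\cup\cp_\gamma(H_i))\le D$, whence $\diam_\X(\cp_\gamma(H_1)\cup\cp_\gamma(H_2))\le 2D$ by triangulating through $\minpts_\gamma(\alpha)$. Applying the asymmetric-thickness bound \Cref{lem: symmetric_in_thick} inside $\X_{\epsilon'}$ yields $d_\X(H_i,G_i)\le\sym_{\epsilon'}L$, and since $\cp_\gamma(H_i)$ realizes the infimal distance from $H_i$ to $\gamma$ we obtain $d_\X(H_i,\cp_\gamma(H_i))\le d_\X(H_i,G_i)\le\sym_{\epsilon'}L$. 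A triangle inequality then gives $d_\X(G_i,\cp_\gamma(H_i))\le(1+\sym_{\epsilon'})L$, and a second application of \Cref{lem: symmetric_in_thick}, this time inside $\X_\epsilon$, upgrades this to $d_\X(\cp_\gamma(H_i),G_i)\le\sym_\epsilon(1+\sym_{\epsilon'})L$. Concatenating via
\[
d_\X(G_1,G_2)\le d_\X(G_1,\cp_\gamma(H_1))+d_\X(\cp_\gamma(H_1),\cp_\gamma(H_2))+d_\X(\cp_\gamma(H_2),G_2),
\]
together with its reverse and the estimate $1+\sym_\epsilon\le 2\sym_\epsilon$, delivers the advertised bound $2\sym_\epsilon(1+\sym_{\epsilon'})L+2D$ on $\diam_\X(\{G_1,G_2\})$.

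The main obstacle is Step 1, where one must produce $H_i$ satisfying simultaneously $d_\X(G_i,H_i)\le L$, $H_i\in\X_{\epsilon'}$, $d_\X(H_i,\gamma)\ge\log 3$, and the presence of an embedded short primitive loop whose $\minpts_\gamma$-projection is within $D$ of $\minpts_\gamma(\alpha)$. The naive scaling works cleanly when $\alpha$ is already embedded in $G_i$ and its edges occupy a strict fraction of the volume, but the immersed loop $\alpha|G_i$ may traverse some edges with high multiplicity or saturate the volume of $G_i$, or $m_\alpha$ may be too large for $\alpha$ itself to become a short embedded loop; in these cases one must either compose the scaling with an auxiliary deformation through a rose adapted to a free basis containing $\alpha$, or else apply \Cref{lem:projections_agree} to a different short embedded loop $\beta$ of $H_i$ and transfer back to $\alpha$ via \Cref{lem:coarse_lip_projection}. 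The specific constants $\epsilon'=\epsilon^2/(\epsilon+2)$ and $L=\log(1+2/\epsilon)$ are precisely those forced by the worst-case ratio $\ell(\alpha|G_i)/m_\alpha\le 1+2/\epsilon$ and the induced thickness loss under the scaling.
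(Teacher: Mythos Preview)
Your triangle-inequality endgame is correct and matches the paper's: once you have auxiliary points whose closest-point projections to $\gamma$ are within $2D$ of each other and which sit within $L=\log(1+2/\epsilon)$ of $G_1,G_2$ in the forward direction and in $\os_{\epsilon'}$, the symmetrization via \Cref{lem: symmetric_in_thick} yields exactly the constant $2\sym_\epsilon(1+\sym_{\epsilon'})L+2D$.

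The gap is entirely in Step~1, and it is not merely cosmetic. Your scaling construction of $H_i$ need not satisfy $d_\os(G_i,H_i)\le L$: if $V_\alpha$ denotes the total length of the edges in the support of $\alpha\vert G_i$, the complementary edges must be stretched by $(1-\sigma V_\alpha)/(1-V_\alpha)$, which is unbounded as $V_\alpha\to 1$ and undefined when $V_\alpha=1$. You flag this (``saturate the volume''), but the proposed workarounds are not carried out. There is a second problem with routing through \Cref{lem:projections_agree}: that lemma needs $d_\os(H_i,\gamma)\ge\log 3$ and an \emph{embedded} loop of length $\le 2/3$ in $H_i$ whose $\minpts_\gamma$ coincides with $\minpts_\gamma(\alpha)$. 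When $\ell(\alpha\vert G_i)$ is close to $m_\alpha$ your $\sigma$ is close to $1$, so $H_i$ is close to $G_i\in\gamma(\I)$ and the distance hypothesis fails; and in general $\alpha$ need not be embedded in $H_i$, so transferring via \Cref{lem:coarse_lip_projection} would introduce extra additive terms and spoil the constant.

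The paper's construction is precisely your ``rose'' workaround, and it dissolves all of these issues at once. Fix a rose $R$ with $\alpha$ as a petal and let $R_\sigma$ be the metric assigning length $\sigma$ to that petal. For $\sigma$ small, $\alpha$ is the candidate realizing $d_\os(R_\sigma,G_i)$, so along any directed geodesic $\rho$ from $R_\sigma$ to $G_i$ one has $\ell(\alpha\vert\rho(t))=\sigma e^t$. Take $G_i'=\rho(\log(m_\alpha/\sigma))$. Then $d_\os(R_\sigma,G_i')=\log(m_\alpha/\sigma)\le d_\os(R_\sigma,\gamma)$ by definition of $m_\alpha$, so the $D$--contracting hypothesis applies \emph{directly} to the pair $(R_\sigma,G_i')$ and gives $\diam_\os(\cp_\gamma(G_i')\cup\cp_\gamma(R_\sigma))\le D$; triangulating through the common point $\cp_\gamma(R_\sigma)$ yields $\diam_\os(\cp_\gamma(G_1')\cup\cp_\gamma(G_2'))\le 2D$ with no appeal to \Cref{lem:projections_agree} or \Cref{lem:coarse_lip_projection}. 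Meanwhile $d_\os(G_i',G_i)=\log(\ell(\alpha\vert G_i)/m_\alpha)\le L$ is automatic from the geodesic parameterization (no volume renormalization is needed), and this same inequality forces $G_i'\in\os_{\epsilon'}$. From here your own endgame applies verbatim with $G_i'$ in place of $H_i$.
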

\begin{proof}
Suppose that $G,H\in \gamma(\I)$ are points for which $\ell(\alpha\vert G),\ell(\alpha\vert H) \le m_\alpha + 2$. 
Fix a free basis $A =\{e_1,\dotsc,e_r\}$ of $\free$ with $e_1 = \alpha$ and let $(R,g)$ be the marked rose with petals labeled by elements of $A$. Let $\mathcal{C} = \mathcal{C}_R$ denote the finite set of candidates of $R$. For each $0 < \sigma < \nicefrac{1}{2}$, let $R_\sigma\in \os$ denote the marked metric graph $(R,g,\ell_\sigma)$ in which the petal labeled $\alpha$ has $\ell(\alpha\vert R_\sigma) = \sigma$ and every other petal has length $(1-\sigma)/(r-1)$. Notice that $\mathcal{C}$ is the set of candidates for each metric graph $R_\sigma$, and that we moreover have $\ell(z\vert R_\sigma) \ge (1-\sigma)/(r-1)\ge \frac{1}{2r}$ for \emph{every} candidate $z\in \mathcal{C}$ except for $\alpha$. 

We henceforth suppose our parameter satisfies $\sigma < m_\alpha$. By definition of $m_\alpha$ we thus have $\ell(\alpha\vert \gamma(t))/\ell(\alpha\vert R_\sigma)\ge \frac{m_\alpha}{\sigma}$ for all $t\in \I$; hence $d_\os(R_\sigma,\gamma) \ge \log(m_\alpha/\sigma)$. Now consider the maximum length
\[M = \max_{z\in \mathcal{C}}\left\{\ell(z\vert G), \ell(z\vert H)\right\}\]
achieved by any candidate at the two points $G,H$. If $\sigma$ is additionally chosen so that $\sigma < m_\alpha/(2rM)$, then we see that for all candidates $\alpha\neq z\in \mathcal{C}$ we have
\[\frac{\ell(\alpha\vert G)}{\ell(\alpha\vert R_\sigma)}\ge \frac{m_\alpha}{\sigma} \ge \frac{M}{\nicefrac{1}{2r}} \ge \frac{\ell(z\vert G)}{\ell(z\vert R_\sigma)}
\quad\text{and}\quad
\frac{\ell(\alpha\vert H)}{\ell(\alpha\vert R_\sigma)}\ge \frac{m_\alpha}{\sigma} \ge \frac{M}{\nicefrac{1}{2r}} \ge \frac{\ell(z\vert H)}{\ell(z\vert R_\sigma)}.\]
It now follows from \Cref{pro: distance} that
\[\log\left(\frac{m_\alpha}{\sigma}\right) \le d_\os(R_\sigma,\gamma) \le 
\left\{\begin{array}{c} 
d_\os(R_\sigma,G) = \log\frac{\ell(\alpha\vert G)}{\ell(\alpha\vert R_\sigma)}\vspace{3pt}\\
d_\os(R_\sigma,H) = \log\frac{\ell(\alpha\vert H)}{\ell(\alpha\vert R_\sigma)}
\end{array}\right\}\le \log\left(\tfrac{m_\alpha + 2}{\sigma}\right).\]

Choose a directed geodesic $\rho\colon [0,K] \to \os$ from $R_\sigma$ to $G$, where $K = d_{\X}(R_\sigma,G)$. Since $e^{d_\os(R_\sigma,G)} = \frac{\ell(\alpha\vert G)}{\ell(\alpha\vert R_\sigma)}$ and $\rho$ is a geodesic for the Lipschitz metric, it follows that $\ell(\alpha\vert \rho(t)) = \sigma e^t$ for all $t\in [0,K]$. Hence if we define $G' = \rho\left(\log(m_\alpha / \sigma)\right)$ (so that $\ell(\alpha\vert G') = m_\alpha$), we see that
\begin{align} \label{eq: distance_along}
d_\os(R_\sigma,G') = \log\left(\tfrac{m_\alpha}{\sigma}\right) \le d_\os(R_\sigma,\gamma)\quad\text{and}\quad d_\os(G',G) = \log\left(\tfrac{\ell(\alpha\vert G)}{m_\alpha}\right) \le \log\left(1 + \tfrac{2}{\epsilon}\right).
\end{align}
Defining $H'$ (on the geodesic from $R_\sigma$ to $H$) similarly, we obtain analogous inequalities for $H'$.
By the strongly contracting condition, the first inequality of (\ref{eq: distance_along}) shows that $\diam_\os(\pi_\gamma(G')\cup \pi_\gamma(R_\sigma))$ and similarly $\diam_\os(\pi_\gamma(H') \cup \pi_\gamma(R_\sigma))$ are both bounded by $D$. Whence
\begin{equation}\label{eqn:close_projections}
\diam_\os\big(\pi_\gamma(G')\cup\pi_\gamma(H')\big) \le 2D.
\end{equation}
On the other hand, the second inequality of (\ref{eq: distance_along}) shows $G',H'\in \os_{\epsilon'}$, where $\epsilon' = \epsilon/(1+2\epsilon\inv)$. Therefore, we also have
\[d_\os(G,G'),\; d_\os(H,H') \le \sym_{\epsilon'}\log\left(1+\tfrac{2}{\epsilon}\right).\]
Choose any points $G_0\in \pi_\gamma(G')$ and $H_0\in \pi_\gamma(H')$. Since these are by definition \emph{closest} points, $d_\os(G',G_0)$ and $d_\os(H',H_0)$ can be at most $\log\left(1 + \tfrac{2}{\epsilon}\right)$. By the triangle inequality, it follows that
\[d_\os(G,G_0),\; d_\os(H,H_0)\le (1 + \sym_{\epsilon'})\log\left(1+\tfrac{2}{\epsilon}\right).\]
Symmetrizing (\Cref{lem: symmetric_in_thick}) to obtain bounds on $\diam_\os(G,G_0)$ and $\diam_\os(H,H_0)$ and combining with \eqref{eqn:close_projections}, another application of the triangle inequality now gives
\[\diam_\os(G,H) \le 2\sym_{\epsilon}(1+\sym_{\epsilon'})\log\left(1+\tfrac{2}{\epsilon}\right) + 2D \qedhere.\]
\end{proof}

Since each primitive loop $\alpha$ in the projection $\plproj(G_t)$ of $G_t=\gamma(t)$ satisfies $\ell(\alpha\vert G_t) \le \minlen_\alpha +2$ by definition, \Cref{lem:thick strong contract} shows that the composition
\[\gamma(\I) \overset{\plproj}{\longrightarrow}\pl\overset{\minpts_\gamma}{\longrightarrow}\gamma(\I)\]
moves points a uniformly bounded distance depending only on $D$ and $\epsilon$. That is, for each thick strongly contracting geodesic $\gamma\colon \I\to \os$, the composition $\plproj\circ\minpts_\gamma$ gives a coarse retraction from $\pl$ onto the image $\plproj(\gamma(\I))$ of $\gamma$. Combining this with the fact that $\minpts_\gamma$ is coarsely Lipschitz (\Cref{lem:coarse_lip_projection}) now easily implies our main result of this section:

\begin{proof}[Proof of \Cref{prop:strong_contract_makes_progress}]
We write $G_t = \gamma(t)$ for $t\in \I$, and fix $s,t\in \I$ with $s\le t$. Since the projection $\fproj\colon \os\to \fc$ is coarsely $80$--Lipschitz \cite[Lemma 2.9]{DT1} and $\gamma$ is a geodesic, we immediately have $d_\fc(G_s,G_t)\le 80\abs{s-t} + 80$ for all $s,t\in \I$. Thus it remains to bound $d_\fc(G_s,G_t)$ from below. Let $\alpha\in \pl^0$ be any primitive conjugacy class represented by an embedded loop in $G_s$ (i.e., any class for which $\alpha\vert G_s \to G_s$ is an embedding). Then $\alpha\in \fproj(G_s)$ by definition of the projection $\fproj\colon \os\to\fc$. Similarly choose $\beta\in \pl^0$ represented by an embedded loop in $G_t$, so that $\beta\in \fproj(G_t)$.

Notice that $\ell(\alpha\vert G_s)\le 1$ and $\ell(\beta\vert G_t) \le 1$ (since the loops are embedded). Thus \Cref{lem:thick strong contract} gives a constant $D_{\epsilon}$, depending only on $\epsilon$ and $D$, such that 
\begin{equation*}
\left.\begin{array}{c} \mathrm{diam}_{\X}(\{G_s\} \cup \minpts_\gamma(\alpha)) \\  \mathrm{diam}_{\X}(\{G_t\}\cup \minpts_\gamma(\beta)) 
\end{array}\right\}\le D_\epsilon
\end{equation*}
Then by \Cref{lem:coarse_lip_projection} we have
\begin{align*}
|s-t| &= d_{\X}(G_s,G_t) \\
&\le \diam_{\X}\big(\{G_s\} \cup  \minpts_\gamma(\alpha)\big) + \diam_{\X}\big(\minpts_\gamma(\alpha) \cup \minpts_\gamma(\beta)\big) + \diam_{\X}\big(\minpts_\gamma(\beta) \cup \{G_t\}\big) \\
&\le 2D_{\epsilon} + D \cdot d_{\pl}(\alpha,\beta) +D \le 2D_{\epsilon} + 2D \cdot d_{\fc}(\alpha,\beta) +D \\
&\le 2D \cdot d_{\fc}(G_s,G_t) + 2D_\epsilon +D.
\end{align*}
This completes the proof.
\end{proof}

\section{Backing into thickness}
\label{sec:backing_up}

In light of \Cref{prop:strong_contract_makes_progress}, to prove our main result \Cref{thm:contraction_impies_progress} it now suffices to show that every nondegenerate strongly contracting geodesic $\gamma$ lives in some definite thick part of $\os$.
We begin by showing that the portion of $\gamma$ where the lengths of primitive loops are minimized is contained in some definite thick part of $\X$. Arguments in \S\S\ref{sec:nondegeneracy}--\ref{sec:finish_up} will then show that all of $\gamma$ must be thick. 

First, recall that $\lipconst$ denotes the coarse Lipschitz constant of the projection $\plproj\colon \X \to \pl$. In particular, $d_\pl (\alpha,\beta) \le \lipconst$ for any $\alpha,\beta \in \pl$ with $\ell(\alpha|G), \ell(\beta|G) \le 2$ for some $G \in \X$.

\begin{proposition}\label{prop:bacl_up}
Let $\gamma\colon \I\to \os$ be a $D$--strongly contracting geodesic and suppose there exist $\alpha_0 \in \pl$ and $s_0 \in \I$ such that $s_0 \in \hat{\rho}_\gamma(\alpha_0)$ and $\len(\alpha_0\vert\gamma(s_0)) \le 2$. If $\diam_\os(\gamma(t_0),\gamma(s_0)) \ge 8D\lipconst$ for some $t_0 < s_0$, then 
\[\gamma(\I\cap (-\infty,s_0])\subset\os_{\epsilon_0}\]
 for some thickness constant $\epsilon_0 > 0$ depending only on $D$.
\end{proposition}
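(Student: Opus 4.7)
The plan is by contradiction: suppose some $\gamma(t^\ast)$ with $t^\ast\le s_0$ admits an embedded primitive loop $\beta$ with $\ell(\beta\vert\gamma(t^\ast))<\epsilon_0$, for $\epsilon_0>0$ (depending only on $D$) to be chosen below. Then $\beta\in\plproj(\gamma(t^\ast))$, and by hypothesis $\alpha_0\in\plproj(\gamma(s_0))$ with $\gamma(s_0)\in\minpts_\gamma(\alpha_0)$. By \Cref{lem:coarse_lip_projection} the set $\minpts_\gamma(\beta)$ is nonempty, so choose $\gamma(r)\in\minpts_\gamma(\beta)$; then $m_\beta=\ell(\beta\vert\gamma(r))<\epsilon_0$ and $\beta\in\plproj(\gamma(r))$.

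The main tool is \Cref{lem:projections_agree} (projections agree) applied to well-chosen test graphs. Let $H$ be obtained from $\gamma(r)$ by rescaling the loop $\beta$ to length $m_\beta/4$ with the other edges uniformly rescaled to preserve unit volume. Since $\beta$ is a candidate at each $\gamma(r')$ with $\ell(\beta\vert\gamma(r'))\ge m_\beta$, we compute $d_\os(H,\gamma)\ge\log 4>\log 3$, with the infimum attained at $r'=r$. Hence $\gamma(r)\in\pi_\gamma(H)$, and \Cref{lem:projections_agree} gives $\diam_\os(\pi_\gamma(H)\cup\minpts_\gamma(\beta))\le D$. Construct $H'$ analogously by rescaling an embedded primitive loop of length $\le 2/3$ in $\gamma(s_0)$: then $d_\os(H',\gamma)\ge\log 4$, and since any primitive loop in $\plproj(\gamma(s_0))$ lies within universally bounded $\pl$-distance of $\alpha_0$, \Cref{lem:coarse_lip_projection} places $\pi_\gamma(H')$ within a $D$-depending constant of $\gamma(s_0)$.

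Next, estimate $d_\os(H,H')$ directly via \Cref{pro: distance} by chaining through a sequence of free-basis transitions in $\pl$ (the strategy used in the proof of \Cref{lem:coarse_lip_projection}). The result is an upper bound of the form $d_\os(H,H')\le\diam_\os(\gamma(r),\gamma(s_0))+C_0$ for a universal $C_0$. Whenever this bound is at most $\log 4=d_\os(H,\gamma)$, the $D$-strongly contracting property forces $\diam_\os(\pi_\gamma(H)\cup\pi_\gamma(H'))\le D$; combining with the projection bounds from the previous paragraph yields $\diam_\os(\gamma(r),\gamma(s_0))\le C_1(D)$ for a constant depending only on $D$.

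Finally, since both $\gamma(t^\ast)$ and $\gamma(r)$ have $\beta$ as a short loop, a similar test-graph argument (with $H$ replaced by an analogous construction from $\gamma(t^\ast)$) controls $\diam_\os(\gamma(t^\ast),\gamma(r))$, giving $\diam_\os(\gamma(t^\ast),\gamma(s_0))\le C_2(D)$. Choosing $\epsilon_0$ small enough that $C_2(D)<8D\lipconst$ and applying the bound with $t^\ast=t_0$ (or running the argument on any thin $t^\ast\in\I\cap(-\infty,s_0]$ and using the asymmetric triangle inequality to reach $t_0$) contradicts $\diam_\os(\gamma(t_0),\gamma(s_0))\ge 8D\lipconst$. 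The main obstacle is the $d_\os(H,H')$ estimate of the previous paragraph: getting this bounded by $\log 4$ independent of $d_\pl(\alpha_0,\beta)$ requires careful bookkeeping of the basis-transition chain in $\pl$, and the numerical constant $8D\lipconst$ in the hypothesis is precisely the slack needed to accommodate the cumulative losses in this estimate.
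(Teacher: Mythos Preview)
The proposal has two genuine gaps that prevent it from going through.

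\textbf{The strong-contraction step is circular.} You obtain $d_\os(H,H')\le\diam_\os(\gamma(r),\gamma(s_0))+C_0$ and then want this to be at most $d_\os(H,\gamma)=\log 4$ in order to apply the $D$--strongly contracting hypothesis and deduce $\diam_\os(\gamma(r),\gamma(s_0))\le C_1(D)$. But that requires $\diam_\os(\gamma(r),\gamma(s_0))\le\log 4-C_0$ already---precisely the thing you are trying to bound. The ``basis-transition chain'' does not rescue this: nothing in the setup bounds $d_\pl(\alpha_0,\beta)$, and without such a bound neither \Cref{lem:coarse_lip_projection} nor any chaining argument controls $\diam_\os(\gamma(r),\gamma(s_0))$. (Chaining strong contraction along a path from $H$ to $H'$ would yield a bound proportional to the number of steps, hence again proportional to $\diam_\os(\gamma(r),\gamma(s_0))$.)

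\textbf{The contradiction at the end does not follow.} Even granting $\diam_\os(\gamma(t^\ast),\gamma(s_0))\le C_2(D)$ for every thin $t^\ast\le s_0$, there is no contradiction with the hypothesis: the point $t_0$ is not assumed thin, so you cannot set $t^\ast=t_0$, and the triangle-inequality alternative you mention does not control $\diam_\os(\gamma(t_0),\gamma(s_0))$ from information about a different thin point. More basically, showing ``thin points are close to $\gamma(s_0)$'' does not show ``there are no thin points.'' And $C_2(D)$ depends only on $D$ by your own notation, so choosing $\epsilon_0$ small cannot force $C_2(D)<8D\lipconst$.

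The paper's argument is structurally different. It builds recursively a sequence $s_0>s_1>\dotsb$ with $s_k\in\mintime_\gamma(\alpha_k)$ and $\ell(\alpha_k\vert\gamma(s_k))\le 2$: given $s_k$, pick $s'_{k+1}<s_k$ with $\diam_\os(\gamma(s'_{k+1}),\gamma(s_k))=8D\lipconst$ (the hypothesis on $t_0$ guarantees the first such step), let $\alpha_{k+1}$ be a candidate at $\gamma(s'_{k+1})$ whose length grows exponentially forward, and take $s_{k+1}\in\mintime_\gamma(\alpha_{k+1})\le s'_{k+1}$. \Cref{lem:coarse_lip_projection} then gives two-sided bounds on both $\diam_\os(\gamma(s_{k+1}),\gamma(s_k))$ and $d_\pl(\alpha_{k+1},\alpha_k)$ depending only on $D$. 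Thickness on $[s_{k+1},s_k]$ follows from the \emph{lower} bound $d_\pl(\alpha_{k+1},\alpha_k)\ge 3\lipconst$: a very short loop at $\gamma(s_k)$ would remain short across the bounded step and hence lie in both $\plproj(\gamma(s_k))$ and $\plproj(\gamma(s_{k+1}))$, contradicting that lower bound. The key idea you are missing is this inductive stepping-back, which manufactures the needed $d_\pl$--separation at each stage rather than trying to compare $\alpha_0$ and $\beta$ directly.
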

\begin{proof}
Suppose that we are given $s_k\in\I$ and $\alpha_k\in \pl^0$ such that $s_k\in \mintime_\gamma(\alpha_k)$ and $\len(\alpha_k\vert\gamma(s_k))\le 2$. Suppose additionally there exists $t_k < s_k$ with $\diam_\os(\gamma(t_k),\gamma(s_k)) \ge 8D\lipconst$ (note that this holds for $k=0$). We claim there exists an earlier time $s_{k+1} < s_k$ and a conjugacy class $\alpha_{k+1}\in \pl^0$ again satisfying the conditions $s_{k+1}\in \mintime_\gamma(\alpha_{k+1})$ and $\len(\alpha_{k+1}\vert\gamma(s_{k+1})) \le 2$ together with the inequalities
\begin{align}\label{eqn:inductive_ineqs}
\begin{array}{rrcll}
4D\lipconst & \le &  \diam_\os(\gamma(s_{k+1}),\gamma(s_k)) & \le & D(\lipconst + 8D\lipconst^2 + 1), \quad\text{and}\\
3\lipconst  & \le & d_\pl(\alpha_{k+1},\alpha_k)            & \le & \lipconst + 8D\lipconst^2.\end{array}
\end{align}
Indeed, by continuity of $\gamma$ there exists $s_{k+1}' < s_k$ with
\[\diam_\os(x'_{k+1}, x_k) =  8D\lipconst,\]
where $x_k = \gamma(s_k)$ and $x'_{k+1} = \gamma(s'_{k+1})$. Since $\gamma$ is a directed geodesic, there exists a candidate $\alpha_{k+1}$ on $x'_{k+1}$ such that $\len(\alpha_{k+1}\vert \gamma(s'_{k+1} + t)) = e^t\len(\alpha_{k+1}\vert x'_{k+1})$ for all $t \ge 0$. Therefore, if we choose any time $s_{k+1}\in \mintime_\gamma(\alpha_{k+1})$ realizing the minimal length $\minlen_{\alpha_{k+1}}$, we may be assured that $s_{k+1}$ occurs to the left of $s'_{k+1}$. Letting $x_{k+1} = \gamma(s_{k+1})$ we thus find that $s_{k+1} \le s'_{k+1} < s_k$ and $\len(\alpha_{k+1}\vert x_{k+1}) \le \len(\alpha_{k+1}\vert x'_{k+1})\le 2$, as desired.

To prove the claim it remains to verify the inequalities in (\ref{eqn:inductive_ineqs}). First note that $\diam_\os(x_{k+1},x_k)\ge \tfrac{1}{2}\diam_\os(x'_{k+1},x_k) \ge 4D\lipconst$ by the triangle inequality and the fact that $\gamma$ is a directed geodesic. 
Hence, using \Cref{lem:coarse_lip_projection} we see that 
\begin{align*}
d_{\pl}(\alpha_{k+1},\alpha_k) &\ge \frac{1}{D}\big( \diam_\X(x_{k+1},x_k) -D\big)  \ge 3\lipconst.
\end{align*}
On the other hand, we may use the fact that $\ell(\alpha_k\vert x_k), \ell(\alpha_{k+1}|x_{k+1}' )\le 2$ to conclude that 
\begin{align*}
d_\pl(\alpha_{k+1},\alpha_k) \le d_\pl(x'_{k+1},x_k) \le \lipconst\,d_\os(x'_{k+1},x_k) + \lipconst \le 8D\lipconst^2 + \lipconst.
\end{align*}
Another application of \Cref{lem:coarse_lip_projection} then yields,
\begin{align*}
\diam_\X(x_{k+1}, x_k) &\le D \cdot d_{\pl}(\alpha_{k+1},\alpha_k) +D \\
&\le D(\lipconst+8D\lipconst^2)+D,
\end{align*}
which completes the proof of the claim.

Now let $E \colonequals 2D(\lipconst+8D\lipconst^2+1)$ and note that $E\ge 8D\lipconst$ by \eqref{eqn:inductive_ineqs}. Set
\[\epsilon_1 = e^{-E}\qquad\text{and}\qquad\epsilon_0 = e^{-2E}.\]
We claim that $\gamma([s_{k+1},s_k])\subset \os_{\epsilon_0}$. First observe that if $x_k\notin \os_{\epsilon_1}$, then we may find $\beta\in\pl^0$ with $\len(\beta\vert x_k) <\epsilon_1$. In this case (\ref{eqn:inductive_ineqs}) would give $\len(\beta\vert x_{k+1})< 1$ showing that $\beta$ is contained in both projections $\plproj(x_k)$ and $\plproj(x_{k+1})$. However, by (\ref{eqn:inductive_ineqs}), this contradicts the fact that these diameter $\lipconst$ sets contain $\alpha_k$ and $\alpha_{k+1}$, respectively. Whence $x_k\in \os_{\epsilon_1}$ and similarly $x_{k+1}\in\os_{\epsilon_1}$. Another application of (\ref{eqn:inductive_ineqs}) then shows 
\[\len(\beta \vert\gamma(t)) \ge \len(\beta\vert x_k) e^{-\abs{s_k-t}} \ge \epsilon_1 e^{-E} \ge \epsilon_0\]
for all $t\in [s_{k+1},s_k]$. Thus $\gamma([s_{k+1},s_k])\subset \os_{\epsilon_0}$ as claimed.

Let us now prove the proposition. If $\Imin = -\infty$, the above shows that we may find an infinite sequence of times $s_0 > s_1 > \dotsb$ tending to $-\infty$ such that $\gamma([s_{i+1},s_i])\subset \os_{\epsilon_0}$ for each $i>0$. Thus the proposition holds in this case. Otherwise $\Imin \ne -\infty$ and we may recursively construct a sequence $s_0 > \dotsb > s_k$ terminating at a time $s_k\in \I$ for which $\diam_\os(\gamma(\Imin),\gamma(s_k)) <8D\lipconst$ and $\gamma(s_k)\in\os_{\epsilon_1}$ (since $k\ge 1$ by the hypotheses of the proposition). But this implies $\len(\beta\vert\gamma(t))\ge \epsilon_1 e^{-8D\lipconst}$ for every conjugacy class $\beta$ and time $t\in [\Imin,s_k]$. Thus $\gamma([\Imin,s_k])\subset\os_{\epsilon_0}$ as well and the proposition holds.
\end{proof}
 
\section{Nondegeneracy and thickness}
\label{sec:nondegeneracy}

We have now developed enough tools to both establish nondegeneracy for typical strongly contracting geodesics and to show that each nondegenerate strongly contracting geodesic has a uniformly thick initial segment. We first establish \Cref{lem:usually_nondegenerate}, which implies that strongly contracting geodesics are automatically nondegenerate except possibly in the case of a short geodesic with a very thin left endpoint:

\begin{lemma}
\label{lem:usually_nondegenerate}
Suppose that $\gamma\colon\I\to \os$ is a $D$--strongly contracting geodesic. Then either of the following conditions imply that $\gamma$ is nondegenerate:
\begin{itemize}
\item $\abs{\I} \ge A$ for some constant $A$ depending only on $D$ and the injectivity radius of $\gamma(\Imin)$.
\item $\I$ is an infinite length interval
\end{itemize}
\end{lemma}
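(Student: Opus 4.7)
I would argue the contrapositive: assuming $\gamma$ is $D$-strongly contracting and degenerate, meaning $d_\os(\gamma(t),\gamma(s)) < 18D\lipconst$ for all $s < t$ in $\I$, I will derive that $\abs{\I}$ is finite and bounded by a constant $A$ depending only on $D$ and the injectivity radius of $\gamma(\Imin)$.

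The key initial observation, which uses no strong contraction at all, is that degeneracy translates via \Cref{pro: distance} into a uniform length-distortion bound: for every conjugacy class $\alpha$ and every $s < t$ in $\I$,
\[
\log\frac{\ell(\alpha\vert\gamma(s))}{\ell(\alpha\vert\gamma(t))} \leq d_\os(\gamma(t),\gamma(s)) < 18D\lipconst,
\]
so no loop can shrink going forward along $\gamma$ by more than the factor $e^{18D\lipconst}$.

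Suppose first that $\Imin > -\infty$, and denote by $i_0$ the injectivity radius of $\gamma(\Imin)$. Taking $s = \Imin$ and infimizing over $\alpha$ in the previous inequality, every $\gamma(t)$ has injectivity radius at least $\epsilon := e^{-18D\lipconst} i_0$, so $\gamma(\I)\subset \os_\epsilon$. \Cref{lem: symmetric_in_thick} then controls the asymmetry throughout $\gamma$:
\[
t - s = d_\os(\gamma(s),\gamma(t)) \leq \sym_\epsilon\cdot d_\os(\gamma(t),\gamma(s)) < 18D\lipconst\,\sym_\epsilon,
\]
hence $\abs{\I} < A := 18D\lipconst\,\sym_\epsilon$. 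As $\epsilon$ depends only on $D$ and $i_0$, so does $A$, and this refutes both the hypothesis $\abs{\I} \geq A$ and the possibility $\abs{\I} = \infty$ when $\Imin > -\infty$.

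It remains to handle $\Imin = -\infty$, where no finite initial injectivity radius is available and the above breaks down. Here I would invoke the structural machinery of \S\S\ref{sec:length_min}--\ref{sec:backing_up}: the length-minimization lemma of \S\ref{sec:length_min} associates to each point of $\gamma$ a primitive loop essentially minimized along $\gamma$, and the backing-up technique of \S\ref{sec:backing_up} combines this with strong contraction to produce some $a\in \I$ with $\gamma(a)\in\os_{\epsilon_0}$ for a universal thickness $\epsilon_0 = \epsilon_0(D)$. Applying the previous paragraph to the restriction $\gamma\vert_{[a,\Ipl]}$, which inherits degeneracy from $\gamma$, then forces $\Ipl - a < 18D\lipconst\,\sym_{\epsilon_0}$, contradicting $\abs{\I} = \infty$.

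The main obstacle is this last case: a priori a $D$-strongly contracting geodesic with $\Imin = -\infty$ could descend arbitrarily deep into the thin part as $t \to -\infty$, and the naive length-distortion argument fails there because $\sym_\epsilon$ blows up as $\epsilon \to 0$. The tools of \S\S\ref{sec:length_min}--\ref{sec:backing_up}, which leverage strong contraction against the coarsely $\lipconst$-Lipschitz projection $\plproj$, are precisely what rule out such pathological behavior.
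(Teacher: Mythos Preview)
Your contrapositive treatment of the case $\Imin > -\infty$ is correct and is essentially the paper's argument run backwards; the constants $\epsilon = e^{-18D\lipconst}i_0$ and $A = 18D\lipconst\,\sym_\epsilon$ agree with the paper's.

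The case $\Imin = -\infty$ contains a genuine gap. You produce a single thick point $\gamma(a)\in\os_{\epsilon_0}$, apply the first-case argument to $\gamma\vert_{[a,\Ipl]}$, and deduce $\Ipl - a < 18D\lipconst\,\sym_{\epsilon_0}$. But this does \emph{not} contradict $\abs{\I}=\infty$: here $\abs{\I}$ is infinite precisely because $\Imin=-\infty$, and bounding the finite portion $[a,\Ipl]$ says nothing about the infinite left tail. (Indeed $\Ipl$ may well be finite.) Your own closing paragraph correctly identifies the left tail as the obstacle, yet the argument you give only controls the right.

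The repair is to use the full conclusion of \Cref{prop:bacl_up} rather than the understatement ``some $a$ with $\gamma(a)$ thick.'' Take a candidate $\alpha$ at any $\gamma(s_0)$, choose $s\in\mintime_\gamma(\alpha)$ (nonempty by \Cref{lem:coarse_lip_projection}, and $\ell(\alpha\vert\gamma(s))\le 2$), and note that $\Imin=-\infty$ supplies $t_0<s$ with $\diam_\os(\gamma(t_0),\gamma(s))\ge s-t_0\ge 8D\lipconst$. Then \Cref{prop:bacl_up} gives $\gamma\big((-\infty,s]\big)\subset\os_{\epsilon_0}$ --- the entire left tail is thick, not just one point. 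Now your symmetry argument applied to any $t<s$ in this tail yields $s-t \le \sym_{\epsilon_0}\,d_\os(\gamma(s),\gamma(t)) < 18D\lipconst\,\sym_{\epsilon_0}$, which \emph{does} contradict $\Imin=-\infty$. This is exactly how the paper proceeds.
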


\begin{proof}
Let $\gamma\colon\I\to\os$ be a $D$--strongly contracting geodesic. First suppose that $\gamma$ is not infinite to the left (i.e., that $\Imin\neq -\infty$) and let $\epsilon$ be the injectivity radius of $\gamma(\Imin)$. Take $A = \sym_{\epsilon'}18D\lipconst$, where $\epsilon' = \epsilon e^{-18D\lipconst}$. We claim that $\gamma$ is nondegenerate provided $\abs{\I}\ge A$; this will establish the first item of the lemma. Indeed, consider the points $H = \gamma(\Imin)$ and $G = \gamma(\Imin+A)$. If $G\notin \os_{\epsilon'}$, then we automatically have $d_\os(G,H)\ge \log(\epsilon/\epsilon')\ge 18D\lipconst$ by definition of the Lipschitz metric, and otherwise $G\in\os_{\epsilon'}$ so that $d_\os(G,H)\ge d_\os(H,G)/\sym_{\epsilon'} = 18D\lipconst$ by \Cref{lem: symmetric_in_thick}.

To prove the second item of the lemma, it remains to consider the case $\Imin = -\infty$. Choose $s_0\in \I$ arbitrarily and let $\alpha\in \pl^0$ be a primitive conjugacy class with $\len(\alpha\vert \gamma(s_0))\le 2$ (e.g., a candidate). Next choose a time $s\in \mintime_\gamma(\alpha)$ and note that $\len(\alpha\vert\gamma(s))\le 2$. The fact that $\Imin=-\infty$ ensures we may find $t< s$ such that $\diam_\os(H, G)\ge  \sym_{\epsilon_0}18D\lipconst$, where $H = \gamma(t)$, $G = \gamma(s)$ and  $\epsilon_0>0$ is the thickness constant from \Cref{prop:bacl_up}. Then $\gamma(\I\cap(-\infty,s])$ lies in $\os_{\epsilon_0}$ by \Cref{prop:bacl_up}, and so we may conclude $d_\os(G,H)\ge \diam_\os(H,G) / \sym_{\epsilon_0} \ge 18D\lipconst$ by \Cref{lem: symmetric_in_thick}.
\end{proof}

Our next task is to show that nondegeneracy implies that the hypotheses of \Cref{prop:bacl_up} are satisfied, and consequently that the initial portion of any such geodesic is uniformly thick. The following lemma will aid in this endeavor.

\begin{lemma} \label{prop:local}
Let $\gamma: \I \to \X$ be a $D$--strongly contracting geodesic in $\X$ and suppose that there are $\alpha \in \pl^0$ and $s,t_1 \in \I$ such that $s \le t_1$ and $\ell(\alpha|\gamma(t_1)) < e^{-D}\ell(\alpha|\gamma(s))$. Then $\alpha$ has its length minimized to the right of $s \in \I$, i.e. $s < r$ for all $r\in \mintime_\gamma (\alpha)$.
\end{lemma}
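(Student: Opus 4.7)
My plan is to argue by contradiction: suppose there exists $r \in \mintime_\gamma(\alpha)$ with $r \le s$. A short approximation handles the possibility $r \in \{-\infty,+\infty\}$ --- one replaces $r$ by a sufficiently negative finite parameter at which $\ell(\alpha|\gamma(\cdot))$ is as close to $m_\alpha$ as desired --- so I may assume $r \in \R$ with $\ell(\alpha|\gamma(r)) = m_\alpha$. Combining the hypothesis with $m_\alpha \le \ell(\alpha|\gamma(t_1))$ yields $\ell(\alpha|\gamma(s)) > e^D m_\alpha$, and then \Cref{pro: distance} applied to the directed subsegment $\gamma|_{[r,s]}$ forces
\[s - r \;=\; d_\os(\gamma(r),\gamma(s)) \;\ge\; \log\bigl(\ell(\alpha|\gamma(s))/m_\alpha\bigr) \;>\; D.\]

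Next I would invoke the rose construction from the proofs of \Cref{lem:projections_agree} and \Cref{lem:coarse_lip_projection}: fix a free basis $\{\alpha = e_1,\ldots,e_n\}$ of $\free$ and let $R_\sigma \in \os$ be the marked rose in which the $\alpha$-petal has length $\sigma$ and each other petal has common length $(1-\sigma)/(n-1)$. For $\sigma > 0$ sufficiently small, the dominant-candidate analysis via \Cref{pro: distance} gives
\[d_\os(R_\sigma,\gamma(v)) \;=\; \log\bigl(\ell(\alpha|\gamma(v))/\sigma\bigr)\]
for each $v$ in the finite set $\{r,s,t_1\}$. In particular $d_\os(R_\sigma,\gamma) = \log(m_\alpha/\sigma)$ and $\gamma(r) \in \cp_\gamma(R_\sigma) \subset \minpts_\gamma(\alpha)$.

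The core of the argument is a second test point $H'$ designed to witness the short $\alpha$-length at $\gamma(t_1)$. Let $\rho\colon [0,K]\to\os$ be a directed Lipschitz geodesic from $R_\sigma$ to $\gamma(t_1)$, with $K = \log(\ell(\alpha|\gamma(t_1))/\sigma)$; set $T = d_\os(R_\sigma,\gamma) = \log(m_\alpha/\sigma) \le K$ and take $H' = \rho(T)$. Because $\alpha$ is the dominant stretching direction along $\rho$ (the same observation used in the proof of \Cref{lem:thick strong contract}), one has $\ell(\alpha|\rho(u)) = \sigma e^u$, so $\ell(\alpha|H') = m_\alpha$ and $d_\os(H',\gamma(t_1)) = K - T$. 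Since $d_\os(R_\sigma, H') = T = d_\os(R_\sigma,\gamma)$, the $D$--strongly contracting hypothesis yields $\diam_\os(\cp_\gamma(R_\sigma)\cup\cp_\gamma(H')) \le D$.

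The hard part will be verifying that $\cp_\gamma(H')$ must contain a point whose parameter is at least $s$: for this I expect a candidate-level analysis of $H'$ showing that its non-$\alpha$ candidates fit the profile of $\gamma(t_1)$ rather than that of $\gamma(r)$, so that the infimum of $d_\os(H',\gamma(\cdot))$ is attained near $\gamma(t_1)$. Once established, combining $\gamma(r)\in\cp_\gamma(R_\sigma)$ with some $\gamma(r')\in\cp_\gamma(H')$ having $r'\ge s$ gives $d_\os(\gamma(r),\gamma(r')) = r' - r \ge s - r > D$, contradicting the diameter bound just obtained and completing the proof.
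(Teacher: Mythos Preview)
Your setup is essentially the paper's: the rose $R_\sigma$, the geodesic $\rho$ from $R_\sigma$ to $\gamma(t_1)$, and the observation that $\gamma(r)\in\cp_\gamma(R_\sigma)$. The gap is exactly the step you flag as ``the hard part.'' There is no reason to expect a candidate-level analysis of $H'=\rho(T)$ to show that $\cp_\gamma(H')$ lies to the right of $s$. You know $\ell(\alpha\vert H')=m_\alpha$, but this gives \emph{no} lower bound on $d_\os(H',\gamma(u))$ for $u<s-D$; in particular at $u=r$ the $\alpha$--ratio is $1$, and you have no control whatsoever over the other candidates of $H'$ (whose underlying graph you do not even know). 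Nothing rules out $d_\os(H',\gamma(r))$ being small, so $\cp_\gamma(H')$ could perfectly well sit near $\gamma(r)$, and the contradiction evaporates.

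The paper sidesteps this by never trying to locate $\cp_\gamma$ of a single intermediate point. Instead it treats the \emph{entire} geodesic $[R_\sigma,\gamma(t_1)]$ as a connected set and shows, via the same $\alpha$--length comparison you used, that for every $G$ on this geodesic and every $j$ in $J=\I\cap[s-D,s]$ one has $d_\os(G,\gamma(j))>d_\os(G,\gamma(t_1))$; hence $\cp_\gamma([R_\sigma,\gamma(t_1)])$ misses $\gamma(J)$. Since $\gamma(t_1)\in\cp_\gamma(\gamma(t_1))$ lies to the right of $J$ and $J$ has length $D$ (or contains $\Imin$), \Cref{lem:dense_projection} forces the whole projection---in particular $\gamma(r)\in\cp_\gamma(R_\sigma)$---to the right of $s$. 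This connectedness argument via \Cref{lem:dense_projection} is the missing idea; your single-point analysis of $H'$ cannot replace it.
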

\begin{proof}
Set $H = \gamma(t_1)$ and $J = \I \cap [s-D,s]$. 
Then $\alpha$ can stretch by at most $e^D$ along $J$ (since $\gamma$ is a directed geodesic), and so for each $j\in J$ we have
\[\len(\alpha\vert\gamma(j)) \ge e^{-D}\len(\alpha\vert \gamma(s)) > \len(\alpha\vert H).\]

Let $r\in \mintime_\gamma(\alpha)$ be any time minimizing the length of $\alpha$. Fix a marked rose $R$ with a petal corresponding to the conjugacy class $\alpha$. For $0 < \sigma < 1$, let $R_\sigma$ denote the metric graph obtained from $R$ by setting the length of the $\alpha$--petal to $\sigma$ and the length of each other petal to $\frac{1-\sigma}{r-1}$. As in the proof of \Cref{lem:coarse_lip_projection}, $\sigma$ can be taken sufficiently small so that $\alpha$ is the candidate of $R_\sigma$ realizing the distance from $R_\sigma$ to $H$. Consequently, if $[R_\sigma,H]$ denotes a directed geodesic from $R_\sigma$ to $H$, then $\alpha$ also realizes the distance from $G$ to $H$ for each point $G\in [R_\sigma,H]$. It now follows that for each $j\in J$ and $G\in [R_\sigma,H]$ we have
\[d_\os(G,\gamma(j)) \ge \log\left(\frac{\len(\alpha\vert \gamma(j))}{\len(\alpha\vert G)}\right) > \log\left(\frac{\len(\alpha\vert H)}{\len(\alpha\vert G)}\right) = d_\os(G,H).\]
In particular the entire projection $\cp_\gamma([R_\sigma,H])$ is disjoint from the interval $\gamma(J)$. 

Taking $\sigma$ smaller if necessary, we may also assume that $\alpha$ realizes the distance from $R_\sigma$ to $\gamma(r)$. Since $\len(\alpha\vert\gamma(r)) = \minlen_\alpha$ is the minimal length of $\alpha$, this forces $\gamma(r)\in\cp_\gamma(R_\sigma)$. Whence $r$ cannot lie in $J$ by the above. Now, if $J = \I\cap[s-D,s]$ contains the initial endpoint $\Imin$, this observation forces $r > s$ as desired. Otherwise $J$ is the length--$D$ interval $J= [s-D,s]$, and we may apply \Cref{lem:dense_projection} to conclude that $r\in \cp_\gamma([R_\sigma,H])$ is contained in $\gamma(\I\cap (s,\infty))$. Thus $r > s$ and the lemma holds.
\end{proof}

\begin{corollary} \label{cor:nondegen_implies_min}
Given $D$ there exists $\epsilon_0> 0$ with the following property. If $\gamma\colon\I\to\os$ a nondegenerate $D$--strongly contracting geodesic, then there exists $s_0\in \I$ such that $\gamma(\I\cap (-\infty,s_0]) \subset \X_{\epsilon_0}$.
\end{corollary}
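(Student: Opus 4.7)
The plan is to exhibit a primitive loop $\beta\in\pl^0$ and a time $s_0\in\I$ satisfying the hypotheses of \Cref{prop:bacl_up}; the conclusion $\gamma(\I\cap(-\infty,s_0])\subset\os_{\epsilon_0}$ then follows immediately, with $\epsilon_0$ the constant (depending only on $D$) from \Cref{prop:bacl_up}. So the only real work is in producing $\beta$ and $s_0$.

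By nondegeneracy, I pick times $s^*<t^*$ in $\I$ with $d_\os(\gamma(t^*),\gamma(s^*))\ge 18D\lipconst$. Using \Cref{pro: distance} applied to the reverse distance, I take $\beta$ to be a candidate of $\gamma(t^*)$ realizing this distance; then $\beta$ is primitive, $\ell(\beta\vert\gamma(t^*))\le 2$, and
\[\log\left(\frac{\ell(\beta\vert\gamma(s^*))}{\ell(\beta\vert\gamma(t^*))}\right)=d_\os(\gamma(t^*),\gamma(s^*))\ge 18D\lipconst.\]
Because $\beta$ is primitive, \Cref{lem:coarse_lip_projection} guarantees $\minpts_\gamma(\beta)\ne\emptyset$, so I fix $s_0\in\mintime_\gamma(\beta)\cap\R$ and note $\ell(\beta\vert\gamma(s_0))=\minlen_\beta\le \ell(\beta\vert\gamma(t^*))\le 2$.

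It remains to produce $t_0<s_0$ with $\diam_\os(\gamma(t_0),\gamma(s_0))\ge 8D\lipconst$. Since $\lipconst\ge 1$ the displayed inequality gives $\ell(\beta\vert\gamma(t^*))<e^{-D}\ell(\beta\vert\gamma(s^*))$, so \Cref{prop:local} applied with the pair $(s^*,t^*)$ forces every $r\in\mintime_\gamma(\beta)$ to satisfy $r>s^*$; in particular $s^*<s_0$. Combining with $\ell(\beta\vert\gamma(s_0))\le\ell(\beta\vert\gamma(t^*))$ yields
\[d_\os(\gamma(s_0),\gamma(s^*))\ge\log\left(\frac{\ell(\beta\vert\gamma(s^*))}{\ell(\beta\vert\gamma(s_0))}\right)\ge\log\left(\frac{\ell(\beta\vert\gamma(s^*))}{\ell(\beta\vert\gamma(t^*))}\right)\ge 18D\lipconst,\]
so $\diam_\os(\gamma(s^*),\gamma(s_0))\ge 8D\lipconst$. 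Taking $t_0=s^*$ verifies every hypothesis of \Cref{prop:bacl_up}, completing the proof.

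I don't anticipate any obstacle beyond this bookkeeping, as the two key technical inputs already do the heavy lifting. The one subtle point is the use of \Cref{prop:local}: it is precisely what ensures the length-minimizing parameter for the candidate $\beta$ lies strictly to the right of the nondegeneracy pair, so that $s_0$ automatically inherits a substantial backwards stretch that feeds into \Cref{prop:bacl_up}. This is the step where the nondegeneracy hypothesis gets converted into genuine geometric content.
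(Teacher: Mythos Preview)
Your proof is correct and follows essentially the same approach as the paper: use nondegeneracy to produce a candidate $\beta$ of $\gamma(t^*)$ whose length shrinks enough to invoke \Cref{prop:local}, locate $s_0\in\mintime_\gamma(\beta)$ to the right of $s^*$, and then feed the resulting backwards diameter into \Cref{prop:bacl_up}. Your execution is in fact slightly more streamlined than the paper's---you bound $d_\os(\gamma(s_0),\gamma(s^*))$ directly via the length ratio of $\beta$, whereas the paper inserts an auxiliary point $s$ with $d_\os(\gamma(t_1),\gamma(s))=2D$ and then uses a triangle-inequality manipulation; both reach the same conclusion.
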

\begin{proof}
By definition, nondegeneracy implies that there are times $t_0 <t_1$ in $\I$ so that $d_\X(\gamma(t_1),\gamma(t_0)) \ge 18D\lipconst$. Letting $s \in \I$ be such that $t_0 \le s \le t_1$ and $d_\X(\gamma(t_1), \gamma(s)) = 2D$, the triangle inequality then gives
\[d_\os(\gamma(s),\gamma(t_0)) \ge d_\os(\gamma(t_1),\gamma(t_0)) - 2D \ge 16D\lipconst.\]
Let $\alpha \in \pl^0$ denote the candidate of $\gamma(t_1)$ that realizes the distance to $\gamma(s)$, i.e. $\ell(\alpha|\gamma(t_1)) = e^{-2D}\ell(\alpha|\gamma(s))$. If we choose any time $s_0\in\mintime_\gamma(\alpha)$ minimizing $\len(\alpha| \gamma(\cdot))$, then $s \le s_0$ by \Cref{prop:local}. Since $\ell(\alpha| \gamma(t_1)) \le 2$, we have that $\ell(\alpha | \gamma(s_0)) \le 2$. Finally, since $t_0 < s\le s_0$ and $\gamma$ is a directed geodesic, we find that
\begin{align*}
16D\lipconst \le d_\os(\gamma(s),\gamma(t_0)) &\le d_\os(\gamma(s),\gamma(s_0)) + d_\os(\gamma(s_0),\gamma(t_0))\\
&\le d_\os(\gamma(t_0),\gamma(s_0)) + d_\os(\gamma(s_0),\gamma(t_0)) \le 2\diam_\os(\gamma(t_0),\gamma(s_0)).
\end{align*}
Therefore $\diam_\os(\gamma(t_0),\gamma(s_0))\ge 8D\lipconst$ and we may apply \Cref{prop:bacl_up} to complete the proof.
\end{proof}

Finally, we show that if a strongly contracting geodesic in $\X$ has its initial portion contained in some definite thick part of $\X$, then the entire geodesic remains uniformly thick.

\begin{lemma}
\label{prop:left_thick}
Suppose that $\epsilon_0, D >0$ and that $\gamma: \I \to \X$ is a $D$--strongly contracting geodesic with $\gamma(\I \cap (-\infty, b]) \subset \X_{\epsilon_0}$ for some $b\in \I$. Then $\gamma(\I) \subset \X_{\epsilon}$, for 
$\epsilon = \tfrac{\epsilon_0}{2}e^{-4D\lipconst}$.
\end{lemma}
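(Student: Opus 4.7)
The plan is a proof by contradiction. Suppose $\gamma(t_*)\notin \X_\epsilon$ for some $t_*\in\I$, witnessed by a primitive $\beta\in\pl^0$ with $\ell(\beta|\gamma(t_*))<\epsilon$. The infimal length $m_\beta\le \ell(\beta|\gamma(t_*))<\epsilon<\epsilon_0$ cannot be achieved or approached at any $t\le b$ in $\I$, since $\gamma(t)\in \X_{\epsilon_0}$ forces $\ell(\beta|\gamma(t))\ge \epsilon_0 > m_\beta$ there. Hence every $r\in\mintime_\gamma(\beta)$ lies strictly to the right of $b$; fix such $r\in\R$ (the limiting case $\mintime_\gamma(\beta)\cap\R = \emptyset$ is handled by a sequential-limit version of the argument). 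In particular $\ell(\beta|\gamma(b))\ge \epsilon_0$ and $\ell(\beta|\gamma(r)) = m_\beta <\epsilon$.

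Next, since $t\mapsto \ell(\beta|\gamma(t))$ is continuous, the intermediate value theorem yields $t_2\in(b,r)$ with $\ell(\beta|\gamma(t_2)) = \epsilon_0/2$. Then
\[
\ell(\beta|\gamma(r)) < \tfrac{\epsilon_0}{2}e^{-4D\lipconst} = e^{-4D\lipconst}\ell(\beta|\gamma(t_2)) \le e^{-D}\ell(\beta|\gamma(t_2))
\]
(using $\lipconst\ge 1$). This is exactly the hypothesis of \Cref{prop:local} with $s=t_2$, $\alpha=\beta$, and $r$ in the role of $t_1$, so every minimizing time of $\beta$ exceeds $t_2$. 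Moreover, the forward Lipschitz bound along the interval $J\colonequals \I\cap [t_2-D, t_2]$ gives $\ell(\beta|\gamma(j))\ge e^{-D}\ell(\beta|\gamma(t_2)) > \ell(\beta|\gamma(r))$ for every $j\in J$.

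The contradiction is then extracted by following the rose construction from the proof of \Cref{prop:local}. Let $R$ be a rose with $\beta$ as one petal and $R_\sigma$ its rescaling with $\ell(\beta|R_\sigma) = \sigma\ll 1$. For $\sigma$ small, $\beta$ realizes the Lipschitz distance from $R_\sigma$ to every point $G$ on the directed geodesic $[R_\sigma,\gamma(r)]$, so $d_\X(G,\gamma(j)) > d_\X(G,\gamma(r))$ for all $j\in J$, and hence $\cp_\gamma([R_\sigma,\gamma(r)])$ avoids $\gamma(J)$. Pre-concatenate $[R_\sigma,\gamma(r)]$ with a directed geodesic $[\gamma(b),R_\sigma]$ to form a connected set $A\subset\X$. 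Its $\cp_\gamma$-image contains $\gamma(b)\in\gamma(\I\cap(-\infty,t_2))$ (via self-projection) together with $\gamma(r)\in\gamma(\I\cap(t_2,\infty))$ (since $\gamma(r)\in\cp_\gamma(R_\sigma)$). Choosing $\sigma$ small enough that $\beta$ continues to dominate the length-ratio computation along all of $A$ keeps $\cp_\gamma(A)$ disjoint from $\gamma(J)$; applying \Cref{lem:dense_projection} to $A$ with the missed $D$-interval $\gamma(J)$ then yields the desired contradiction.

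The main obstacle is this last $\beta$-dominance on the prepended geodesic $[\gamma(b),R_\sigma]$, where $\ell(\beta|G)$ interpolates between $\epsilon_0$ and $\sigma$; the key is that the candidates of $\gamma(b)$ have bounded lengths (controlled via the thickness $\epsilon_0$ and \Cref{pro: distance}), so shrinking $\sigma$ makes $\beta$ the unique dominant candidate throughout the concatenation. A minor degenerate situation is when $t_2 - D < \Imin$, i.e., when $J$ would include the left endpoint of $\I$; this is handled exactly as in \Cref{prop:local}'s proof by concluding one-sidedness of the projection directly, without invoking \Cref{lem:dense_projection}.
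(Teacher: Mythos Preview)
Your argument has a genuine gap in the final step. You claim that shrinking $\sigma$ makes $\beta$ the dominant candidate throughout the concatenation $A = [\gamma(b), R_\sigma] \cup [R_\sigma, \gamma(r)]$, so that $\cp_\gamma(A)$ avoids $\gamma(J)$. But this fails near $\gamma(b)$: at $G = \gamma(b)$ one has $\ell(\beta\vert G) \ge \epsilon_0$ \emph{independently of $\sigma$}, so the only available lower bound $d_\os(G,\gamma(j)) \ge \log\big(\ell(\beta\vert\gamma(j))/\ell(\beta\vert G)\big)$ is useless (indeed possibly negative) for $j \in J$. Shrinking $\sigma$ alters the rose end of the path but does nothing to the $\gamma(b)$ end; there is no mechanism preventing $\cp_\gamma(G)$ from meeting $\gamma(J)$ for $G$ near $\gamma(b)$ on $[\gamma(b),R_\sigma]$. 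Your justification (``the candidates of $\gamma(b)$ have bounded lengths'') controls the wrong quantity: bounding candidate lengths at $\gamma(b)$ does not make $\beta$ short there. There is also a secondary problem: you only know $b < t_2$, not $b < t_2 - D$, so $\gamma(b)$ itself may lie in $\gamma(J)$; the degenerate case you single out ($t_2 - D < \Imin$) is only one instance of this.

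The paper's proof sidesteps this entirely by invoking \Cref{lem:coarse_lip_projection} rather than attempting a direct application of \Cref{lem:dense_projection}. After locating $s$ with $\ell(\alpha\vert\gamma(s)) = \epsilon_0$ (your $\beta$ plays the role of the paper's $\alpha$, and your $t_2$ roughly that of $s$), the paper introduces a \emph{second} primitive $\beta'$: a candidate of $\gamma(s)$ that stretches maximally going forward, whose length-minimizer therefore lies at some $r' \le s$. Since $\alpha$ and $\beta'$ are both short at $\gamma(s)$, one has $d_\pl(\alpha,\beta') \le \lipconst$, and \Cref{lem:coarse_lip_projection} then gives $\diam_\os(\gamma(r'),\gamma(t_\alpha)) \le 2D\lipconst$. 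A triangle inequality using $r' \le s \le t_\alpha$ and directedness of $\gamma$ yields $d_\os(\gamma(t_\alpha),\gamma(s)) \le 4D\lipconst$, whence $m_\alpha \ge \epsilon_0 e^{-4D\lipconst}$. The missing idea in your approach is precisely this auxiliary candidate $\beta'$ together with the already-established Lipschitz estimate for $\minpts_\gamma$.
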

\begin{proof}
Write $G_t = \gamma(t)$ for $t\in \I$. Without loss of generality we assume $\epsilon_0 < 1$. 
It suffices to prove $\minlen_\alpha \ge \epsilon$ where $\alpha$ is an arbitrary primitive loop $\alpha$. Note that $\minlen_\alpha >0$ and $\minpts_\gamma(\alpha)\neq\emptyset$ by  \Cref{lem:coarse_lip_projection}. If $\minlen_\alpha \ge \epsilon_0/2$ then we are done. Otherwise we choose $t_\alpha\in\mintime_\gamma(\alpha)$ and note that $\len(\alpha\vert G_{t_\alpha})< \epsilon_0/2$. 
Since $\ell(\alpha\vert G_t)$ is continuous in $t$ and at least $\epsilon_0$ for all $t \le b$, there is some $s < t_\alpha$ so that $\len(\alpha\vert G_s) = \epsilon_0$.

Let $\beta$ be a candidate of $G_s$ such that $\len(\beta \vert G_{s+t}) = e^t\len(\beta \vert G_s)$ for all $t > 0$. If $r\in \mintime_\gamma(\beta)$ is any time minimizing the length of $\beta$, we then necessarily have $r \le s$. Since $\alpha$ and $\beta$ each have length less than $2$ at $G_s\in\os$, it follows that $d_\pl (\alpha,\beta) \le \lipconst$. \Cref{lem:coarse_lip_projection} then implies that 
\[\diam_\os (G_r,G_{t_\alpha}) \le D \cdot d_\pl(\alpha,\beta) + D \le D\lipconst + D \le 2D\lipconst.\] 
Since $\gamma$ is a directed geodesic, $d_\X(G_r,G_s) \le d_\X(G_r,G_{t_\alpha})$ and so
\begin{align*}
d_\X (G_{t_\alpha},G_s) &\le d_\X(G_{t_\alpha},G_r) + d_\X(G_r,G_s) \\
&\le d_\X(G_{t_\alpha},G_r) + d_\X(G_r,G_{t_\alpha}) \le 4D\lipconst.
\end{align*}
In particular, $\frac{\ell(\alpha|G_{s})}{\ell(\alpha|G_{t_\alpha})} \le e^{4D\lipconst}$, and so we find $m_{\alpha} \ge \epsilon_0 e^{-4D\lipconst}$ as desired.
\end{proof}

\section{Characterizing strongly contracting geodesics}
\label{sec:finish_up}

We now combine the previous results to complete the proof of our main theorem:

\begin{thm:main}[Strongly contracting geodesics make progress in $\fc$]
For each $D> 0$ there exist constants $K\ge 1$ and $\epsilon> 0$ with the following property. If $\gamma\colon \I\to \os$ is a nondegenerate $D$--strongly contracting geodesic, then $\gamma(\I)$ lies in the $\epsilon$--thick part $\os_\epsilon$ and $\fproj\circ\gamma\colon \I\to \fc$ is a $K$--quasigeodesic.
\end{thm:main}
\begin{proof}
Suppose $\gamma\colon\I\to\os$ is nondegenerate and $D$--strongly contracting. By nondegeneracy, \Cref{cor:nondegen_implies_min} and \Cref{prop:left_thick} together give $\gamma(\I)\subset\os_\epsilon$ for some $\epsilon = \epsilon(D) > 0$. \Cref{prop:strong_contract_makes_progress} then provides a constant $K = K(D,\epsilon)$ for which $\fproj\circ\gamma\colon\I\to\fc$ is a $K$--quasigeodesic.
\end{proof}

We next discuss the converse \Cref{th: BF_contract} and explain how the Bestvina--Feighn result \cite[Corollary 7.3]{BFhyp} on folding lines that make definite progress in $\fc$ may be promoted to arbitrary geodesics. While this promotion essentially follows from our earlier work \cite{DT1}, we have opted to include a proof here for completeness. In this discussion we assume the reader is familiar with folding paths and standard geodesics in $\X$; for background on this see \cite{FMout, BFhyp, DT1}.

\begin{thm:progress}[Progressing geodesics are strongly contracting] 
Let $\gamma\colon \I \to \X$ be a geodesic whose projection to $\F$ is a $K$--quasigeodesic. Then there exists $D > 0$ depending only on $K$ (and the injectivity radius of the terminal endpoint of $\gamma$) such that $\gamma$ is $D$--strongly contracting in $\X$.
\end{thm:progress}
\begin{proof}
Let $\gamma\colon\I\to\os$ be an arbitrary directed geodesic such that $\fproj\circ\gamma\colon\I\to\fc$ is a $K$--quasigeodesic, and let $H,H'\in\os$ be metric graphs satisfying $d_\os(H,H')\le d_\os(H,\gamma)$. Lemma 4.3 of \cite{DT1} shows that $\gamma(\I)\subset \os_{\epsilon}$ for some $\epsilon > 0$ depending only on $K$ (and the injectivity radius of $\gamma(\Ipl)$ when $\Ipl \ne +\infty$). Using the coarse symmetry of $d_\os$ in $\os_{\epsilon}$ (\Cref{lem: symmetric_in_thick}), one may easily show that $\cp_\gamma(G)$ is never empty. Hence to prove the theorem it suffices to choose $p\in \cp_\gamma(H)$ and $p'\in\cp_\gamma(H')$ arbitrarily and bound $\diam_\os(p,p')$ in terms of $K$ and $\epsilon$.

Choose a finite subinterval $\J=[a,b]\subset\I$ with $p,p'\in\gamma(\J)$ and let $\bar{\gamma} = \gamma\vert_{\J}$. Notice that $p\in\cp_{\bar\gamma}(H)$ and $p'\in \cp_{\bar\gamma}(H')$. If $\rho\colon\J\to\os$ is any standard geodesic from $\bar\gamma(a)$ to $\bar\gamma(b)$, 
then Theorem 4.1 of \cite{DT1} ensures $\fproj\circ\rho$ is a $K'=K'(K,\epsilon)$--quasigeodesic and that $\rho(\J)\subset \os_{\epsilon'}$, where  $\epsilon'=\epsilon'(K,\epsilon)$. Consequently, Proposition 7.2 of \cite{BFhyp} and Lemma 4.11 of \cite{DT1} (see also \cite[Proposition 2.11]{DT1} and the following remark) immediately show that $\rho$ is $D'=D'(K,\epsilon)$--strongly contracting. 

Theorem 4.1 of \cite{DT1} additionally shows that $\bar\gamma(\J)$ and $\rho(\J)$ have symmetric Hausdorff distance at most $A' = A'(K,\epsilon)$. Consequently, we claim that there exists $B' = B'(A',D',\epsilon')$ such that 
\begin{equation}
\label{eqn:projections_near_each_other}
\diam_\os(\cp_{\bar\gamma}(G)\cup\cp_\rho(G))\le B'
\end{equation}
for all $G\in \os$. To see this, choose $Y_0\in\cp_{\bar\gamma}(G)$ arbitrarily and let $Y\in \cp_\rho(Y_0)$ be a closest point projection of $Y_0$ to $\rho$. Noting that $d_\os(G,Y_0)\le d_\os(G,\rho) + A'$ and $d_\os(Y_0,Y)\le A'$, we see that $d_\os(G,Y)\le d_\os(G,\rho) + 2A'$. Thus, as in the proof of \Cref{lem:thick strong contract}, we may find $Y'\in \os$ along a directed geodesic from $G$ to $Y$ such that $d_\os(G,Y')\le d_\os(G,\rho)$ and $d_\os(Y',Y)\le 2A'$. The strong contraction property for $\rho$ now gives $\diam_\os(\cp_\rho(Y')\cup\cp_\rho(G))\le D'$, and the fact that $Y$ is $\epsilon'$--thick and near $Y'$ bounds $\diam_\os(\{Y\}\cup\cp_\rho(Y'))$ in terms of $\epsilon'$ and $A'$. Hence $\diam_\X(\{Y\}\cup\cp_\rho(G))$ is bounded and, since $\diam_\os(Y,Y_0)\le A'$, the claimed inequality (\ref{eqn:projections_near_each_other}) holds.

We next claim that $\cp_\rho$ is coarsely $1$--Lipschitz. That is, there exists $C'=C'(D',\epsilon')$ such that
\begin{equation}
\label{eqn:coarse_lipschitz_proj}
\diam_\os(\cp_\rho(G_1)\cup\cp_\rho(G_2)) \le \diam_\os(G_1,G_2) + C'
\end{equation}
for all $G_1,G_2\in\os$. Indeed, first consider the case that there exists a directed geodesic $[G_1,G_2]$ with $d_\os(Y,\rho) \ge D'$ for all $Y\in [G_1,G_2]$. Dividing $[G_1,G_2]$ into $n=\ceil{d_\os(G_1,G_2)/D'}$ subgeodesics of equal length (at most $D'$) and applying strong contraction to each, one finds that
\[\diam_\os(\cp_\rho(G_1)\cup\cp_\rho(G_2)) \le n D' \le \left(\frac{d_\os(G_1,G_2)}{D'} + 1\right) D' \le  \diam_\os(G_1,G_2)+D'.\]
Next consider the case that $d_\os(G_i,\rho)\le D'$ for each $i=1,2$. Choosing $G_i'\in\cp_\rho(G_i)$ arbitrarily, \Cref{lem: symmetric_in_thick} and the thickness of $G'_i\in\os_{\epsilon'}$ together bound $\diam_\os(G_i,G'_i)$ in terms of $\epsilon'$ and $D'$. Thus the difference $\abs{\diam_\os(G'_1,G'_2) - \diam_\os(G_1,G_2)}$ is bounded in terms of $\epsilon'$ and $D'$. The general case now follows by subdividing an arbitrary directed geodesic $[G_1,G_2]$ into at most three subgeodesics that each fall under the cases considered above.

To complete the proof of the theorem, note that since $d_\os(H,\rho)\ge d_\os(H,\bar{\gamma})-A'$ we can find a point $H_0\in \os$ (say on a geodesic from $H$ to $H'$) such that $d_\os(H,H_0)\le d_\os(H,\rho)$ and $d_\os(H_0,H')\le A'$. Then $\diam_\os(\cp_\rho(H)\cup\cp_\rho(H_0))\le D'$ by strong contraction and $\diam_\os(\cp_\rho(H_0)\cup\cp_\rho(H')) \le A'+C'$ by (\ref{eqn:coarse_lipschitz_proj}). Combining these with (\ref{eqn:projections_near_each_other}) gives the desired bound on $\diam_\os(p,p')$.
\end{proof}

\section{Contracting subgroups of $\Out(\free)$}
\label{sec:contracting_subgroups}

In this section we apply \Crefrange{th: BF_contract}{thm:contraction_impies_progress} to characterize the finitely generated subgroups of $\Out(\free)$ that quasi-isometrically embed into $\fc$. Recall that a subgroup $\Gamma\le\Out(\free)$ is said to be \define{contracting in $\os$} if there exists $R\in\os$ and $D > 0$ such that any two points in the orbit $\Gamma\cdot R$ are joined by a $D$--strongly contracting geodesic. Using \Cref{th: BF_contract,thm:contraction_impies_progress} and \cite[Theorem 4.1]{DT1}, one may show that this definition is in fact equivalent to the following stronger condition: for each $R\in\os$ there exists $D > 0$ such that every directed geodesic between points of $\Gamma\cdot R$ is $D$--strongly contracting; alternately, under the hypothesis of \Cref{th:contracting_orbit}, this follows from the proof below.

\begin{thm:orbits}[Contracting orbits]
Suppose that $\Gamma \le \Out(\free)$ is finitely generated and that the orbit map $\Gamma \to \X$ is a quasi-isometric embedding. Then $\Gamma$ is contracting in $\os$ if and only if the orbit map $\Gamma\to\fc$ to the free factor complex is a quasi-isometric embedding.
\end{thm:orbits}
\begin{proof}
The ``if'' direction was essentially obtained by the authors in \cite{DT1}:  Supposing that $\Gamma$ admits an orbit map into $\F$ that is a quasi-isometric embedding, Theorem $5.4$ of \cite{DT1} implies that for each $R\in \os$ the orbit $\Gamma \cdot R$ is $A$--quasiconvex for some $A>0$. This means that any directed geodesic $\gamma\colon\I\to\os$ between orbit points lies in the symmetric $A$--neighborhood of $\Gamma\cdot R$. Since $\Gamma\to\fc$ is a quasi-isometric embedding, it follows easily that the projection $\fproj\circ\gamma\colon\I\to\fc$ is a parameterized quasigeodesic with uniform constants. Therefore $\gamma$ is uniformly strongly contracting by \Cref{th: BF_contract}.

For the ``only if'' direction, suppose that $\Gamma$ is contracting with respect to $R\in \os$ and $D > 0$ and that the assignment $g\mapsto g\cdot R$ defines a $C$--quasi-isometric embedding. Choose $g,h\in \Gamma$ and let $\gamma\colon[a,b]\to \os$ be a $D$--strongly contracting geodesic from $g\cdot R$ to $h\cdot R$. \Cref{prop:left_thick} then ensures $\gamma([a,b])\subset \os_\epsilon$ for some $\epsilon>0$ depending on $D$ and the injectivity radius of $R$, and so \Cref{prop:strong_contract_makes_progress} implies that $\fproj\circ\gamma$ is a $K=K(D,\epsilon)$--quasigeodesic. Since $d_\Gamma(g,h)$ and $d_\fc(g\fproj(R),h\fproj(R))$ both coarsely agree with $d_\os(\gamma(a),\gamma(b)) = d_\os(g\cdot R,h\cdot R)$, there is a constant $E = E(K,C)\ge 1$ such that
\[\frac{1}{E} d_\Gamma(g,h) - E \le \diam_\fc(g\fproj(R),h\fproj(R)) \le E\,d_\Gamma(g,h) + E\]
Thus the assignment $g\mapsto g\cdot A$, where $A\in\fproj(R)$, defines a quasi-isometric embedding $\Gamma\to \fc$.
\end{proof}

\bibliographystyle{alphanum}
\bibliography{strongly_contracting_in_OS}

\bigskip

\noindent
\begin{minipage}{.55\linewidth}
Department of Mathematics\\
Vanderbilt University\\
1326 Stevenson Center\\
Nashville, TN 37240, U.S.A\\
E-mail: {\tt spencer.dowdall@vanderbilt.edu}
\end{minipage}
\begin{minipage}{.45\linewidth}
Department of Mathematics\\ 
Temple University\\ 
1805 North Broad Street\\ 
Philadelphia, PA 19122, U.S.A\\
E-mail: {\tt samuel.taylor@temple.edu}
\end{minipage}

\end{document}